\newtheorem{theorem}{Theorem}
\newtheorem{remark}[theorem]{Remark}
\newtheorem{lemma}[theorem]{Lemma}
\newtheorem{proposition}[theorem]{Proposition}
\newtheorem{definition}[theorem]{Definition}
\newtheorem{example}[theorem]{Example}
\DeclareMathOperator*{\Ss}{S}
\newcommand*\diff{\mathrm{d}}
\newcommand{\N}{\mathbb{N}}
\newcommand{\R}{\mathbb{R}}
\newcommand{\WH}{W^{1, \mathcal{H}}(\Omega)}
\newcommand{\norm}[1]{\|#1\|}
\newcommand{\Lp}[1]{L^{#1}(\Omega)}
\newcommand{\Lprand}[1]{L^{#1}(\partial\Omega)}
\newcommand{\Wp}[1]{W^{1,#1}(\Omega)}
\newcommand{\lan}{\langle}
\newcommand{\ran}{\rangle}
\newcommand{\eps}{\varepsilon}
\newcommand{\into}{\int_{\Omega}}
\newcommand{\intor}{\int_{\partial\Omega}}
\newcommand{\weak}{\rightharpoonup}
\newcommand{\Linf}{L^{\infty}(\Omega)}
\newcommand{\close}{\overline{\Omega}}
\renewcommand{\l}{\left}
\renewcommand{\r}{\right}
\numberwithin{theorem}{section}
\numberwithin{equation}{section}
\title[Superlinear elliptic equations with unbalanced growth]
{Superlinear elliptic equations with unbalanced growth and nonlinear boundary condition}
\author[E. Amoroso]{Eleonora Amoroso}
\address[E. Amoroso]{Department of Engineering, University of Messina, 98166 Messina, Italy}
\email{eleonora.amoroso@unime.it}
\author[\'{A}. Crespo-Blanco]{\'{A}ngel Crespo-Blanco}
\address[\'{A}. Crespo-Blanco]{Technische Universit\"{a}t Berlin, Institut f\"{u}r Mathematik, Stra\ss e des 17.\,Juni 136, 10623 Berlin, Germany}
\email{crespo@math.tu-berlin.de}
\author[P. Pucci]{Patrizia Pucci}
\address[P. Pucci]{Department of Mathematics and Computer Science, University of Perugia, Via Vanvitelli 1, 06123 Perugia, Italy}
\email{patrizia.pucci@unipg.it}
\author[P. Winkert]{Patrick Winkert}
\address[P. Winkert]{Technische Universit\"{a}t Berlin, Institut f\"{u}r Mathematik, Stra\ss e des 17.\,Juni 136, 10623 Berlin, Germany}
\email{winkert@math.tu-berlin.de}
\begin{document}

\begin{abstract}
	In this paper we first introduce an innovative equivalent norm in the Musielak-Orlicz Sobolev spaces in a very general setting and  we then present a new result on the boundedness of the solutions of a wide class of nonlinear Neumann problems, both of independent interest. Moreover, we study a variable exponent double phase problem with a nonlinear boundary condition and prove the existence of multiple solutions under very general assumptions on the nonlinearities. To be more precise, we get constant sign solutions  (nonpositive and nonnegative)  via a mountain-pass  approach and a sign-changing solution by using an appropriate subset of the corresponding Nehari manifold along with the Brouwer degree and the Quantitative Deformation Lemma.
\end{abstract}

\subjclass{35A01, 35J20, 35J25, 35J62}
\keywords{Brouwer degree, constant sign solution, double phase operator, equivalent norm, mountain-pass geometry, Nehari manifold, nodal solution, nonlinear boundary condition, nonstandard growth, quantitative deformation lemma, sign-changing solution, variable exponents}

\maketitle

\section{Introduction}
A differential operator that has found a place in many research fields in recent years is the so-called ``double phase operator", which is defined by
\begin{align*}
	u \mapsto -\operatorname{div}\l(|\nabla u|^{p(x)-2}\nabla u + \mu(x)|\nabla u|^{q(x)-2}\nabla u\r),
\end{align*}
for every function $u$ belonging to a suitable Musielak-Orlicz Sobolev space $\WH$, where $\Omega\subset\R^N$ is supposed to be a bounded domain with Lipschitz boundary $\partial\Omega$. The integral functional related to this operator, given by
\begin{align*}
	\into \l( |\nabla u|^{p(x)} + \mu(x) |\nabla u|^{q(x)} \r) \,\diff x, \quad u \in \WH,
\end{align*}
changes ellipticity in two different phases and has been first introduced in 1986 by Zhikov \cite{Zhikov-1986} with constant exponents. Since then, many authors studied problems involving this operator, which has been used to model different phenomena. Among the topics, we mention first the elasticity theory in which it describes the behavior of strongly anisotropic materials, whose hardening properties are related to the exponents $p(\cdot)$ and $q(\cdot)$ and significantly change with the point and the coefficient $\mu(\cdot)$ determines the geometry of a composite made of two different materials, see Zhikov \cite{Zhikov-2011}. Moreover, other applications can be found in the works of Bahrouni-R\u{a}dulescu-Repov\v{s} \cite{Bahrouni-Radulescu-Repovs-2019} on transonic flows, Benci-D’Avenia-Fortunato-Pisani \cite{Benci-DAvenia-Fortunato-Pisani-2000} on quantum physics and Zhikov \cite{Zhikov-2011} on the Lavrentiev gap phenomenon, the thermistor problem and the duality theory. For a mathematical study of such integral functionals with $(p,q)$-growth we refer to the works of Baroni-Colombo-Mingione \cite{Baroni-Colombo-Mingione-2015, Baroni-Colombo-Mingione-2016, Baroni-Colombo-Mingione-2018}, Colombo-Mingione \cite{Colombo-Mingione-2015a,Colombo-Mingione-2015b}, Cupini-Marcellini-Mascolo \cite{Cupini-Marcellini-Mascolo-2023}, De Filippis-Mingione \cite{DeFilippis-Mingione-2023}, Marcellini \cite{Marcellini-2023,Marcellini-1991,Marcellini-1989b}, Ragusa-Tachikawa \cite{Ragusa-Tachikawa-2020}, see also the papers of Beck-Mingione \cite{Beck-Mingione-2020} and De Filippis-Mingione \cite{DeFilippis-Mingione-2021} for nonautonomous integrals. Furthermore  it should be mentioned that the double phase operator generalizes several other differential operators, for example, the $(p(\cdot),q(\cdot))$- Laplacian when $\inf_{\close} \mu >0$ and the $p(\cdot)$-Laplacian if $\mu \equiv 0$, respectively, both of which have been extensively studied in the literature.

Concerning applications in partial differential equations, the double phase operator arises from the study of general reaction--diffusion equations with nonhomogeneous diffusion and transport aspects. These nonhomogeneous operators have applications in biophysics, plasma physics and chemical reactions, with double phase features, where the function $u$ corresponds to the concentration term, and the differential operator represents the diffusion coefficient.

The weak solutions of related problems are functions belonging to an appropriate Musielak-Orlicz Sobolev space $\WH$, where $\mathcal{H}\colon \Omega \times [0,\infty)\to [0,\infty)$ is a nonlinear function defined by
\begin{align*}
	(x,t) \mapsto t^{p(x)}+\mu(x)t^{q(x)},
\end{align*}
with
\begin{align*}
	1 < p(x) < N, \quad
	p(x) < q(x) < p^*(x) = \frac{Np(x)}{N-p(x)} \quad \text{for all } x \in \close,
\end{align*}
and $0 \le \mu(\cdot) \in \Linf$. The novelties of our paper can be summarized as follows and affect different results of independent interest. First, we prove the existence of a new general equivalent norm on $\WH$ given by
\begin{align}\label{eqivalent-norm-new}
	\begin{split}
		\|u\|_{1,\mathcal{H}}^* = \inf \Bigg\{ \tau>0 \,:\, & \into \l(\l(\frac{|\nabla u|}{\tau}\r)^{p(x)}+\mu(x)\l(\frac{|\nabla u|}{\tau}\r)^{q(x)}\r)\,\diff x \\
		& +\into \vartheta_1(x) \l(\frac{|u|}{\tau}\r)^{\delta_1(x)}\,\diff x\\
		& +\intor\vartheta_2(x)\l(\frac{|u|}{\tau}\r)^{\delta_2(x)}\,\diff \sigma \leq 1  \Bigg\},
	\end{split}
\end{align}
where $0 \le \vartheta_1(\cdot) \in  \Linf, 0 \le \vartheta_2(\cdot) \in L^\infty(\partial\Omega)$ and $\delta_1, \delta_2$ are of class $C(\close)$, satisfying $1\leq \delta_1(x)\leq p^*(x)$ and $1\leq \delta_2(x) \leq p_*(x)$ for all $x\in\close$. For more details we refer to \eqref{H1}. Let us emphasize that in this setting the exponents $\delta_1(\cdot)$ and $\delta_2(\cdot)$ can be also critical, namely they can coincide (at some points or at all ones) with the Sobolev critical exponents $p^*(\cdot)$ and $p_*(\cdot)$, respectively, see \eqref{def_critical_exp} for the definition of them. There is a trade-off for allowing the exponents $\delta_1$ and $\delta_2$ to be equal to the Sobolev critical exponents at some points: it does not suffice that $\delta_1$ and $\delta_2$ are continuous functions, we require that they are log-H\"{o}lder continuous and in $W^{1,\gamma} (\Omega)$ for $\gamma>N$, respectively. The reason is the Sobolev embedding theorem in variable exponent spaces, which requires this extra regularity if you achieve equality with the critical Sobolev exponent. Note that in the constant exponent case there would be no difference. All in all, the norm in \eqref{eqivalent-norm-new} generalizes different known norms in $W^{1,p}(\Omega)$, $W^{1,p(\cdot)}(\Omega)$ or in the Musielak-Orlicz Sobolev space with constant exponents, see Crespo-Blanco-Papageorgiou-Winkert \cite{Crespo-Blanco-Papageorgiou-Winkert-2022}.

In the second part of the paper, we are interested in the boundedness of weak solutions of the following nonlinear Neumann problem
\begin{equation}\label{pr}
	\begin{aligned}
		-\operatorname{div} \mathcal{A}(x,u,\nabla u)
		 & = \mathcal{B}(x,u,\nabla u) \quad &  & \text{in } \Omega, \\
		\mathcal{A}(x,u,\nabla u) \cdot \nu
		 & = \mathcal{C} (x,u) &  & \text{on } \partial\Omega,
	\end{aligned}
\end{equation}
where the right-hand side in $\Omega$ can also depend on the gradient of the solution and $\mathcal{A}, \mathcal{B}$ and $\mathcal{C}$ are Carath\'eodory functions satisfying suitable and general growth conditions presented in \eqref{H_infty}. In particular, any weak solution of \eqref{pr} turns out to be in $\Linf$ and we give in Theorem \ref{bounded-solutions} a priori estimates on its $\Linf$-norm. Such a result can be applied in several other problems involving the variable exponent double phase operator as well as general right-hand sides.

In the last part of this paper our purpose is to prove existence and multiplicity results for a variable exponent double phase problem with nonlinear boundary condition and superlinear nonlinearities. Inspired by the recent work of Crespo-Blanco-Winkert \cite{Crespo-Blanco-Winkert-2022} on a Dirichlet problem, the new equivalent norm that we present in Section \ref{Section:equivalent norm} plays an important role. In particular, given a bounded domain $\Omega\subset \R^N$, $N\geq 2$, with Lipschitz boundary $\partial \Omega$ and denoting with $\nu(x)$ the outer unit normal of $\Omega$ at $x \in \partial\Omega$, we study the following problem
\begin{equation}\label{problem}\tag{$P$}
	\begin{aligned}
		-\operatorname{div} \mathcal{F}(u)+ |u|^{p(x)-2}u &= f(x,u) &  &\text{in } \Omega,\\
		\mathcal{F}(u) \cdot  \nu &= g(x,u) - |u|^{p(x)-2}u     &   & \text{on } \partial\Omega,
	\end{aligned}
\end{equation}
where $\operatorname{div} \mathcal{F}(u)$ is the variable exponent double phase operator given by
\begin{align*}
	\mathcal{F}(u):=|\nabla u|^{p(x)-2}\nabla u+\mu(x) |\nabla u|^{q(x)-2}\nabla u,
\end{align*}
and $f\colon\Omega \times \R\to\R$ as well as $g\colon\partial\Omega \times \R\to\R$ are Carath\'eodory functions which are superlinear with respect to the second argument, see the precise conditions in \eqref{H_{f,g}} and some examples in Example \ref{examples}.

In recent years, many authors have dealt with double phase problems in the constant exponents case, see for instance Biagi-Esposito-Vecchi \cite{Biagi-Esposito-Vecchi-2021}, Colasuonno-Squassina \cite{Colasuonno-Squassina-2016}, Farkas-Winkert \cite{Farkas-Winkert-2021}, Fiscella \cite{Fiscella-2022}, Gasi\'nski-Papageorgiou \cite{Gasinski-Papageorgiou-2021}, Gas\-i\'nski-Winkert \cite{Gasinski-Winkert-2020b, Gasinski-Winkert-2021}, Ge-Pucci \cite{Ge-Pucci-2022}, Liu-Dai \cite{Liu-Dai-2018}, Liu-Papageorgiou \cite{Liu-Papageorgiou-2022}, Papageor\-giou-R\u{a}dulescu-Repov\v{s} \cite{Papageorgiou-Radulescu-Repovs-2019b}, Perera-Squassina \cite{Perera-Squassina-2018}, Pucci \cite{Pucci-2023}, Stegli\'{n}ski \cite{Steglinski-2022}, Zeng-Bai-Gasi\'n\-ski-Winkert \cite{Zeng-Bai-Gasinski-Winkert-2020} and the references therein.

On the other hand, there are much fewer results for the variable exponents case, see Amoroso-Bonanno-D'Agu\`\i-Winkert \cite{Amoroso-Bonanno-DAgui-Winkert}, Bahrouni-R\u{a}dulescu-Winkert \cite{Bahrouni-Radulescu-Winkert-2020}, Crespo-Blanco-Gasi\'nski-Harjulehto-Winkert \cite{Crespo-Blanco-Gasinski-Harjulehto-Winkert-2022}, Crespo-Blanco-Winkert \cite{Crespo-Blanco-Winkert-2022}, Leo\-nardi-Papageorgiou \cite{Leonardi-Papageorgiou-2023}, Liu-Pucci \cite{Liu-Pucci-2023},  Kim-Kim-Oh-Zeng \cite{Kim-Kim-Oh-Zeng-2022}, Ragusa-Tachikawa \cite{Ragusa-Tachikawa-2020}, Vetro-Winkert \cite{Vetro-Winkert-2023} and Zeng-R\u{a}dulescu-Winkert \cite{Zeng-Radulescu-Winkert-2022}.

As mentioned before, we present existence and multiplicity results for problem \eqref{problem} by using critical point theory and the Nehari manifold approach, that is, we are able to provide the existence of three bounded weak solutions of problem \eqref{problem} with precise information on the sign. Indeed, through a mountain-pass approach we obtain the existence of two solutions with constant sign. In addition, through the Nehari manifold method along with the Quantitative Deformation Lemma and the Brouwer degree we establish the existence of a sign-changing solution, that turns out to have exactly two nodal domains. We emphasize that we do not require a monotonicity condition on the exponent $p(\cdot)$ as it was needed in the work of Crespo-Blanco-Winkert \cite[hypothesis (H1)]{Crespo-Blanco-Winkert-2022}, since we do not need Poincar\`e's inequality for the modular function related to the norm. Moreover, as far as we know, the growth assumption on the boundary $\partial\Omega$ stated in \eqref{h_4} is new and optimal for this treatment.

The paper is organized as follows. In Section \ref{Section:preliminaries} we recall the definitions and some pro\-per\-ties of the Lebesgue and Sobolev spaces with variable exponents and of the Musielak-Orlicz Sobolev spaces as well as the main tools needed in our treatment, such as the  Mountain-Pass Theorem  (Theorem \ref{mountain_pass_theorem}) and the Quantitative Deformation Lemma (Lemma \ref{quantitative_deformation_lemma}).
In Section~\ref{Section:equivalent norm} we present the proof of a new equivalent norm in the Musielak-Orlicz Sobolev space and we give some properties related to the corresponding modular and the operator. In Section \ref{Section:bounded solutions} we provide a result on the boundedness of the weak solutions of a more general problem than \eqref{problem}, giving also in
Theorem \ref{bounded-solutions} a priori estimates on the $\Linf$-norm of the weak solutions. Then, in Section \ref{Section:constant sign} we state the assumptions on the nonlinearities $f$ and $g$ and in Theorem \ref{theorem_2_sol}
we prove the existence of two constant sign solutions, in particular one is nonnegative and the other one is nonpositive. After this, in Section \ref{Section:sign changing} we state  Theorem \ref{theorem_3_sol} concerning the existence of a third solution, which is sign-changing, obtained minimizing the energy functional related to our problem in a suitable Nehari manifold subset. Finally, Theorem \ref{theorem_nodal_domains} gives information on the nodal domains of this sign-changing solution.

\section{Preliminaries}\label{Section:preliminaries}
For any $1\leq r\leq \infty$, $\Lp{r}$ indicates the usual Lebesgue spaces equipped with the norm $\|\cdot\|_r$ and for $1\leq r < \infty$, $\Wp{r}$ denotes the Sobolev space endowed with the usual norm $\|\cdot \|_{1,r}$. First, we introduce the Lebesgue and Sobolev spaces with variable exponents and some properties that will be useful in our treatment. For a detailed overview we refer to the book of Diening-Harjulehto-H\"{a}st\"{o}-R$\mathring{\text{u}}$\v{z}i\v{c}ka \cite{Diening-Harjulehto-Hasto-Ruzicka-2011}. For any $r \in C(\close)$, we set
\begin{align*}
	r_+:=\max\limits_{x \in \close} r(x)
	\quad \text{and} \quad
	r_-:=\min\limits_{x \in \close} r(x),
\end{align*}
and define
\begin{align*}
	C_+(\close)=\{ r \in C(\close)\,:\, r_->1\}.
\end{align*}
Denoting by $M(\Omega)$ the space of all measurable functions $u\colon \Omega\to\R$, we define for any $r \in C_+(\close)$ the Lebesgue space with variable exponent by
\begin{align*}
	\Lp{r(\cdot)} = \{ u \in M(\Omega) \, : \,\rho_{r(\cdot)}(u)< \infty \},
\end{align*}
where the modular is given by
\begin{align*}
	\rho_{r(\cdot)}(u) = \into |u|^{r(x)} \,\diff x,
\end{align*}
endowed with the Luxemburg norm
\begin{align*}
	\norm{u}_{r(\cdot)} = \inf \l\{ \tau >0 \,:\, \rho_{r(\cdot)}\l(\frac{u}{\tau}\r)\le1 \r\}.
\end{align*}

Here, we recall the relation between the norm and the modular, see Fan-Zhao \cite[Theorems 1.2 and 1.3]{Fan-Zhao-2001}.

\begin{proposition}
	\label{properties_norm_modular_r}
	Let $r \in C_+(\close)$, $u \in \Lp{r(\cdot)}$ and $\lambda\in\R$. Then the following hold:
	\begin{enumerate}
		\item[\textnormal{(i)}]
			If $u\neq 0$, then $\|u\|_{r(\cdot)}=\lambda \quad \Longleftrightarrow \quad \rho_{r(\cdot)}(\frac{u}{\lambda})=1$;
		\item[\textnormal{(ii)}]
			$\|u\|_{r(\cdot)}<1$ (resp.\,$>1$, $=1$) $\quad \Longleftrightarrow \quad \rho_{r(\cdot)}(u)<1$ (resp.\,$>1$, $=1$);
		\item[\textnormal{(iii)}]
			If $\|u\|_{r(\cdot)}<1 \quad \Longrightarrow \quad \|u\|_{r(\cdot)}^{r_+}\leq \rho_{r(\cdot)}(u)\leq\|u\|_{r(\cdot)}^{r-}$;
		\item[\textnormal{(iv)}]
			If $\|u\|_{r(\cdot)}>1 \quad \Longrightarrow \quad \|u\|_{r(\cdot)}^{r_-}\leq \rho_{r(\cdot)}(u)\leq\|u\|_{r(\cdot)}^{r_+}$;
		\item[\textnormal{(v)}]
			$\|u\|_{r(\cdot)}\to 0 \quad \Longleftrightarrow \quad  \rho_{r(\cdot)}(u)\to 0$;
		\item[\textnormal{(vi)}]
			$\|u\|_{r(\cdot)}\to 1 \quad \Longleftrightarrow \quad  \rho_{r(\cdot)}(u)\to 1$;
		\item[\textnormal{(vii)}]
			$\|u\|_{r(\cdot)} \to +\infty \quad \Longleftrightarrow \quad  \rho_{r(\cdot)}(u)\to +\infty$;
		\item[\textnormal{(viii)}]
			$u_n \to u$ in $\Lp{r(\cdot)} \quad \Longrightarrow \quad \rho_{r(\cdot)}(u_n)\to \rho(u)$.
	\end{enumerate}
\end{proposition}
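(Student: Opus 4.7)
The plan is to exploit the monotonicity, continuity, and limiting behavior of the auxiliary function $\varphi_u(\tau) := \rho_{r(\cdot)}(u/\tau)$ on $(0,\infty)$, and then extract every statement from carefully chosen scalings. For $u \in \Lp{r(\cdot)}$ with $u \neq 0$, a dominated-convergence argument (using $|u|^{r_-} + |u|^{r_+} \in L^1(\Omega)$ as a dominating function on bounded $\tau$-intervals) shows that $\varphi_u$ is continuous, strictly decreasing, satisfies $\varphi_u(\tau) \to 0$ as $\tau \to \infty$, and $\varphi_u(\tau) \to \infty$ as $\tau \to 0^+$. Hence the infimum defining $\|u\|_{r(\cdot)}$ is attained at the unique $\tau_0$ with $\varphi_u(\tau_0) = 1$, which gives (i); item (ii) then follows by comparing $\lambda = \|u\|_{r(\cdot)}$ with $1$ using strict monotonicity (for $u=0$ all statements are trivial).

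For the modular--norm inequalities (iii) and (iv), the plan is to set $\lambda = \|u\|_{r(\cdot)}$ and use (i) to write $\int_\Omega |u(x)|^{r(x)} \lambda^{-r(x)} \,\diff x = 1$. When $\lambda < 1$, the pointwise sandwich $\lambda^{r_+} \leq \lambda^{r(x)} \leq \lambda^{r_-}$ yields $\lambda^{r_+} \leq \rho_{r(\cdot)}(u) \leq \lambda^{r_-}$; when $\lambda > 1$, the inequalities reverse to give (iv). Items (v), (vi), and (vii) then drop out as direct consequences: taking $u_n$ with $\|u_n\|_{r(\cdot)} \to 0$ (eventually $< 1$) and applying (iii) forces $\rho_{r(\cdot)}(u_n) \to 0$, and conversely if $\rho_{r(\cdot)}(u_n) \to 0$ one first shows $\|u_n\|_{r(\cdot)}$ is bounded (if not, extract a subsequence $>1$ and use (iv) for a contradiction) and then applies (iii) again; (vi) and (vii) are handled analogously.

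The remaining item (viii), continuity of the modular along strongly convergent sequences, is the step I expect to be the subtlest because (v)--(vii) do not already give control of $\rho_{r(\cdot)}(u_n)$ around nonzero limits. My plan is to use the elementary inequality
\begin{equation*}
	\bigl| |a|^{r(x)} - |b|^{r(x)} \bigr| \leq r_+ \bigl(|a|^{r(x)-1} + |b|^{r(x)-1}\bigr) |a - b|,
\end{equation*}
integrate, and apply H\"{o}lder's inequality in variable exponent form with the conjugate exponent $r'(x) = r(x)/(r(x)-1)$ to obtain
\begin{equation*}
	|\rho_{r(\cdot)}(u_n) - \rho_{r(\cdot)}(u)| \leq C \bigl\| |u_n|^{r(\cdot)-1} + |u|^{r(\cdot)-1} \bigr\|_{r'(\cdot)} \|u_n - u\|_{r(\cdot)}.
\end{equation*}
The first factor is bounded because $u_n \to u$ in $\Lp{r(\cdot)}$ implies $\||u_n|^{r(\cdot)-1}\|_{r'(\cdot)}$ is uniformly bounded (via (iii)--(iv) applied to $|u_n|^{r(\cdot)-1}$, whose modular in $r'(\cdot)$ equals $\rho_{r(\cdot)}(u_n)$), while the second tends to zero by hypothesis. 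This yields (viii) and closes the proof.
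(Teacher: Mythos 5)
The paper does not prove this result --- it is recalled with a citation to Fan--Zhao [Theorems 1.2 and 1.3] --- so there is no proof in the paper to compare against; your attempt must be judged on its own. The overall plan (studying the monotone continuous function $\varphi_u(\tau)=\rho_{r(\cdot)}(u/\tau)$, reading off (i)--(ii), then deriving (iii)--(iv) by pulling out the scalar and (v)--(vii) from them, and proving (viii) via a mean-value inequality plus H\"older) is the standard one and is essentially correct.

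One concrete flaw: the dominating function you invoke, $|u|^{r_-}+|u|^{r_+}$, need not lie in $L^1(\Omega)$ for $u \in L^{r(\cdot)}(\Omega)$. For instance on $\Omega=(0,1)$ with $r(x)=1+x$ and $u(x)=x^{-3/4}$ one has $\rho_{r(\cdot)}(u)=\int_0^1 x^{-3(1+x)/4}\,\diff x<\infty$, yet $\int_0^1 |u|^{r_+}\,\diff x=\int_0^1 x^{-3/2}\,\diff x=\infty$. The fix is easy and you should state it precisely: on any $\tau$-interval $[\varepsilon,\infty)$ with $\varepsilon>0$ one has the pointwise bound $|u/\tau|^{r(x)} \leq \max(1,\varepsilon^{-r_+})\,|u|^{r(x)}$, and $|u|^{r(\cdot)}\in L^1(\Omega)$ by the very definition of $L^{r(\cdot)}(\Omega)$, so dominated convergence is available on every interval bounded away from zero --- which is exactly what your continuity, decrease-to-$0$, and blow-up-near-$0$ claims require. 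With that repair, (i)--(vii) follow as you describe, and your argument for (viii) --- in particular the identity $(r(x)-1)r'(x)=r(x)$, which identifies $\rho_{r'(\cdot)}\bigl(|u_n|^{r(\cdot)-1}\bigr)$ with $\rho_{r(\cdot)}(u_n)$ and hence yields a uniform bound on $\bigl\||u_n|^{r(\cdot)-1}\bigr\|_{r'(\cdot)}$ via (iii)--(iv) --- is sound.
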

For $r' \in C_+(\close)$ being the conjugate variable exponent to $r$, that is,
\begin{align*}
	\frac{1}{r(x)}+\frac{1}{r'(x)}=1 \quad\text{for all }x\in\close,
\end{align*}
it is clear that $\Lp{r(\cdot)}^*=\Lp{r'(\cdot)}$ and the following H\"older's inequality hold
\begin{align*}
	\|uv\|_1\leq 2 \|u\|_{r(\cdot)}\|v\|_{r'(\cdot)},
\end{align*}
for all $u\in \Lp{r(\cdot)}$ and for all $v \in \Lp{r'(\cdot)}$, see Diening-Harjulehto-H\"{a}st\"{o}-R$\mathring{\text{u}}$\v{z}i\v{c}ka \cite[Lemma 3.2.20]{Diening-Harjulehto-Hasto-Ruzicka-2011}.

Furthermore, for $r_1, r_2\in C_+(\close)$ with $r_1(x) \leq r_2(x)$ for all $x\in \close$, we have the continuous embedding
\begin{align*}
	\Lp{r_2(\cdot)} \hookrightarrow \Lp{r_1(\cdot)}.
\end{align*}

Moreover, we can define variable exponent Lebesgue spaces with weights: for any $\omega \in \Lp{1}$, $\omega \geq 0$, we can define the modular
\begin{align*}
	\rho_{r(\cdot),\omega} (u) = \into \omega(x) |u|^{r(x)}\,\diff x.
\end{align*}
Then, we define the space
\begin{align*}
	L^{r(\cdot)}_\omega(\Omega)=\l\{u \in M(\Omega)\,:\, \into \rho_{r(\cdot),\omega} (u)\, \diff x<\infty \r\},
\end{align*}
endowed with the corresponding Luxemburg norm
\begin{align*}
	\|u\|_{r(\cdot),\omega} =\inf \l \{\lambda>0 \, : \, \rho_{r(\cdot),\omega} \left( \frac{u}{\lambda} \right)  \leq 1 \r\}.
\end{align*}

Next we can define the corresponding variable exponent Sobolev space $\Wp{r(\cdot)}$ which is given by, for $r \in C_+(\close)$,
\begin{align*}
	\Wp{r(\cdot)}=\l\{ u \in \Lp{r(\cdot)} \,:\, |\nabla u| \in \Lp{r(\cdot)}\r\},
\end{align*}
equipped with the norm
\begin{align*}
	\|u\|_{1,r(\cdot)}=\|u\|_{r(\cdot)}+\|\nabla u\|_{r(\cdot)},
\end{align*}
where $\|\nabla u\|_{r(\cdot)}= \|\,|\nabla u|\,\|_{r(\cdot)}$. It is well known that $\Lp{r(\cdot)}$ and $\Wp{r(\cdot)}$ are separable and reflexive Banach spaces and possess an equivalent, uniformly convex norm, see for example Diening-Harjulehto-H\"ast\"o-R$\mathring{\text{u}}$\v{z}i\v{c}ka \cite{Diening-Harjulehto-Hasto-Ruzicka-2011}.

For any $r \in C_+(\close)$ with $r_+ < N$, we denote by $r^*$ and $r_*$ the critical Sobolev exponents, defined for all $x \in \close$ as follows
\begin{align}\label{def_critical_exp}
	r^*(x) = \frac{Nr(x)}{N-r(x)}
	\quad\text{and}\quad
	r_*(x) = \frac{(N-1)r(x)}{N-r(x)}.
\end{align}
Furthermore, let $\sigma$ be the $(N-1)$-dimensional Hausdorff measure on the boundary $\partial\Omega$ and indicate with $L^{r(\cdot)}(\partial\Omega)$ the boundary Lebesgue space endowed with the usual norm $\norm{\cdot}_{r,\partial\Omega}$. We can consider a trace operator, i.e., a continuous linear operator $\gamma\colon\Wp{r(\cdot)} \to L^{m(\cdot)}(\partial\Omega)$ for all $m \in C(\close)$ with $1 \le m(x) < r_*(x)$ for every $x \in \close$, such that
\begin{align*}
	\gamma(u) = u \vert_{\partial\Omega} \quad \text{for all } u \in \Wp{r(\cdot)} \cap C(\close).
\end{align*}
If it also holds that $r \in W^{1,\gamma} (\Omega)$ with $\gamma > N$, then we can take any $m \in C(\close)$ with $1 \le m(x) \leq r_*(x)$ for every $x \in \close$.
By the trace embedding theorem, it is known that $\gamma$ is compact for any $r \in C(\close)$ with $1 \le r(x) < r_*(x)$ for all $x \in \close$, see Fan \cite[Corollary 2.4]{Fan-2008}. In this paper we avoid the notation of the trace operator and we consider all the restrictions of Sobolev functions to the boundary $\partial\Omega$ in the sense of traces. Moreover, we indicate with $\rho_{r(\cdot),\partial\Omega}(\cdot)$ and $\norm{\cdot}_{r(\cdot),\partial\Omega}$ the modular and the norm, respectively, of the space $L^{r(\cdot)}(\partial\Omega)$ with exponent $r(\cdot)$ on the boundary $\partial\Omega$.

By $C^{0, \frac{1}{|\log t|}}(\close)$ we denote the set of all functions $h\colon \close \to \R$ that are log-H\"older continuous, that is, there exists a constant $C>0$ such that
\begin{align*}
	|h(x)-h(y)| \leq \frac{C}{|\log |x-y||}\quad\text{for all } x,y\in \close \text{ with } |x-y|<\frac{1}{2}.
\end{align*}
Next, we present some embedding results, see Diening-Harjulehto-H\"{a}st\"{o}-R$\mathring{\text{u}}$\v{z}i\v{c}ka \cite[Corollary 8.3.2]{Diening-Harjulehto-Hasto-Ruzicka-2011}, Fan \cite[Corollary 2.4]{Fan-2008}, Fan \cite[Propositions 2.1 and 2.2]{Fan-2010}, Fan-Shen-Zhao \cite{Fan-Shen-Zhao-2001} and Ho-Kim-Winkert-Zhang \cite[Proposition 2.5]{Ho-Kim-Winkert-Zhang-2022}.

\begin{proposition}
	~
	\begin{enumerate}
		\item[\textnormal{(i)}]
			Let $r\in C^{0, \frac{1}{|\log t|}}(\close) \cap C_+(\close)$ and let $s\in C(\close)$ be such that $1\leq  s(x)\leq r^*(x)$ for all $x\in\close$. Then, the embedding $W^{1,r(\cdot)}(\Omega) \hookrightarrow L^{s(\cdot) }(\Omega)$ is  continuous. If $r\in C_+(\close)$, $s\in C(\close)$ and $1\leq s(x)< r^*(x)$ for all
			$x\in\overline{\Omega}$, then the embedding above is compact.
		\item[\textnormal{(ii)}]
			Suppose that $r\in C_+(\close)\cap W^{1,\gamma}(\Omega)$ for some $\gamma>N$ and let $s\in C(\close)$ be such that $1\leq  s(x)\leq r_*(x)$ for all $x\in\close$. Then, the embedding $W^{1,r(\cdot)}(\Omega)\hookrightarrow L^{s(\cdot) }(\partial \Omega)$ is continuous. If $r\in C_+(\close)$, $s\in C(\close)$ and $1\leq s(x)< r_*(x)$ for all $x\in\overline{\Omega}$, then the embedding above is compact.
	\end{enumerate}
\end{proposition}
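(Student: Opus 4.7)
The plan is to reduce both parts to the critical Sobolev and trace embeddings and then invoke the standard continuous inclusion between variable exponent Lebesgue spaces on sets of finite measure. Concretely, for (i) I would first establish the critical case $W^{1,r(\cdot)}(\Omega) \hookrightarrow L^{r^*(\cdot)}(\Omega)$. Log-H\"older continuity of $r(\cdot)$ is essential here: it permits the classical extension-and-convolution (or Riesz potential) argument to produce a Sobolev inequality pointwise in the modular, as carried out in Diening-Harjulehto-H\"ast\"o-R$\mathring{\text{u}}$\v{z}i\v{c}ka. Once this critical embedding is available, the inclusion for a general $s$ with $1\leq s(x)\leq r^*(x)$ follows from the continuous embedding $L^{r^*(\cdot)}(\Omega)\hookrightarrow L^{s(\cdot)}(\Omega)$ on the bounded domain $\Omega$, a standard consequence of H\"older's inequality with variable exponents.

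For the compact embedding under strict inequality $s(x) < r^*(x)$, I would use an interpolation argument. Compactness of $\close$ and continuity of $s$ and $r^*$ let one choose a continuous $\bar s$ with $s(x)<\bar s(x)<r^*(x)$ everywhere, so that $W^{1,r(\cdot)}(\Omega)\hookrightarrow L^{\bar s(\cdot)}(\Omega)$ continuously by the previous step. Compactness into $L^{s(\cdot)}(\Omega)$ is then extracted from the classical Rellich-Kondrachov theorem, applied with the constant exponents $s_-$ and $s_+$, and upgraded to the variable-exponent norm via the modular-norm identities of Proposition \ref{properties_norm_modular_r} (items (iii)-(viii)), which let one convert $L^{s_\pm}$-convergence of a bounded sequence into $L^{s(\cdot)}$-convergence. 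This is essentially the route taken in Fan-Shen-Zhao and Ho-Kim-Winkert-Zhang.

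Part (ii) is treated analogously on $\partial\Omega$. The stronger assumption $r\in W^{1,\gamma}(\Omega)$ with $\gamma>N$ gives, via Morrey's embedding, that $r$ admits a H\"older continuous trace on the boundary, which is precisely the regularity needed to build the variable-exponent trace operator and to transfer the critical trace inequality to $W^{1,r(\cdot)}(\Omega)\hookrightarrow L^{r_*(\cdot)}(\partial\Omega)$, as in Fan. The general continuous and compact statements then follow by the same inclusion-plus-interpolation pattern as in (i), now working with the boundary modular $\rho_{r(\cdot),\partial\Omega}$ and using the Rellich-Kondrachov compact trace theorem with constant exponents $s_-$, $s_+$.

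The main technical hurdle in both parts is the critical embedding itself: one must control the modular of $u$ on $\Omega$ (respectively on $\partial\Omega$) by that of $|\nabla u|$ with constants that do not degenerate as the exponent varies with $x$. This is precisely where the log-H\"older continuity (in the interior) or the $W^{1,\gamma}$ regularity with $\gamma>N$ (at the boundary) enter in an essential way, since without them the variable exponent Sobolev inequality can fail. Everything else reduces to bookkeeping with the norm-modular correspondence of Proposition \ref{properties_norm_modular_r} and H\"older's inequality with variable exponents.
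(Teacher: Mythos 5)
The paper does not actually prove this proposition; it is stated as a recollection of known facts, with proofs delegated to the cited references (Diening–Harjulehto–H\"ast\"o–R\r{u}\v{z}i\v{c}ka, Fan, Fan–Shen–Zhao, Ho–Kim–Winkert–Zhang). Your sketch of the continuous embeddings is consistent with those sources: establish the critical case $W^{1,r(\cdot)}(\Omega)\hookrightarrow L^{r^*(\cdot)}(\Omega)$ (resp.\ the trace case) by extension and convolution/Riesz potential estimates, where log-H\"older (resp.\ $W^{1,\gamma}$, $\gamma>N$, via Morrey) controls the exponent, and then compose with the inclusion $L^{r^*(\cdot)}(\Omega)\hookrightarrow L^{s(\cdot)}(\Omega)$ on the bounded domain.

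Your compactness argument, however, has a genuine gap, and in two respects. First, in the compact statement of part (i) the hypothesis is only $r\in C_+(\close)$, \emph{not} log-H\"older continuity; your route picks $\bar s$ with $s<\bar s<r^*$ and invokes the continuous embedding into $L^{\bar s(\cdot)}(\Omega)$ ``by the previous step,'' but the previous step needs log-H\"older, which is not available here. Second, the claim that compactness then ``follows from Rellich--Kondrachov applied with the constant exponents $s_-$ and $s_+$'' does not hold: one need not have $s_+ < (r_-)^*$. For instance, if $r$ transitions from a small value on one part of $\Omega$ to a large value on another and $s$ is chosen close to $r^*$ where $r$ is large, then $s_+$ can exceed $(r_-)^*$, so there is no constant-exponent compact embedding $W^{1,r_-}(\Omega)\hookrightarrow\hookrightarrow L^{s_+}(\Omega)$ to invoke. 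What actually makes the compactness work with only $r\in C_+(\close)$ is a localization argument: cover $\close$ by finitely many small balls $B_i$ on which the oscillation of $r$ and $s$ is so small that $\sup_{B_i}s < (\inf_{B_i} r)^*$, apply the constant-exponent Rellich--Kondrachov theorem on each $B_i$, and glue with a partition of unity (this is the route of Fan--Shen--Zhao). Alternatively, under the stronger log-H\"older hypothesis one can use boundedness in $L^{r^*(\cdot)}(\Omega)$ together with convergence in measure to get uniform integrability of $|u_n-u|^{s(\cdot)}$ and conclude via Vitali; but that argument is not available with only $r\in C_+(\close)$. The same remark applies verbatim to the boundary case in part (ii).

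Finally, a small point: the norm–modular dictionary of Proposition~\ref{properties_norm_modular_r} does let you pass from $\rho_{s(\cdot)}(u_n-u)\to 0$ to $\|u_n-u\|_{s(\cdot)}\to 0$, but it does not by itself convert $L^{s_\pm}$-convergence into decay of the variable-exponent modular; you still need one of the arguments above to control $\rho_{s(\cdot)}(u_n-u)$.
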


\begin{remark}
	\label{remark-spaces}
	Note that for a bounded domain $\Omega\subset \R^N$ and $\gamma>N$ we have the following inclusions
	\begin{align*}
		C^{0,1}(\close)\subset \Wp{\gamma}\subset C^{0,1-\frac{N}{\gamma}}(\close) \subset C^{0, \frac{1}{|\log t|}}(\close).
	\end{align*}
\end{remark}

Now, we introduce the Musielak-Orlicz space, the Musielak-Orlicz Sobolev space and we  recall some properties that will be useful in the sequel. From now on, we assume the following:
\begin{enumerate}[label=\textnormal{(H)},ref=\textnormal{H}]
	\item\label{H}
	$p,q \in C(\close)$ such that $1<p(x)<N$ and $p(x)<q(x)<p^*(x)$ for all $x \in \close$ and $\mu\in L^\infty(\Omega)$ with $\mu(x)\geq0$ for a.a.\,$x\in\Omega$.
\end{enumerate}
We consider the nonlinear function $\mathcal{H}\colon \Omega \times [0,\infty)\to [0,\infty)$ defined by
\begin{align*}
	\mathcal H(x,t)= t^{p(x)}+\mu(x)t^{q(x)} \quad \text{for all } (x,t) \in \Omega \times [0,\infty),
\end{align*}
and we denote by $\rho_{\mathcal{H}}(\cdot)$ the corresponding modular, namely
\begin{align*}
	\rho_{\mathcal{H}}(u) = \into \mathcal{H}(x,|u|)\,\diff x
	= \into \left( |u|^{p(x)}+\mu(x)|u|^{q(x)}\right)\, \diff x.
\end{align*}
Then, we indicate with $L^\mathcal{H}(\Omega)$ the Musielak-Orlicz space, given by
\begin{align*}
	L^\mathcal{H}(\Omega)=\l\{u\in M(\Omega)\, : \, \rho_{\mathcal{H}}(u)<+\infty \r\},
\end{align*}
endowed with the Luxemburg norm
\begin{align*}
	\|u\|_{\mathcal{H}} = \inf \l\{ \tau >0 \,:\, \rho_{\mathcal{H}}\l(\frac{u}{\tau}\r) \leq 1  \r\}.
\end{align*}
Let $\WH$ be the Musielak-Orlicz Sobolev space, defined by
\begin{align*}
	\WH= \l\{u \in \Lp{\mathcal{H}} \,:\, |\nabla u| \in \Lp{\mathcal{H}} \r\},
\end{align*}
equipped with the usual norm
\begin{align*}
	\|u\|_{1,\mathcal{H}}= \|\nabla u \|_{\mathcal{H}}+\|u\|_{\mathcal{H}},
\end{align*}
where $\|\nabla u\|_\mathcal{H}=\|\,|\nabla u|\,\|_{\mathcal{H}}$. From Crespo-Blanco-Gasi\'nski-Harjulehto-Winkert \cite[Proposition 2.12]{Crespo-Blanco-Gasinski-Harjulehto-Winkert-2022} we know that $\Lp{\mathcal{H}}$ and $\Wp{\mathcal{H}}$ are reflexive Banach spaces. Further, we introduce the seminormed space
\begin{align*}
	L^{q(\cdot)}_\mu(\Omega)=\left \{u\in M(\Omega) \,:\, \into \mu(x) | u|^{q(x)} \,\diff x< +\infty \right \},
\end{align*}
and endow it with the seminorm
\begin{align*}
	\|u\|_{q(\cdot),\mu} =\inf \left \{ \tau >0 \,:\, \into \mu(x) \l(\frac{|u|}{\tau}\r)^{q(x)} \,\diff x  \leq 1  \right \}.
\end{align*}
The following result about the main embeddings of $\WH$ can be found in Crespo-Blanco-Gasi\'nski-Harjulehto-Winkert \cite[Propositions 2.16 and 2.18]{Crespo-Blanco-Gasinski-Harjulehto-Winkert-2022}.

\begin{proposition}
	\label{proposition_embeddings}
	Let \eqref{H} be satisfied. Then the following embeddings hold:
	\begin{enumerate}
		\item[\textnormal{(i)}]
			$\Lp{\mathcal{H}} \hookrightarrow \Lp{r(\cdot)}$ and $\WH \hookrightarrow \Wp{r(\cdot)}$ are continuous for all $r \in C(\close)$ with $1 \le r(x) \le p(x)$ for all $x \in \close$;
		\item[\textnormal{(ii)}]
			if $p \in C_+(\close)\cap C^{0,\frac{1}{|\log t|}}(\close)$, then $\WH \hookrightarrow \Lp{r(\cdot)}$ is continuous for $r \in C(\close)$ with $1 \le r(x) \le p^*(x)$ for all $x \in \close$;
		\item[\textnormal{(iii)}]
			$\WH \hookrightarrow \Lp{r(\cdot)}$ is compact for all $r \in C(\close)$ with $1 \le r(x) < p^*(x)$ for all $x \in \close$.
		\item[\textnormal{(iv)}]
			if $p \in C_+(\close)\cap W^{1,\gamma}(\Omega)$ for some $\gamma > N$, then $\WH \hookrightarrow L^{r(\cdot)}(\partial \Omega)$ is continuous for $r \in C(\close)$ with $1 \le r(x) \le p_*(x)$ for all $x \in \close$;
		\item[\textnormal{(v)}]
			$\WH \hookrightarrow L^{r(\cdot)}(\partial \Omega)$ is compact for $r \in C(\close)$ with $1 \le r(x) < p_*(x)$ for all $x \in \close$;
		\item[\textnormal{(vi)}]
			$\Lp{\mathcal{H}} \hookrightarrow L^{q(\cdot)}_\mu(\Omega)$ is continuous;
		\item[\textnormal{(vii)}]
			$L^{q(\cdot)}(\Omega) \hookrightarrow \Lp{\mathcal{H}}$ is continuous;
		\item[\textnormal{(viii)}]
			$\Wp{\mathcal{H}}\hookrightarrow \Lp{\mathcal{H}}$ is compact.
	\end{enumerate}
\end{proposition}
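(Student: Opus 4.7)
Plan: A common thread runs through the proof. The pointwise bounds $t^{p(x)} \leq \mathcal{H}(x,t)$ and $\mu(x) t^{q(x)} \leq \mathcal{H}(x,t)$ for every $(x,t) \in \Omega \times [0,\infty)$ translate into the modular inequalities $\rho_{p(\cdot)}(u) \leq \rho_{\mathcal{H}}(u)$ and $\into \mu(x) |u|^{q(x)} \diff x \leq \rho_{\mathcal{H}}(u)$. By the very definition of the Luxemburg norm these transfer to $\|u\|_{p(\cdot)} \leq \|u\|_{\mathcal{H}}$ and $\|u\|_{q(\cdot),\mu} \leq \|u\|_{\mathcal{H}}$. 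Item (vi) is then immediate, and applying the first bound to both $u$ and $\nabla u$ produces the continuous embedding $\WH \hookrightarrow \Wp{p(\cdot)}$; chaining with the elementary inclusion $\Lp{p(\cdot)} \hookrightarrow \Lp{r(\cdot)}$, valid on the bounded domain $\Omega$ whenever $r(x) \leq p(x)$ on $\close$, completes (i).

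For (vii), fix $u \in \Lp{q(\cdot)}$. Because $p \leq q$ on $\close$ and $|\Omega|<\infty$, one has $u \in \Lp{p(\cdot)}$ with $\|u\|_{p(\cdot)} \leq C_1 \|u\|_{q(\cdot)}$; combined with $\mu \in \Linf$ this gives, for every $\tau >0$,
\[
\rho_{\mathcal{H}}(u/\tau) \leq \rho_{p(\cdot)}(u/\tau) + \|\mu\|_\infty \rho_{q(\cdot)}(u/\tau).
\]
Choosing $\tau = C \|u\|_{q(\cdot)}$ with $C$ large enough, Proposition \ref{properties_norm_modular_r}(iii) shows that each term on the right is controlled by a negative power of $C$, so that the right-hand side is $\leq 1$. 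This furnishes a constant $C = C(p,q,\|\mu\|_\infty,|\Omega|)$ with $\|u\|_{\mathcal{H}} \leq C \|u\|_{q(\cdot)}$.

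Items (ii)--(v) follow by chaining $\WH \hookrightarrow \Wp{p(\cdot)}$ from (i) with the variable-exponent Sobolev embeddings recalled earlier in the section. Statement (ii) invokes the critical continuous embedding $\Wp{p(\cdot)} \hookrightarrow \Lp{r(\cdot)}$ with $r(x)\leq p^*(x)$, which requires the log-Hölder regularity of $p(\cdot)$; (iii) uses the strictly subcritical compact version, which needs only continuity of $p(\cdot)$. Items (iv) and (v) are the corresponding continuous and compact trace assertions: the hypothesis $p\in \Wp{\gamma}$ with $\gamma>N$ gives, through Remark \ref{remark-spaces}, the log-Hölder regularity needed for continuity up to the critical trace exponent $p_*(\cdot)$, while for strictly subcritical $r$ continuity of $p(\cdot)$ suffices.

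The genuine obstacle is (viii), the compactness of $\WH \hookrightarrow \Lp{\mathcal{H}}$, which cannot be extracted from $\Lp{p(\cdot)}$ alone because of the weighted $\mu$-term. My plan is: take a bounded sequence $\{u_n\}\subset \WH$; by reflexivity pass to a subsequence with $u_n \weak u$ in $\WH$. Since assumption \eqref{H} gives $q(x)<p^*(x)$ on $\close$, applying (iii) with $r=p$ and with $r=q$ yields $u_n \to u$ in both $\Lp{p(\cdot)}$ and $\Lp{q(\cdot)}$, and the splitting above gives
\[
\rho_{\mathcal{H}}(u_n-u) \leq \rho_{p(\cdot)}(u_n-u) + \|\mu\|_\infty \rho_{q(\cdot)}(u_n-u) \to 0.
\]
The Musielak-Orlicz analogue of Proposition \ref{properties_norm_modular_r}(v), which is valid here because $\mathcal{H}$ satisfies the $\Delta_2$-condition on account of $p_+,q_+ < \infty$, then converts this modular convergence into norm convergence $u_n \to u$ in $\Lp{\mathcal{H}}$. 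The subtlety worth tracking is exactly this final step, since the modular $\rho_{\mathcal{H}}$ is not of pure variable-exponent type and Proposition \ref{properties_norm_modular_r} does not apply verbatim.
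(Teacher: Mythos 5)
The paper does not prove this proposition; it simply cites Crespo-Blanco--Gasi\'nski--Harjulehto--Winkert \cite[Propositions 2.16 and 2.18]{Crespo-Blanco-Gasinski-Harjulehto-Winkert-2022}, so there is no ``paper proof'' to compare against. Your reconstruction is essentially correct and is in fact the standard route taken in that reference.

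A few remarks for precision. The reduction of (i) and (vi) to the pointwise bounds $t^{p(x)} \leq \mathcal{H}(x,t)$ and $\mu(x)\,t^{q(x)} \leq \mathcal{H}(x,t)$ is exactly the right mechanism: a pointwise comparison of $\Phi$-functions yields a comparison of modulars and hence, by definition of the Luxemburg norm, a comparison of norms. For (vii), the estimate $\rho_{\mathcal{H}}(u/\tau) \leq \rho_{p(\cdot)}(u/\tau) + \|\mu\|_{\infty}\rho_{q(\cdot)}(u/\tau)$ together with the elementary embedding $L^{q(\cdot)}(\Omega) \hookrightarrow L^{p(\cdot)}(\Omega)$ on the bounded domain gives the conclusion, though one should note that the intermediate embedding constant (for $L^{q(\cdot)} \hookrightarrow L^{p(\cdot)}$) must be absorbed into the choice of $C$. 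Items (ii)--(v) indeed follow by composing the continuous embedding $\WH \hookrightarrow \Wp{p(\cdot)}$ from (i) with the variable-exponent Sobolev and trace embeddings recalled before Remark \ref{remark-spaces}; for (iv) the hypothesis $p\in W^{1,\gamma}(\Omega)$, $\gamma>N$, is what the trace embedding directly requires, so the detour through Remark \ref{remark-spaces} is dispensable there. Finally, for (viii), your splitting $\rho_{\mathcal{H}}(u_n-u) \leq \rho_{p(\cdot)}(u_n-u) + \|\mu\|_{\infty}\rho_{q(\cdot)}(u_n-u)$ and the compactness of $\WH\hookrightarrow L^{p(\cdot)}(\Omega)$ and $\WH\hookrightarrow L^{q(\cdot)}(\Omega)$ (both subcritical under \eqref{H}) do give modular convergence; the passage from modular to norm convergence is indeed justified by the $\Delta_2$-condition for $\mathcal{H}$, which holds since $\mathcal{H}(x,2t)\le 2^{q_+}\mathcal{H}(x,t)$, and the explicit modular--norm equivalences for $L^{\mathcal{H}}(\Omega)$ are recorded in \cite[Proposition 2.13]{Crespo-Blanco-Gasinski-Harjulehto-Winkert-2022}. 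You have correctly identified that this final conversion is the only non-routine step; everything else is a composition of known embeddings.
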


For our existence results, we equip the space $\WH$ with the following norm
\begin{equation}
	\begin{aligned}
		\label{norm}
		\norm{u} = \inf \Bigg\{ \tau >0 \, \colon
		& \into \l( \l| \frac{\nabla u}{\tau}\r|^{p(x)} + \mu(x) \l| \frac{\nabla u}{\tau}\r|^{q(x)} \r) \,\diff x \\
		& + \into \l| \frac{u}{\tau}\r|^{p(x)}\, \diff x +\intor \l| \frac{u}{\tau}\r|^{p(x)}\, \diff \sigma  \le 1\Bigg\},
	\end{aligned}
\end{equation}
induced by the modular
\begin{align*}
	\rho(u) = \into \left( |\nabla u|^{p(x)}+\mu(x)|\nabla u|^{q(x)}\right) \,\diff x + \into |u|^{p(x)} \,\diff x + \intor |u|^{p(x)} \,\diff \sigma,
\end{align*}
for all $u \in \WH$. We emphasize that in Section \ref{Section:equivalent norm} we prove in Proposition \ref{proposition_equivalent_norm} the existence of a new equivalent norm in $\WH$, denoted by $\|\cdot\|_{1,\mathcal{H}}^*$, in a more general setting and the  norm \eqref{norm} derives from $\|\cdot\|_{1,\mathcal{H}}^*$ defined in \eqref{norm_general} by choosing $\vartheta_1 \equiv \vartheta_2 \equiv 1$ and $\delta_1 \equiv \delta_2 \equiv p$. For reader's convenience, we give here the relationship between the modular $\rho(\cdot)$ and the norm $\|\cdot\|$, while in Section \ref{Section:equivalent norm} we present the same proposition for the general ones, see Proposition \ref{properties_modular_norm_general}.
\begin{proposition}\label{properties_modular_norm_complete}
	Let hypothesis \eqref{H} be satisfied, $u\in \WH$ and $\lambda\in\R$. Then the following hold:
	\begin{enumerate}
		\item[\textnormal{(i)}]
			If $u\neq 0$, then $\|u\|=\lambda \quad \Longleftrightarrow \quad \rho(\frac{u}{\lambda})=1$;
		\item[\textnormal{(ii)}]
			$\|u\|<1$ (resp.\,$>1$, $=1$) $\quad \Longleftrightarrow \quad \rho(u)<1$ (resp.\,$>1$, $=1$);
		\item[\textnormal{(iii)}]
			If $\|u\|<1 \quad \Longrightarrow \quad \|u\|^{q_+}\leq \rho(u)\leq\|u\|^{p_-}$;
		\item[\textnormal{(iv)}]
			If $\|u\|>1 \quad \Longrightarrow \quad \|u\|^{p_-}\leq \rho(u)\leq\|u\|^{q_+}$;
		\item[\textnormal{(v)}]
			$\|u\|\to 0 \quad \Longleftrightarrow \quad  \rho(u)\to 0$;
		\item[\textnormal{(vi)}]
			$\|u\|\to +\infty \quad \Longleftrightarrow \quad  \rho(u)\to +\infty$;
		\item[\textnormal{(vii)}]
			$\|u\|\to 1 \quad \Longleftrightarrow \quad  \rho(u)\to 1$;
	\end{enumerate}
\end{proposition}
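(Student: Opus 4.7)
The proof follows the template already in use for the Luxemburg norm in variable-exponent Lebesgue spaces (compare Proposition \ref{properties_norm_modular_r}), adapted to the present modular which bundles two variable exponents $p(\cdot)$, $q(\cdot)$ and a boundary integral. The key algebraic fact to use throughout is the scaling identity
\begin{align*}
	\rho(\lambda v)=\into\l(\lambda^{p(x)}|\nabla v|^{p(x)}+\mu(x)\lambda^{q(x)}|\nabla v|^{q(x)}\r)\diff x+\into\lambda^{p(x)}|v|^{p(x)}\diff x+\intor\lambda^{p(x)}|v|^{p(x)}\diff\sigma,
\end{align*}
together with the elementary bounds $\lambda^{q_+}\le\lambda^{r(x)}\le\lambda^{p_-}$ for $0<\lambda\le 1$ and the reverse chain for $\lambda\ge 1$, valid for any $r\in\{p,q\}$ since $1<p_-\le p(x)\le q(x)\le q_+$ on $\close$.

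The first step is to prove (i) via the auxiliary fact that, for fixed $u\in\WH\setminus\{0\}$, the map $\varphi(\tau):=\rho(u/\tau)$ is continuous and strictly decreasing on $(0,\infty)$, with $\varphi(\tau)\to+\infty$ as $\tau\to 0^+$ and $\varphi(\tau)\to 0$ as $\tau\to\infty$. Continuity on a compact subinterval $[a,b]\subset(0,\infty)$ follows from dominated convergence: each integrand is pointwise dominated by its value at $\tau=a$, whose integral is $\rho(u/a)<\infty$ because $u/a\in\WH$ (and the boundary term is finite thanks to the trace embedding in Proposition \ref{proposition_embeddings}(iv)). Strict monotonicity follows from the strict monotonicity of $\tau\mapsto\tau^{-r(x)}$ on the positive-measure set $\{u\ne 0\}$, and the $0$ and $+\infty$ limits from monotone convergence. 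Hence there is a unique $\tau^*>0$ with $\varphi(\tau^*)=1$, and the infimum definition of $\|u\|$ together with continuity and monotonicity forces $\|u\|=\tau^*$; both implications of (i) are immediate.

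With (i) established, the next step is to apply the scaling identity with $v:=u/\|u\|$ (so $\rho(v)=1$ by (i)) and $\lambda:=\|u\|$. Distributing $\lambda^{p(x)}$ and $\lambda^{q(x)}$ over each term and sandwiching by the elementary bounds yields
\begin{align*}
	\|u\|^{q_+}\le\rho(u)\le\|u\|^{p_-}\quad\text{if }\|u\|\le 1,\qquad \|u\|^{p_-}\le\rho(u)\le\|u\|^{q_+}\quad\text{if }\|u\|\ge 1,
\end{align*}
proving (iii) and (iv) simultaneously. Item (ii) is then a corollary: $\|u\|<1$ forces $\rho(u)\le\|u\|^{p_-}<1$ by (iii), $\|u\|>1$ forces $\rho(u)\ge\|u\|^{p_-}>1$ by (iv), and $\|u\|=1$ forces $\rho(u)=1$ by (i); the reverse implications come by contrapositive using the same two estimates.

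Finally, the asymptotic assertions (v)--(vii) reduce to (iii) and (iv). If $\|u_n\|\to 0$, then eventually $\|u_n\|<1$ and hence $\rho(u_n)\le\|u_n\|^{p_-}\to 0$; conversely $\rho(u_n)\to 0$ eventually gives $\rho(u_n)<1$, hence $\|u_n\|<1$ by (ii), and therefore $\|u_n\|^{q_+}\le\rho(u_n)\to 0$. The statement (vi) is symmetric via (iv), and (vii) follows by combining (v), (vi) with (ii) to rule out accumulation points away from $1$. I expect the only genuinely delicate step to be the careful continuity/monotonicity argument for $\varphi$, because the modular mixes two exponents and a boundary term; once this auxiliary fact is in place, everything else is elementary scaling bookkeeping.
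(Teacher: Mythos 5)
Your proof is correct and follows the standard Luxemburg-modular template (continuity and strict monotonicity of $\tau\mapsto\rho(u/\tau)$, then the scaling sandwich with exponents $p_-$ and $q_+$), which is precisely the approach the paper implicitly invokes: the authors do not prove this proposition directly but state it as a special case of Proposition \ref{properties_modular_norm_general}, which they in turn justify by referring to Propositions 2.13 and 2.14 of Crespo-Blanco-Gasi\'{n}ski-Harjulehto-Winkert \cite{Crespo-Blanco-Gasinski-Harjulehto-Winkert-2022}. Two small points worth tightening in your write-up: the scaling sandwich in steps (iii)--(iv) requires $u\neq 0$ (the $u=0$ case holds trivially since all quantities vanish), and in (vii) the precise bookkeeping to rule out subsequential limits $L\neq 1$ should invoke (iii) and (iv), not (ii) alone (one first excludes $0$ and $\infty$ via (v) and (vi), then excludes any finite $L\neq 1$ by passing to the limit in the two-sided bounds of (iii) or (iv) along a subsequence). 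Neither is a gap in the argument, only in the presentation.
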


Moreover, for any $h \in \R$ let $h^+=\max\{h,0\}$ and $h^-=\max\{-h,0\}$, then one has that $h=h^+-h^-$ and $|h|=h^++h^-$. Also, from Crespo-Blanco-Gasi\'nski-Harjulehto-Winkert \cite[Proposition 2.17]{Crespo-Blanco-Gasinski-Harjulehto-Winkert-2022} we know that, under assumption \eqref{H}, if $u \in \WH$ then $u^\pm \in \WH$.

Now, denote by $\lan\,\cdot\,,\,\cdot\,\ran$ the duality pairing between $\WH$ and its dual space $\WH^*$ and by $A\colon \WH \to \WH^*$ the nonlinear operator defined for all $u, v \in \WH$ by
\begin{align*}
	\lan A(u), v\ran =
	& \into \l( |\nabla u|^{p(x)-2}\nabla u+ \mu(x)|\nabla u|^{q(x)-2}\nabla u\r)\cdot \nabla v  \, \diff x \\
	& + \into |u|^{p(x)-2} u v \, \diff x + \intor |u|^{p(x)-2} u v \, \diff \sigma.
\end{align*}
In the following proposition we give the properties of this operator, see Proposition \ref{properties_general_operator_double_phase} in Section \ref{Section:equivalent norm}.

\begin{proposition}
	\label{properties_operator_double_phase}
	Let hypothesis \eqref{H} be satisfied. Then, the operator $A\colon \WH$ $\to \WH^*$ is bounded, continuous, strictly monotone and of type $(\Ss_+)$, that is,
	\begin{align*}
		\text{if} \quad	u_n\weak u \quad \text{in }\WH \quad\text{and}\quad  \limsup_{n\to\infty}\,\lan A(u_n),u_n-u\ran\le 0,
	\end{align*}
	then $u_n\to u$ in $\WH$. Moreover, it is
	coercive and a homeomorphism.
\end{proposition}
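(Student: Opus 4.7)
I would decompose $A = A_\nabla + A_\Omega + A_{\partial\Omega}$ into gradient, bulk, and boundary contributions and verify each claim piece by piece. \textbf{Boundedness} is a direct application of the variable-exponent Hölder inequality to each summand of $\lan A(u), v\ran$, combined with the norm–modular estimates in Proposition \ref{properties_norm_modular_r} and the continuous embeddings $\WH\hookrightarrow \Lp{p(\cdot)}$, $\Lprand{p(\cdot)}$, $L^{q(\cdot)}_\mu(\Omega)$ from Proposition \ref{proposition_embeddings}; this controls $\|A(u)\|_{\WH^*}$ by a power of $\|u\|$ on bounded sets. For \textbf{continuity}, given $u_n\to u$ in $\WH$, after extracting a subsequence one has a.e.\ convergence of $u_n$ and $\nabla u_n$ with integrable majorants, so Vitali's theorem applied termwise to the three Nemytskii operators yields $A(u_n)\to A(u)$ in $\WH^*$ (and the usual subsequence principle upgrades this to convergence of the full sequence).

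\textbf{Strict monotonicity} reduces to the pointwise inequality $(|a|^{r-2}a - |b|^{r-2}b)\cdot(a-b)\ge 0$ for $r>1$, with equality iff $a=b$. Applying this with $r=p(x)$ and $r=q(x)$ (weighted by $\mu(x)\ge 0$) to the gradients, and with $r=p(x)$ to the values of $u$ in $\Omega$ and on $\partial\Omega$, one gets $\lan A(u)-A(v), u-v\ran\ge 0$; vanishing of the pairing forces each nonnegative integrand to vanish a.e., giving $\nabla u=\nabla v$ and $u=v$ a.e.\ in $\Omega$, hence $u=v$ in $\WH$.

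The \textbf{main obstacle} is the $(\Ss_+)$ property. Suppose $u_n\weak u$ in $\WH$ with $\lims \lan A(u_n), u_n-u\ran \le 0$. The compact embeddings $\WH\hookrightarrow\Lp{\mathcal{H}}$ and $\WH\hookrightarrow\Lprand{p(\cdot)}$ (Proposition \ref{proposition_embeddings}(viii),(v)) give $u_n\to u$ strongly in these spaces, and continuity of the Nemytskii map $t\mapsto|t|^{p(\cdot)-2}t$ into the appropriate dual Lebesgue spaces forces $\lan A_\Omega(u_n), u_n-u\ran \to 0$ and $\lan A_{\partial\Omega}(u_n), u_n-u\ran\to 0$. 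Hence $\lims \lan A_\nabla(u_n), u_n-u\ran\le 0$; combining this with the monotonicity bound $\lan A_\nabla(u_n) - A_\nabla(u), u_n-u\ran\ge 0$ and $\lan A_\nabla(u), u_n-u\ran\to 0$ (from weak convergence), one obtains
\begin{align*}
\lim_{n\to\infty} \into \bigl[& (|\nabla u_n|^{p(x)-2}\nabla u_n - |\nabla u|^{p(x)-2}\nabla u) \\
&+ \mu(x)(|\nabla u_n|^{q(x)-2}\nabla u_n - |\nabla u|^{q(x)-2}\nabla u)\bigr] \cdot (\nabla u_n-\nabla u)\,\diff x = 0.
\end{align*}
The integrand is pointwise nonnegative, so up to a subsequence it converges to $0$ a.e., and strict convexity of $t\mapsto|t|^r$ together with the elementary pointwise estimates for the $r$-Laplacian vector field yields $\nabla u_n\to \nabla u$ a.e.\ in $\Omega$. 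Uniform boundedness of the modular $\rho_{\mathcal{H}}(\nabla u_n)$ (via Proposition \ref{properties_modular_norm_complete} applied to the weakly convergent, hence bounded, sequence) combined with Vitali's theorem then promotes this to strong convergence $\nabla u_n\to\nabla u$ in $\Lp{\mathcal{H}}$; together with $u_n\to u$ already in $\Lp{\mathcal{H}}$, this gives $u_n\to u$ in $\WH$, completing the $(\Ss_+)$ argument.
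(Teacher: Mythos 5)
Your decomposition into $A=A_\nabla+A_\Omega+A_{\partial\Omega}$, and the arguments for boundedness, continuity and strict monotonicity, are fine and follow the standard route (the paper itself just cites the proof of the more general Proposition \ref{properties_general_operator_double_phase}, which in turn refers back to the literature for these three items). Your observation that, for the specific operator $A$ with subcritical exponent $p(\cdot)$ in the bulk and on the boundary, the compact embeddings $\WH\hookrightarrow L^{\mathcal{H}}(\Omega)$ and $\WH\hookrightarrow L^{p(\cdot)}(\partial\Omega)$ let you dispose of $A_\Omega$ and $A_{\partial\Omega}$ outright is a genuine simplification compared with the paper's treatment of the general operator $B$, where those exponents may be critical and the embeddings are no longer compact; there the paper must carry the bulk and boundary terms through the whole convergence-in-measure/Vitali machinery.

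However, the final step of your $(\Ss_+)$ argument has a real gap. You claim that a.e.\ convergence $\nabla u_n\to\nabla u$ together with \emph{boundedness} of the modular $\rho_{\mathcal{H}}(\nabla u_n)$ suffices, via Vitali's theorem, to conclude $\nabla u_n\to\nabla u$ in $L^{\mathcal{H}}(\Omega)$. That is false: Vitali's convergence theorem requires \emph{uniform integrability} of $\{\mathcal{H}(x,|\nabla u_n|)\}_n$ (equivalently of $\{\mathcal{H}(x,|\nabla u_n-\nabla u|)\}_n$), which is strictly stronger than boundedness of $\rho_{\mathcal{H}}(\nabla u_n)=\|\mathcal{H}(\cdot,|\nabla u_n|)\|_{L^1}$. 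A sequence bounded in $L^1$ and converging a.e.\ to $0$ need not converge in $L^1$ (concentration bumps); in the unweighted $L^p$ setting the standard example $f_n=n^{1/p}\mathbbm{1}_{[0,1/n]}$ shows the same phenomenon for the $p$-th power. What is actually needed, and what the paper supplies, is an argument that the modulars \emph{converge} -- that
\begin{align*}
\into\l(\frac{|\nabla u_n|^{p(x)}}{p(x)}+\mu(x)\frac{|\nabla u_n|^{q(x)}}{q(x)}\r)\,\diff x
\longrightarrow
\into\l(\frac{|\nabla u|^{p(x)}}{p(x)}+\mu(x)\frac{|\nabla u|^{q(x)}}{q(x)}\r)\,\diff x,
\end{align*}
which one extracts from $\limsup\lan A_\nabla(u_n),u_n-u\ran\le 0$ together with weak lower semicontinuity. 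Once that convergence of integrals is in hand, convergence in measure plus the \emph{converse} of Vitali's theorem yields the uniform integrability, and only then does the forward Vitali theorem (applied to the differences) give $\rho_{\mathcal{H}}(\nabla u_n-\nabla u)\to 0$. Alternatively one can bypass Vitali entirely via the elementary vector inequalities for the $r$-Laplacian: for $r\ge 2$ the nonnegative integrand dominates $c_r|\nabla u_n-\nabla u|^{r}$, and for $1<r<2$ it dominates $c_r|\nabla u_n-\nabla u|^{2}(|\nabla u_n|+|\nabla u|)^{r-2}$, which combined with Hölder and the bounded modular again gives $\rho_{\mathcal{H}}(\nabla u_n-\nabla u)\to 0$. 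Either of these closes the gap; the implication you wrote down does not.
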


Next, we recall some tools needed in our investigations. In the sequel, for $X$ being a Banach space, we denote by $X^*$ its topological dual space.

\begin{definition}\label{cerami_condition}
	Given $L\in C^1(X)$, we say that $L$ satisfies the Cerami condition (\textnormal{C}-condition for short), if every sequence $\{u_n\}_{n\in\N}\subseteq X$ such that
	\begin{enumerate}[itemsep=0.2cm,label=\textnormal{(C${\arabic*}$)},ref=\textnormal{C${\arabic*}$}]
		\item\label{C1}
			$\{L(u_n)\}_{n\geq 1} \subseteq \R$ is bounded,
		\item\label{C2}
			$\l(1+\|u_n\|_X\r)L'(u_n)\to 0$ in $X^*$ as $n\to \infty$,
	\end{enumerate}
	admits a strongly convergent subsequence in $X$. We say that $L$ satisfies the Cerami condition at level $c \in \R$ (\textnormal{C$_c$}-condition for short), if \eqref{C1} is replaced by $L(u_n) \to c$ as $n \to \infty$.
\end{definition}

The following version of the Mountain-Pass Theorem is stated in the book of Papageorgiou-R\u{a}dulescu-Repov\v{s} \cite[Theorem 5.4.6]{Papageorgiou-Radulescu-Repovs-2019}.

\begin{theorem}\label{mountain_pass_theorem}
	Let $X$ be a Banach space and suppose $\varphi \in C^1(X), u_0, u_1 \in X$ with $\norm{u_1-u_0}>\delta>0$,
	\begin{align*}
		& \max\l\{\varphi(u_0), \varphi(u_1)\r\} \le \inf \l\{\varphi(u) \,:\, \norm{u-u_0}=\delta \r\} = m_\delta,                                                                   \\
		& c=\inf_{\gamma \in \Gamma} \max_{0 \le t \le 1} \varphi(\gamma(t)) \quad \text{with} \quad
		\Gamma = \l\{ \gamma \in C\l([0,1],X\r) \,:\, \gamma(0)=u_0, \gamma(1)=u_1\r\},
	\end{align*}
	and $\varphi$ satisfies the \textnormal{C$_c$}-condition. Then $c \ge m_\delta$ and $c$ is a critical value of $\varphi$. Moreover, if $c=m_\delta$, then there exists $u \in B_\delta(u_0)$ such that $\varphi'(u)=0$.
\end{theorem}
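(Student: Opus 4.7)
The plan is to derive this classical Mountain-Pass result by a deformation argument, applying the Quantitative Deformation Lemma (Lemma \ref{quantitative_deformation_lemma}) to a near-optimal path in $\Gamma$, with the \textnormal{C$_c$}-condition supplying the compactness that rules out a critical-free level $c$. The easy inequality is $c\ge m_\delta$: for any $\gamma\in\Gamma$, the continuous map $t\mapsto \norm{\gamma(t)-u_0}$ vanishes at $t=0$ and exceeds $\delta$ at $t=1$ since $\norm{u_1-u_0}>\delta$, so by the intermediate value theorem there is $t^\ast\in(0,1)$ with $\norm{\gamma(t^\ast)-u_0}=\delta$; hence $\varphi(\gamma(t^\ast))\ge m_\delta$, and taking the maximum over $t$ followed by the infimum over $\gamma$ yields $c\ge m_\delta$.

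To show that $c$ is a critical value in the case $c>m_\delta$, I would argue by contradiction. Assume that $K_c:=\{u\in X:\varphi(u)=c,\ \varphi'(u)=0\}=\emptyset$ and pick $\varepsilon_0\in(0,(c-m_\delta)/2)$, so that the strip $\varphi^{-1}([c-2\varepsilon_0,c+2\varepsilon_0])$ is disjoint from $\{u_0,u_1\}$. The \textnormal{C$_c$}-condition together with Lemma \ref{quantitative_deformation_lemma} (applied with $S=X$) produces some $\varepsilon\in(0,\varepsilon_0)$ and a continuous deformation $\eta\colon[0,1]\times X\to X$ that fixes every point outside $\varphi^{-1}([c-2\varepsilon,c+2\varepsilon])$, in particular $u_0$ and $u_1$, and satisfies $\varphi(\eta(1,u))\le c-\varepsilon$ whenever $\varphi(u)\le c+\varepsilon$. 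Choosing $\gamma\in\Gamma$ with $\max_t\varphi(\gamma(t))\le c+\varepsilon$, the deformed curve $\tilde\gamma(t):=\eta(1,\gamma(t))$ still belongs to $\Gamma$ but now satisfies $\max_t\varphi(\tilde\gamma(t))\le c-\varepsilon<c$, contradicting the definition of $c$.

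The borderline case $c=m_\delta$ is more delicate, since $\varphi(u_0)$ and $\varphi(u_1)$ may both equal $c$, and a deformation around level $c$ need not preserve the endpoints. Supposing no critical point of $\varphi$ at level $c$ lies in $B_\delta(u_0)$ (including its sphere), I would apply Lemma \ref{quantitative_deformation_lemma} with $S$ chosen as a thin tubular neighborhood of the sphere $\{u:\norm{u-u_0}=\delta\}$ that avoids $u_0$ and $u_1$, and deform a near-optimal path so that its intersection with that sphere is pushed strictly below level $c=m_\delta$. Since the deformation fixes points outside $S$, the new path still joins $u_0$ to $u_1$ and its crossing value on the sphere drops below $m_\delta$, contradicting the definition of $m_\delta$; hence a critical point at level $c$ must exist in $B_\delta(u_0)$. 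The main obstacle throughout is precisely this localization in the degenerate case: one must confine the deformation to a neighborhood of the separating sphere while leaving $u_0$ and $u_1$ untouched, which is only possible because the \textnormal{C$_c$}-condition forces $K_c$ to be compact so any hypothetical critical set on the sphere can be enclosed in a neighborhood disjoint from the two endpoints.
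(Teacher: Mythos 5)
The paper does not prove this theorem: it is quoted verbatim from Papageorgiou--R\u{a}dulescu--Repov\v{s} \cite[Theorem 5.4.6]{Papageorgiou-Radulescu-Repovs-2019} and used as an external tool, so there is no proof in the text for your attempt to be compared against.

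That said, your sketch does have a genuine gap in the main ($c>m_\delta$) case, precisely where you claim that ``the \textnormal{C$_c$}-condition together with Lemma \ref{quantitative_deformation_lemma} (applied with $S=X$) produces \ldots a deformation.'' With $S=X$ one has $S_{2\delta}=X$, so the hypothesis of Lemma \ref{quantitative_deformation_lemma} demands a uniform positive lower bound on $\norm{\varphi'(u)}_*$ over the whole strip $\varphi^{-1}([c-2\varepsilon,c+2\varepsilon])$. But the Cerami condition only controls the weighted quantity $(1+\norm{u})\norm{\varphi'(u)}_*$: from $K_c=\emptyset$ and \textnormal{C$_c$} you can only extract constants $\varepsilon_1,\alpha>0$ with $(1+\norm{u})\norm{\varphi'(u)}_*\ge\alpha$ on $\varphi^{-1}([c-\varepsilon_1,c+\varepsilon_1])$, which does not forbid $\norm{\varphi'(u)}_*\to 0$ along a sequence escaping to infinity inside the strip. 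So the hypothesis of Willem's Lemma 2.3 cannot be verified with your choice of $S$, and the contradiction does not go through as written. The standard repairs are either (a) to restrict to a bounded set: fix a near-optimal path $\gamma$, take $S$ a bounded set containing the compact image $\gamma([0,1])$, then $S_{2\delta}$ is bounded, and on $\varphi^{-1}([c-2\varepsilon,c+2\varepsilon])\cap S_{2\delta}$ the Cerami condition with $K_c=\emptyset$ does force a lower bound on the unweighted $\norm{\varphi'(u)}_*$ for small enough $\varepsilon$; or (b) to replace Lemma \ref{quantitative_deformation_lemma} by a Cerami-adapted deformation lemma whose flow uses a pseudo-gradient field rescaled by $1/(1+\norm{u})$. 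Either way, the point is that an extra step is needed to reconcile the Cerami hypothesis with the unweighted gradient bound required by the quantitative deformation lemma you are quoting. Your treatment of $c\ge m_\delta$ is fine, and your outline of the degenerate case $c=m_\delta$ identifies the right idea (localize the deformation near the separating sphere, using compactness of $K_c$ to keep $u_0$, $u_1$ fixed), though it too tacitly relies on the same gradient bound and would need the same fix.
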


Finally, we present a version of the Quantitative Deformation Lemma, which can be found in Willem \cite[Lemma 2.3]{Willem-1996}.
\begin{lemma}
	\label{quantitative_deformation_lemma}
	Let $X$ be a Banach space, $\varphi \in C^1(X;\R)$, $\emptyset \ne S \subset X$, $c \in \R$, $\varepsilon, \delta>0$ such that
	\begin{align*}
		\norm{\varphi'(u)}_* \ge \frac{8\varepsilon}{\delta} \qquad \text{for all $u \in \varphi^{-1}\l([c-2\varepsilon, c+2\varepsilon]\r) \cap S_{2\delta}$},
	\end{align*}
	where $S_r = \{u \in X \,:\, d(u,S) = \inf_{u_0 \in S} \norm{u-u_0}<r\}$ for any $r>0$. Then there exists $\eta \in C([0,1]\times X;X)$ such that
	\begin{enumerate}
		\item[\textnormal{(i)}]
			$\eta(t,u)=u$, if $t=0$ or if $u \notin \varphi^{-1}\l([c-2\varepsilon, c+2\varepsilon]\r) \cap S_{2\delta}$,
		\item[\textnormal{(ii)}]
			$\varphi(\eta(1,u))\le c-\varepsilon$ for all $u \in \varphi^{-1}((-\infty,c+\varepsilon])\cap S$,
		\item[\textnormal{(iii)}]
			$\eta(t,\cdot)$ is an homeomorphism of $X$ for all $t \in [0,1]$,
		\item[\textnormal{(iv)}]
			$\norm{\eta(t,u)-u}\le \delta$ for all $u \in X$ and $t \in [0,1]$,
		\item[\textnormal{(v)}]
			$\varphi(\eta(\cdot,u))$ is decreasing for all $u \in X$,
		\item[\textnormal{(vi)}]
			$\varphi(\eta(t,u))<c$ for all $u \in \varphi^{-1}((-\infty,c]) \cap S_\delta$ and $t \in (0,1]$.
	\end{enumerate}
\end{lemma}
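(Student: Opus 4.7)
The statement is the classical Quantitative Deformation Lemma (attributed in the excerpt to Willem), so my plan is to reproduce the textbook pseudo-gradient flow construction rather than to devise something new. The idea is to build a locally Lipschitz vector field $f\colon X \to X$ whose flow $\eta$ decreases $\varphi$ on the energy strip around $c$ that is close to $S$, leaves points far away from this strip fixed, and has displacement bounded by $\delta$; then properties (i)--(vi) are read off from the construction.

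First I would produce a pseudo-gradient field. On $\widetilde{X} := \{u \in X : \varphi'(u)\neq 0\}$, by the standard partition-of-unity argument, one obtains a locally Lipschitz $V\colon \widetilde X \to X$ with $\|V(u)\|\leq 1$ and $\langle \varphi'(u), V(u)\rangle \geq \tfrac{1}{2}\|\varphi'(u)\|_*$. Next, introduce the strips $A = \varphi^{-1}([c-2\varepsilon,c+2\varepsilon]) \cap S_{2\delta}$ and $B = \varphi^{-1}([c-\varepsilon,c+\varepsilon]) \cap S_\delta$, so that $B \subseteq A$, together with a Lipschitz cut-off $\psi\colon X \to [0,1]$ with $\psi \equiv 1$ on $B$ and $\psi \equiv 0$ on $X \setminus A$, e.g.
$$\psi(u) = \frac{d(u, X\setminus A)}{d(u, X\setminus A)+d(u,B)}.$$
Set $f(u) = -\delta\,\psi(u)\, V(u)$ on $\widetilde X$ and $f(u) = 0$ elsewhere. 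The hypothesis $\|\varphi'(u)\|_* \geq 8\varepsilon/\delta$ on $A$ forces $A \subseteq \widetilde X$, so $f$ is well-defined, locally Lipschitz, and satisfies $\|f(u)\|\leq \delta$ on all of $X$.

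Solving the Cauchy problem $\partial_t \eta(t,u) = f(\eta(t,u))$, $\eta(0,u)=u$, via Picard--Lindel\"of on Banach spaces (the boundedness of $f$ gives global existence) yields a continuous flow $\eta\colon [0,1]\times X \to X$; uniqueness and the group property imply that $\eta(t,\cdot)$ is a homeomorphism, giving (iii). Property (i) comes from the fact that $f(u)=0$ whenever $u\notin A$, so $t\mapsto u$ is \emph{the} solution starting at $u$; (iv) is immediate from $\|f\|\leq \delta$; and (v) follows from
$$\frac{d}{dt}\varphi(\eta(t,u)) = -\delta\,\psi(\eta(t,u))\,\langle \varphi'(\eta(t,u)), V(\eta(t,u))\rangle \leq 0.$$
For the quantitative (ii), take $u \in \varphi^{-1}((-\infty,c+\varepsilon])\cap S$: either $\varphi(\eta(t_0,u))\leq c-\varepsilon$ for some $t_0\in[0,1]$, in which case (v) finishes the job; or $\eta(t,u)\in\varphi^{-1}([c-\varepsilon,c+\varepsilon])$ for all $t\in[0,1]$, and by (iv) also in $S_\delta$, hence in $B$, so $\psi\equiv 1$ along the trajectory and $\|\varphi'(\eta(t,u))\|_*\geq 8\varepsilon/\delta$ throughout. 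Then $\frac{d}{dt}\varphi(\eta)\leq -\delta\cdot 1\cdot \tfrac{1}{2}\cdot\tfrac{8\varepsilon}{\delta}=-4\varepsilon$, which integrated over $[0,1]$ gives $\varphi(\eta(1,u))\leq (c+\varepsilon)-4\varepsilon\leq c-\varepsilon$. Finally (vi) follows from monotonicity if $\varphi(u)<c$ and from the strict initial decrease if $\varphi(u)=c$, since then $u\in B$ forces $\psi(u)=1$ and $\varphi'(u)\neq 0$.

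The main obstacle is the calibration of constants: the two buffer gaps $\delta$ versus $2\delta$ and $\varepsilon$ versus $2\varepsilon$ must be matched against the size $\delta$ of the push so that simultaneously $\|\eta(t,u)-u\|\leq \delta$ holds and an energy drop of at least $2\varepsilon$ is achieved in unit time. This is exactly where the constant $8$ in the hypothesis $\|\varphi'\|_*\geq 8\varepsilon/\delta$ enters and what ensures that a trajectory starting in $S$ cannot escape the region $B$ where the quantitative hypothesis is operative before the required decrease has taken place.
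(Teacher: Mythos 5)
The paper does not prove this lemma; it is quoted from Willem \cite[Lemma 2.3]{Willem-1996} without proof. Your reconstruction of the pseudo-gradient-flow argument is precisely the standard proof in Willem's book and is correct: the only deviations are cosmetic normalization choices (you take $\norm{V}\le 1$ with $\lan\varphi'(u),V(u)\ran\ge\tfrac12\norm{\varphi'(u)}_*$ and the vector field $-\delta\psi V$, whereas Willem uses $\norm{g}\le 2\norm{\varphi'}$, $\lan\varphi',g\ran\ge\norm{\varphi'}^2$ and $-\psi g/\norm{g}^2$ with a time rescaling), and the cut-off $\psi$ you write down is locally Lipschitz rather than globally Lipschitz, which is all that is needed for the Cauchy problem.
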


\section{A new equivalent norm}\label{Section:equivalent norm}
In this section we prove the existence of a new and general equivalent norm in $\Wp{\mathcal{H}}$. First, in addition to \eqref{H}, we suppose the following conditions:
\begin{enumerate}[label=\textnormal{(H$1$)},ref=\textnormal{H$1$}]
	\item\label{H1}
	\begin{enumerate}[label=\textnormal{(\roman*)},ref=\textnormal{\roman*}]
		\item
			$\delta_1, \delta_2\in C(\close)$ with $1\leq \delta_1(x)\leq p^*(x)$ and $1\leq \delta_2(x) \leq p_*(x)$ for all $x\in\close$, where
			\begin{enumerate}
				\item[\textnormal{(a$_1$)}]
				$p\in C(\close) \cap C^{0, \frac{1}{|\log t|}}(\close)$, if $\delta_1(x)=p^*(x)$ for some $x\in\close$;
				\item[\textnormal{(a$_2$)}]
				$p \in C(\close) \cap W^{1,\gamma}(\Omega)$ for some $\gamma>N$, if $\delta_2(x)=p_*(x)$ for some $x\in\close$;
			\end{enumerate}
		\item
			$\vartheta_1 \in \Linf$ with $\vartheta_1(x) \geq 0$ for a.a.\,$x\in\Omega$;
		\item
			$\vartheta_2 \in L^\infty(\partial\Omega)$ with $\vartheta_2(x)\geq 0$ for a.a.\,$x\in\partial\Omega$;
		\item\label{H1iv}
			$\vartheta_1 \not\equiv 0$ or $\vartheta_2\not\equiv 0$.
	\end{enumerate}
\end{enumerate}
In the sequel we use the seminormed spaces
\begin{align*}
	L^{\delta_1(\cdot)}_{\vartheta_1}(\Omega)
	& =\left \{u\in M(\Omega) \,:\, \into \vartheta_1(x) | u|^{\delta_1(x)}\, \diff x< \infty  \right \},\\
	L^{\delta_2(\cdot)}_{\vartheta_2}(\partial\Omega)
	& =\left \{u\in M(\Omega) \,:\, \intor \vartheta_2(x) | u|^{\delta_2(x)}\, \diff \sigma< \infty  \right \},
\end{align*}
with corresponding seminorms
\begin{align*}
	\|u\|_{\delta_1(\cdot),\vartheta_1}
	& =\inf \left \{ \tau >0 \,:\, \into \vartheta_1(x) \l|\frac{u}{\tau}\r|^{\delta_1(x)} \,\diff x  \leq 1  \right \},\\
	\|u\|_{\delta_2(\cdot),\vartheta_2,\partial\Omega}
	& =\inf \left \{ \tau >0 \,:\, \intor \vartheta_2(x) \l|\frac{u}{\tau}\r|^{\delta_2(x)} \,\diff \sigma  \leq 1  \right \},
\end{align*}
respectively. We set
\begin{align} \label{norm1}
	\|u\|_{1,\mathcal{H}}^{\circ}
	& = \|\nabla u\|_\mathcal{H}+\|u\|_{\delta_1(\cdot),\vartheta_1} +\|u\|_{\delta_2(\cdot),\vartheta_2,\partial\Omega},
\end{align}
and
\begin{align}\label{norm_general}
	\begin{split}
		\|u\|_{1,\mathcal{H}}^{*}
		& = \inf\Bigg\{\tau>0\,:\, \into \l(\l|\frac{\nabla u}{\tau}\r|^{p(x)}+\mu(x)\l|\frac{\nabla u}{\tau}\r|^{q(x)}\r)\,\diff x\\
		& \qquad \qquad \qquad +\into \vartheta_1(x) \l|\frac{u}{\tau}\r|^{\delta_1(x)}\,\diff x +\intor\vartheta_2(x)\l|\frac{u}{\tau}\r|^{\delta_2(x)}\,\diff \sigma\leq 1   \Bigg\}.
	\end{split}
\end{align}
It can be easily seen that $\|\cdot\|_{1,\mathcal{H}}^{\circ}$ and $\|\cdot\|_{1,\mathcal{H}}^{*}$ are norms on $\WH$. In the next result, we prove that they are both equivalent to the usual one.

\begin{proposition}\label{proposition_equivalent_norm}
	Let hypotheses \eqref{H} and \eqref{H1} be satisfied. Then, $\|\cdot\|_{1,\mathcal{H}}^{\circ}$ and $\|\cdot\|_{1,\mathcal{H}}^{*}$ given in \eqref{norm1} and \eqref{norm_general}, respectively, are both equivalent norms on $\WH$.
\end{proposition}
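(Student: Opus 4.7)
The plan is first to reduce the problem to a single equivalence and to verify that both expressions define bona fide norms. Observe that $\|\cdot\|^{*}_{1,\mathcal{H}}$ is precisely the Luxemburg norm associated to the convex modular
\[
\rho^{*}(u) = \rho_{\mathcal{H}}(\nabla u) + \into\vartheta_1(x)|u|^{\delta_1(x)}\,\diff x + \intor\vartheta_2(x)|u|^{\delta_2(x)}\,\diff\sigma,
\]
while $\|\cdot\|^{\circ}_{1,\mathcal{H}}$ is the sum of the three Luxemburg norms built from the three summands individually. A direct scaling comparison (choosing a common $\tau$ large enough that each modular is at most $1/3$) gives $\|u\|^{*}_{1,\mathcal{H}} \le \|u\|^{\circ}_{1,\mathcal{H}} \le 3\,\|u\|^{*}_{1,\mathcal{H}}$ for every $u\in\WH$, so it suffices to establish the equivalence between $\|\cdot\|^{\circ}_{1,\mathcal{H}}$ and the standard norm $\|\cdot\|_{1,\mathcal{H}}$. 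Homogeneity and the triangle inequality are routine, so only definiteness requires justification: if $\|u\|^{\circ}_{1,\mathcal{H}} = 0$ then $\|\nabla u\|_{\mathcal{H}}=0$ forces $\nabla u = 0$ a.e., hence $u\equiv c$ is constant on the (connected) domain $\Omega$, and the vanishing of the two weighted seminorms yields $\into\vartheta_1(x)|c|^{\delta_1(x)}\,\diff x = 0$ and $\intor\vartheta_2(x)|c|^{\delta_2(x)}\,\diff\sigma = 0$, which together with \eqref{H1iv} force $c=0$.

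For the upper bound $\|u\|^{\circ}_{1,\mathcal{H}} \le C\,\|u\|_{1,\mathcal{H}}$, the gradient term is trivially dominated by $\|u\|_{1,\mathcal{H}}$, so only the two weighted seminorms need attention. Since $\vartheta_1\in\Linf$, the pointwise bound $\vartheta_1(x)\le\|\vartheta_1\|_\infty$ inside the defining modular yields $\|u\|_{\delta_1(\cdot),\vartheta_1}\le C\,\|u\|_{\delta_1(\cdot)}$, and the continuous embedding $\WH\hookrightarrow L^{\delta_1(\cdot)}(\Omega)$---given by Proposition~\ref{proposition_embeddings}(ii) in the critical case thanks to \eqref{H1}(i)(a$_1$), or by Proposition~\ref{proposition_embeddings}(iii) in the subcritical case---provides $\|u\|_{\delta_1(\cdot)}\le C\,\|u\|_{1,\mathcal{H}}$. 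The boundary seminorm is treated identically via Proposition~\ref{proposition_embeddings}(iv)--(v), invoking \eqref{H1}(i)(a$_2$) in the critical boundary case.

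The main obstacle is the reverse estimate $\|u\|_{1,\mathcal{H}}\le C\,\|u\|^{\circ}_{1,\mathcal{H}}$, which I would prove by contradiction. Assume no such $C$ exists; then there is a sequence $\{u_n\}\subset\WH$ with $\|u_n\|_{1,\mathcal{H}} = 1$ and $\|u_n\|^{\circ}_{1,\mathcal{H}}\to 0$. In particular $\|\nabla u_n\|_{\mathcal{H}}\to 0$, and the seminorm-modular relationship coming from Proposition~\ref{properties_norm_modular_r}(v) gives $\into\vartheta_1(x)|u_n|^{\delta_1(x)}\,\diff x \to 0$ as well as $\intor\vartheta_2(x)|u_n|^{\delta_2(x)}\,\diff\sigma \to 0$. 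Since $\{u_n\}$ is bounded in $\WH$, reflexivity together with Proposition~\ref{proposition_embeddings}(viii) provides a subsequence with $u_n\weak u$ in $\WH$ and $u_n\to u$ in $\Lp{\mathcal{H}}$ and a.e.\ in $\Omega$. Because $\nabla u_n\to 0$ in $\Lp{\mathcal{H}}$ and $\nabla$ is weakly continuous, the weak limit satisfies $\nabla u=0$, so $u\equiv c$ is a constant on $\Omega$. Fatou's lemma applied with the a.e.\ convergence $u_n\to c$ then yields $\into\vartheta_1(x)|c|^{\delta_1(x)}\,\diff x = 0$. To handle the boundary integral, I upgrade the convergence: since $u_n\to c$ in $\Lp{\mathcal{H}}\hookrightarrow L^{p(\cdot)}(\Omega)$ and $\nabla u_n\to 0$ in $L^{p(\cdot)}(\Omega)^N$, one has $u_n\to c$ in $W^{1,p(\cdot)}(\Omega)$; continuity of the trace into $L^{p(\cdot)}(\partial\Omega)$ produces a further subsequence converging to $c$ $\sigma$-a.e.\ on $\partial\Omega$, and a second Fatou step gives $\intor\vartheta_2(x)|c|^{\delta_2(x)}\,\diff\sigma = 0$. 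Hypothesis \eqref{H1iv} now forces $c=0$, hence $u_n\to 0$ in $\Lp{\mathcal{H}}$ and $\|u_n\|_{\mathcal{H}}\to 0$; combined with $\|\nabla u_n\|_{\mathcal{H}}\to 0$ this contradicts $\|u_n\|_{1,\mathcal{H}}=1$ and completes the proof. The delicate point, deserving the most care in writing out, is the boundary Fatou step when $\delta_2$ touches $p_*$, where only the continuous trace (not compactness) is available, and one must rely precisely on strong $W^{1,p(\cdot)}$-convergence of $u_n$ to the constant $c$ to pass to the a.e.\ trace convergence.
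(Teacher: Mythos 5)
Your proof is correct and takes essentially the same route as the paper: equivalence of $\|\cdot\|^\circ_{1,\mathcal{H}}$ with the standard norm via the embeddings of Proposition~\ref{proposition_embeddings} in one direction and a compactness/contradiction argument in the other (normalize, extract a weak limit, show it must be a nonzero constant, contradict \eqref{H1iv}), followed by a Luxemburg-norm comparison of $\|\cdot\|^\circ_{1,\mathcal{H}}$ with $\|\cdot\|^*_{1,\mathcal{H}}$. The only real difference is in the limit passage: the paper invokes weak lower semicontinuity of the three seminorms directly, whereas you apply Fatou's lemma to the modulars after upgrading $u_n \to c$ to strong $W^{1,p(\cdot)}(\Omega)$-convergence in order to obtain $\sigma$-a.e.\ trace convergence on $\partial\Omega$; both devices are legitimate.
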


\begin{proof}
	We only prove the result when $\delta_1(x)=p^*(x)$ and $\delta_2(x)=p_*(x)$ for all $x\in\close$, the other cases can be shown in a similar way. So, we suppose that $p \in C(\close) \cap W^{1,\gamma}(\Omega)$ for some $\gamma>N$. Then, by Remark \ref{remark-spaces} we know that $p\in C(\close) \cap C^{0, \frac{1}{|\log t|}}(\close)$ as well.

	First, for $u\in\WH\setminus\{0\}$ we have
	\begin{align*}
		\into \vartheta_1(x) \left(\frac{|u|}{\|u\|_{p^*(\cdot)}}\right)^{p^*(x)}\,\diff x\leq \|\vartheta_1\|_\infty \, \rho_{p^*(\cdot)}\l(\frac{u}{\|u\|_{p^*(\cdot)}}\r)
		=\|\vartheta_1\|_\infty.
	\end{align*}
	Hence,
	\begin{align*}
		\|u\|_{p^*(\cdot),\vartheta_1}\leq \, \|\vartheta_1\|_\infty \|u\|_{p^*(\cdot)}.
	\end{align*}
	In the same way, we show that
	\begin{align*}
		\|u\|_{p_*(\cdot),\vartheta_2,\partial\Omega}\leq \|\vartheta_2\|_{\infty,\partial\Omega} \, \|u\|_{p_*(\cdot),\partial\Omega}.
	\end{align*}
	Using these along with Proposition \ref{proposition_embeddings}\textnormal{(ii)}, \textnormal{(iv)}, we obtain
	\begin{align*}
		\|u\|_{1,\mathcal{H}}^{\circ}
		 & \leq \|\nabla u\|_\mathcal{H}+ C_1\|u\|_{p^*(\cdot)}+C_2\|u\|_{p_*(\cdot),\partial\Omega} \\
		 & \leq \|\nabla u\|_\mathcal{H}+	C_3 \|u\|_{1,\mathcal{H}}+C_4\|u\|_{1,\mathcal{H}} \\
		 & \leq C_5 \|u\|_{1,\mathcal{H}},
	\end{align*}
	for all $u \in \WH$, with positive constants $C_i$, $i=1,\ldots 5$.

	Next, we are going to prove that
	\begin{align}
		\label{embedding_1}
		\|u\|_{\mathcal{H}} \leq C_6 \|u\|_{1,\mathcal{H}}^{\circ},
	\end{align}
	for some $C_6>0$. We argue indirectly and assume that \eqref{embedding_1} does not hold. Then, we find a sequence $\{u_n\}_{n\in\N}\subset \WH$ such that
	\begin{align}
		\label{embedding_2}
		\|u_n\|_{\mathcal{H}} > n\|u_n\|_{1,\mathcal{H}}^{\circ}\quad\text{for all }n\in\N.
	\end{align}
	Let $y_n=\frac{u_n}{\|u_n\|_{\mathcal{H}}}$. Hence, $\|y_n\|_{\mathcal{H}}=1$ and from \eqref{embedding_2} we get
	\begin{align}
		\label{embedding_3}
		\frac{1}{n}>\|y_n\|_{1,\mathcal{H}}^{\circ}.
	\end{align}
	From $\|y_n\|_{\mathcal{H}}=1$ and \eqref{embedding_3}, we know that  $\{y_n\}_{n\in\N}\subset \WH$ is bounded. Therefore, using the embeddings in Proposition \ref{proposition_embeddings}\textnormal{(ii)}, \textnormal{(iv)} and up to a subsequence if necessary, we may assume that
	\begin{align}
		\label{embedding_4}
		y_n\weak y \quad\text{in }\WH\quad\text{and}\quad y_n\weak y \quad\text{ in }\Lp{p^*(\cdot)} \text{ and }\Lprand{p_*(\cdot)}.
	\end{align}
	Furthermore, from \eqref{embedding_4} and Proposition \ref{proposition_embeddings}\textnormal{(viii)}, we conclude that $y_n\to y$ in $\Lp{\mathcal{H}}$ and because of $\|y_n\|_{\mathcal{H}}=1$ we have $y\neq 0$. Passing to the limit in \eqref{embedding_3} as $n\to \infty$ and using \eqref{embedding_4} along with the weak lower semicontinuity of the norm $\|\nabla \cdot\|_{\mathcal{H}}$ and of the seminorms $\|\cdot\|_{p^*(\cdot),\vartheta_1}, \|\cdot \|_{p_*(\cdot),\vartheta_2,\partial\Omega}$ we obtain
	\begin{align}
		\label{embedding_5}
		0 \geq \|\nabla y\|_\mathcal{H}+ \|y\|_{p^*(\cdot),\vartheta_1}+\|y\|_{p_*(\cdot),\vartheta_2,\partial\Omega}.
	\end{align}
	Inequality \eqref{embedding_5} implies that $y\equiv \eta \neq 0$ is a constant and so we have a contradiction
	\begin{align*}
		0\geq |\eta|  \|1\|_{p^*(\cdot),\vartheta_1}+|\eta|\|1\|_{p_*(\cdot),\vartheta_2,\partial\Omega}>0,
	\end{align*}
	because of \eqref{H1}\eqref{H1iv}. Therefore \eqref{embedding_1} holds and we get
	\begin{align*}
		\norm{u}_{1,\mathcal{H}} \le C_7 \norm{u}_{1,\mathcal{H}}^{\circ},
	\end{align*}
	for some $C_7>0$.

	Next, we are going to show that $\|\cdot\|_{1,\mathcal{H}}^{\circ}$ and $\|\cdot\|_{1,\mathcal{H}}^{*}$ are equivalent norms in $\WH$. For $u\in\WH$, we obtain
	\begin{align*}
		& \into \l(\l(\frac{|\nabla u|}{\|u\|_{1,\mathcal{H}}^{\circ}}\r)^{p(x)}+\mu(x)\l(\frac{|\nabla u|}{\|u\|_{1,\mathcal{H}}^{\circ}}\r)^{q(x)}\r)\,\diff x\\
		&+\into \vartheta_1(x) \l(\frac{|u|}{\|u\|_{1,\mathcal{H}}^{\circ}}\r)^{p^*(x)}\,\diff x+\intor \vartheta_2(x)\l(\frac{|u|}{ \|u\|_{1,\mathcal{H}}^{\circ}}\r)^{p_*(x)}\,\diff \sigma\\
		& \leq \rho_{\mathcal{H}}\l(\frac{\nabla u}{\|\nabla u\|_{\mathcal{H}}}\r)+\into \vartheta_1(x)\l(\frac{|u|}{\|u\|_{p^*(\cdot),\theta_1}}\r)^{p^*(x)}\,\diff x\\
		&\quad +\intor \vartheta_2(x)\l(\frac{|u|}{\|u\|_{p_*(\cdot),\theta_2,\partial\Omega}}\r)^{p_*(x)}\,\diff \sigma \\
		& =3.
	\end{align*}
	Thus,
	\begin{align}
		\label{equiv_1}
		\|u\|_{1,\mathcal{H}}^{*}\leq 3 \|u\|_{1,\mathcal{H}}^{\circ}.
	\end{align}
	On the other hand, we have
	\begin{align}\label{embedding_30}
		\begin{split}
			& \into \l(\l(\frac{|\nabla u|}{\|u\|_{1,\mathcal{H}}^{*}}\r)^{p(x)}+\mu(x)\l(\frac{|\nabla u|}{\|u\|_{1,\mathcal{H}}^{*}}\r)^{q(x)}\r)\,\diff x\\
			&+\into \vartheta_1(x) \l(\frac{|u|}{\|u\|_{1,\mathcal{H}}^{*}}\r)^{p^*(x)}\,\diff x +\int_{\partial\Omega}\vartheta_2(x)\l(\frac{|u|}{\|u\|_{1,\mathcal{H}}^{*}}\r)^{p_*(x)}\,\diff \sigma                                                                                              \\
			& \leq \rho_{1,\mathcal{H}}^*\l(\frac{ u}{\|u\|_{1,\mathcal{H}}^{*}}\r),
		\end{split}
	\end{align}
	where $\rho_{1,\mathcal{H}}^*$ is the corresponding modular to $\|\cdot\|_{1,\mathcal{H}}^{*}$ given by
	\begin{align*}
		\rho_{1,\mathcal{H}}^*(u)
		&=\into \l(|\nabla u|^{p(x)}+\mu(x)|\nabla u|^{q(x)}\r)\,\diff x+\into \vartheta_1(x)|u|^{p^*(x)}\,\diff x\\
		&+\int_{\partial\Omega} \vartheta_2(x)|u|^{p_*(x)}\,\diff \sigma.
	\end{align*}
	Note that, for $u \in \Wp{\mathcal{H}}$, the function $\tau \mapsto \rho_{1,\mathcal{H}}^*(\tau u)$ is continuous, convex and even and it is strictly increasing when $\tau \in [0,\infty)$. So, by definition, we directly obtain
	\begin{align*}
		\|u\|_{1,\mathcal{H}}^{*}=\tau \quad\text{if and only if}\quad \rho_{1,\mathcal{H}}^*\l(\frac{u}{\tau}\r)=1.
	\end{align*}
	From this and \eqref{embedding_30} we conclude that
	\begin{align*}
		\|\nabla u\|_\mathcal{H} \leq \|u\|_{1,\mathcal{H}}^{*}, \qquad \|u\|_{p^*(\cdot),\vartheta_1} \leq \|u\|_{1,\mathcal{H}}^{*} \quad \text{ and } \quad \|u\|_{p_*(\cdot),\vartheta_2,\partial\Omega}\leq \|u\|_{1,\mathcal{H}}^{*}.
	\end{align*}
	Therefore,
	\begin{align}
		\label{equiv_2}
		\frac{1}{3} \|u\|_{1,\mathcal{H}}^{\circ}\leq \|u\|_{1,\mathcal{H}}^{*}.
	\end{align}
	From \eqref{equiv_1} and \eqref{equiv_2} the proof is complete.
\end{proof}
Let
\begin{align*}
	r_1 := \min \l\{ p_- , (\delta_1)_-, (\delta_2)_- \r\} \quad \text{ and } \quad  r_2 := \max \l\{ q_+ , (\delta_1)_+, (\delta_2)_+ \r\}.
\end{align*}
In the following proposition we give the relation between the norm $\|\cdot\|_{1,\mathcal{H}}^{*}$ and the related modular function $\rho_{1,\mathcal{H}}^*(\cdot)$. The proof is similar to that one of
Propositions 2.13 and 2.14  given by Crespo-Blanco-Gasi\'{n}ski-Harjulehto-Winkert
in \cite{Crespo-Blanco-Gasinski-Harjulehto-Winkert-2022}.
\begin{proposition}
	\label{properties_modular_norm_general}
	Let hypotheses \eqref{H} and \eqref{H1} be satisfied, $u\in \WH$ and $\lambda\in\R$. Then the following hold:
	\begin{enumerate}
		\item[\textnormal{(i)}]
			If $u\neq 0$, then $\|u\|_{1,\mathcal{H}}^*=\lambda \quad \Longleftrightarrow \quad \rho_{1,\mathcal{H}}^*(\frac{u}{\lambda})=1$;
		\item[\textnormal{(ii)}]
			$\|u\|_{1,\mathcal{H}}^*<1$ (resp.\,$>1$, $=1$) $\quad \Longleftrightarrow \quad \rho_{1,\mathcal{H}}^*(u)<1$ (resp.\,$>1$, $=1$);
		\item[\textnormal{(iii)}]
			If $\|u\|_{1,\mathcal{H}}^*<1 \quad \Longrightarrow \quad \l(\|u\|_{1,\mathcal{H}}^*\r)^{r_2} \leq \rho_{1,\mathcal{H}}^*(u) \leq \l(\|u\|_{1,\mathcal{H}}^*\r)^{r_1}$;
		\item[\textnormal{(iv)}]
			If $\|u\|_{1,\mathcal{H}}^*>1 \quad \Longrightarrow \quad \l(\|u\|_{1,\mathcal{H}}^*\r)^{r_1}\leq \rho_{1,\mathcal{H}}^*(u) \leq \l(\|u\|_{1,\mathcal{H}}^*\r)^{r_2}$;
		\item[\textnormal{(v)}]
			$\|u\|_{1,\mathcal{H}}^*\to 0 \quad \Longleftrightarrow \quad  \rho_{1,\mathcal{H}}^*(u)\to 0$;
		\item[\textnormal{(vi)}]
			$\|u\|_{1,\mathcal{H}}^*\to \infty  \quad \Longleftrightarrow \quad  \rho_{1,\mathcal{H}}^*(u)\to \infty $;
		\item[\textnormal{(vii)}]
			$\|u\|_{1,\mathcal{H}}^*\to 1 \quad \Longleftrightarrow \quad  \rho_{1,\mathcal{H}}^*(u)\to 1$.
	\end{enumerate}
\end{proposition}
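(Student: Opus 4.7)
The plan is to exploit the $N$-function structure of the modular
$$\rho^{*}_{1,\mathcal{H}}(u)=\into\!\l(|\nabla u|^{p(x)}+\mu(x)|\nabla u|^{q(x)}\r)\,\diff x+\into\vartheta_{1}(x)|u|^{\delta_{1}(x)}\,\diff x+\intor\vartheta_{2}(x)|u|^{\delta_{2}(x)}\,\diff\sigma,$$
all of whose integrands are pointwise of the form $|\cdot|^{a(x)}$ with exponent $a(x)\in[r_{1},r_{2}]$. This allows us to treat the proof along the same lines as the classical variable-exponent Lebesgue-space case (compare Proposition \ref{properties_norm_modular_r}), adapted to the fact that several contributions live on $\Omega$ and on $\partial\Omega$ simultaneously.

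First I would establish the basic scaling inequality: for every $u\in\WH$ and every $\tau\geq 0$,
$$\min\{\tau^{r_{1}},\tau^{r_{2}}\}\,\rho^{*}_{1,\mathcal{H}}(u)\leq\rho^{*}_{1,\mathcal{H}}(\tau u)\leq\max\{\tau^{r_{1}},\tau^{r_{2}}\}\,\rho^{*}_{1,\mathcal{H}}(u),$$
which follows pointwise from $r_{1}\leq p(x),q(x),\delta_{1}(x),\delta_{2}(x)\leq r_{2}$ and then by integration over $\Omega$ or $\partial\Omega$. Concretely, for $\tau>1$ one has $\tau^{r_{1}}\rho^{*}_{1,\mathcal{H}}(u)\leq\rho^{*}_{1,\mathcal{H}}(\tau u)\leq\tau^{r_{2}}\rho^{*}_{1,\mathcal{H}}(u)$ and for $0<\tau<1$ the inequalities reverse. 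This is the workhorse identity that will feed every statement (iii)--(vii).

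Next, for $u\neq 0$ I would argue that $\tau\mapsto\rho^{*}_{1,\mathcal{H}}(\tau u)$ is continuous on $[0,\infty)$ (monotone/dominated convergence), strictly increasing (the scaling inequality rules out plateaus), vanishes at $\tau=0$ and tends to $+\infty$ as $\tau\to\infty$. Hence the level set $\{\tau>0:\rho^{*}_{1,\mathcal{H}}(u/\tau)\leq 1\}$ is a half-line $[\lambda_{0},\infty)$ whose endpoint is characterized by $\rho^{*}_{1,\mathcal{H}}(u/\lambda_{0})=1$; this yields (i) at once. Claim (ii) is then obtained by specializing (i) to $\lambda=1$ (the equality case) and by using that $\tau\mapsto\rho^{*}_{1,\mathcal{H}}(u/\tau)$ is strictly decreasing (the strict inequalities). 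To prove (iii), assuming $\|u\|^{*}_{1,\mathcal{H}}<1$, I apply the scaling inequality with $\tau=1/\|u\|^{*}_{1,\mathcal{H}}>1$ to $\rho^{*}_{1,\mathcal{H}}(u/\|u\|^{*}_{1,\mathcal{H}})=1$, getting
$$\l(\tfrac{1}{\|u\|^{*}_{1,\mathcal{H}}}\r)^{r_{1}}\rho^{*}_{1,\mathcal{H}}(u)\leq 1\leq\l(\tfrac{1}{\|u\|^{*}_{1,\mathcal{H}}}\r)^{r_{2}}\rho^{*}_{1,\mathcal{H}}(u),$$
which rearranges to $(\|u\|^{*}_{1,\mathcal{H}})^{r_{2}}\leq\rho^{*}_{1,\mathcal{H}}(u)\leq(\|u\|^{*}_{1,\mathcal{H}})^{r_{1}}$. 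The case $\|u\|^{*}_{1,\mathcal{H}}>1$ in (iv) is analogous with $\tau=1/\|u\|^{*}_{1,\mathcal{H}}<1$.

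Finally, (v) and (vi) follow by sandwiching from the bounds of (iii) and (iv): along any sequence, $\|u_{n}\|^{*}_{1,\mathcal{H}}\to 0$ (resp.\ $\to\infty$) is eventually below $1$ (resp.\ above $1$), so (iii) (resp.\ (iv)) pinches $\rho^{*}_{1,\mathcal{H}}(u_{n})$ between two powers of $\|u_{n}\|^{*}_{1,\mathcal{H}}$ with the same limit, and vice versa. The one subtle point I expect is (vii), because the two-sided bounds of (iii) and (iv) swap on either side of $1$, so a purely monotone squeeze does not work directly; I would handle it by splitting any sequence $\|u_{n}\|^{*}_{1,\mathcal{H}}\to 1$ into its subsequences staying below, equal to, and above $1$, applying (iii) on the first, (ii) on the second, and (iv) on the third, obtaining $\rho^{*}_{1,\mathcal{H}}(u_{n})\to 1$ in all cases; the reverse implication is treated identically using that each of (iii), (iv) can be inverted to bound $\|u_{n}\|^{*}_{1,\mathcal{H}}$ in terms of $\rho^{*}_{1,\mathcal{H}}(u_{n})^{1/r_{1}}$ and $\rho^{*}_{1,\mathcal{H}}(u_{n})^{1/r_{2}}$. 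This case-splitting near the threshold $1$ is the only place where care is needed; everything else is a direct consequence of the single scaling inequality above together with the continuity/monotonicity of $\tau\mapsto\rho^{*}_{1,\mathcal{H}}(\tau u)$.
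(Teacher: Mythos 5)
Your proposal is correct, and it follows the standard modular-versus-norm argument for variable-exponent and Musielak--Orlicz settings based on the pointwise scaling inequality $\min\{\tau^{r_1},\tau^{r_2}\}\rho^*_{1,\mathcal{H}}(u)\le\rho^*_{1,\mathcal{H}}(\tau u)\le\max\{\tau^{r_1},\tau^{r_2}\}\rho^*_{1,\mathcal{H}}(u)$. This is essentially the same route as the paper, which does not spell out a proof but simply refers to the analogous Propositions 2.13 and 2.14 of Crespo-Blanco--Gasi\'nski--Harjulehto--Winkert, where precisely this scaling/monotonicity argument is used.
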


Finally, denote by $B\colon \WH \to \WH^*$ the nonlinear operator defined pointwise by
\begin{align*}
	\lan B(u), v\ran =
	& \into \l( |\nabla u|^{p(x)-2}\nabla u+ \mu(x)|\nabla u|^{q(x)-2}\nabla u\r)\cdot \nabla v  \, \diff x \\
	& + \into \vartheta_1 |u|^{\delta_1(x)-2} u v \, \diff x + \intor \vartheta_2|u|^{\delta_2(x)-2} u v \, \diff \sigma,
\end{align*}
for all $u, v \in \WH$. Arguing as in the in proof of Propositions 3.4 and 3.5 in \cite{Crespo-Blanco-Gasinski-Harjulehto-Winkert-2022}, we  have the following the properties.
\begin{proposition}
	\label{properties_general_operator_double_phase}
	Let hypotheses \eqref{H} and \eqref{H1} be satisfied. Then, the operator $B\colon \WH$ $\to \WH^*$ is bounded, continuous and strictly monotone. If, in addition, $1< \delta_1(x), \delta_2(x)$ for all $x\in\close$, then $B$ is coercive, a homeomorphism and of type $(\Ss_+)$.
\end{proposition}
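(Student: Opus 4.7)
The plan is to adapt the arguments used for the standard double-phase operator $A$ in Propositions 3.4 and 3.5 of \cite{Crespo-Blanco-Gasinski-Harjulehto-Winkert-2022}, since $B$ differs from $A$ only in that the lower-order pairings use the weighted $\delta_j(\cdot)$-powers instead of $p(\cdot)$-powers; all four integrands in $\lan B(u),v\ran$ share the structural shape $|a|^{r(x)-2}a\cdot b$ with $r>1$, so the standard toolbox (the variable-exponent H\"older inequality, Proposition \ref{properties_norm_modular_r}, the embeddings in Proposition \ref{proposition_embeddings}, and the pointwise strict monotonicity of $a\mapsto |a|^{r-2}a$) applies term by term. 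For boundedness, I would apply H\"older to each integral: the two gradient pieces are handled exactly as in the classical double-phase case, while for the $\vartheta_1$-integral one estimates
\begin{align*}
\left|\into\vartheta_1|u|^{\delta_1(x)-2}uv\,\diff x\right|
\le 2\norm{\vartheta_1}_\infty\bigl\|\,|u|^{\delta_1(\cdot)-1}\bigr\|_{\delta_1'(\cdot)}\norm{v}_{\delta_1(\cdot)},
\end{align*}
and controls the first factor by a power of $\rho_{\delta_1(\cdot)}(u)$ via Proposition \ref{properties_norm_modular_r}, which in turn is bounded by a power of $\norm{u}_{1,\mathcal{H}}$ by the continuous embedding $\WH\hookrightarrow \Lp{\delta_1(\cdot)}$ from Proposition \ref{proposition_embeddings}\textnormal{(ii)}. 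The boundary integral is treated analogously via Proposition \ref{proposition_embeddings}\textnormal{(iv)}.

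For continuity, given $u_n\to u$ in $\WH$, I would extract a subsequence along which $u_n$ and $\nabla u_n$ converge pointwise a.e., and use that the four Nemytskij operators $u\mapsto|\nabla u|^{r(x)-2}\nabla u$ and $u\mapsto \vartheta_j|u|^{\delta_j(x)-2}u$ send $L^{r(\cdot)}$-convergent sequences into $L^{r'(\cdot)}$-convergent ones (Vitali, combined with the dominating powers supplied by Proposition \ref{proposition_embeddings}); H\"older then yields $\lan B(u_n)-B(u),v\ran\to 0$ uniformly in $\norm{v}_{1,\mathcal{H}}\le 1$, and a Urysohn subsequence argument promotes convergence to the full sequence. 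For strict monotonicity, I apply the elementary inequality
\begin{align*}
(|a|^{r-2}a-|b|^{r-2}b)\cdot(a-b)\ge 0,\qquad\text{with equality iff }a=b,
\end{align*}
pointwise with $r=p(x),q(x),\delta_1(x),\delta_2(x)$; this makes the four summands of $\lan B(u)-B(v),u-v\ran$ nonnegative, and if the sum vanishes then $\nabla u=\nabla v$ a.e.\ in $\Omega$, so $u-v$ is constant on each connected component of $\Omega$. Assumption \eqref{H1}\textnormal{(iv)} then forces this constant to be zero on each component and hence $u=v$ in $\WH$.

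For the $(\Ss_+)$-property, assume $u_n\weak u$ in $\WH$ with $\limsup_{n\to\infty}\lan B(u_n),u_n-u\ran\le 0$. Weak convergence gives $\lan B(u),u_n-u\ran\to 0$, and combined with monotonicity this yields $\lan B(u_n)-B(u),u_n-u\ran\to 0$; each of the four nonnegative integrands comprising this pairing therefore tends to zero in $L^1(\Omega)$ or $L^1(\partial\Omega)$, respectively. The main obstacle is upgrading this to $\rho_{1,\mathcal{H}}^*(u_n-u)\to 0$: the $\vartheta_1$- and $\vartheta_2$-pieces follow directly from Simon-type inequalities applied pointwise, but for the $p(\cdot)$- and $\mu(\cdot)q(\cdot)$-gradient pieces one must split $\Omega$ into the regions where the exponent is $\ge 2$ and $<2$ and apply two different Simon-type estimates, as in Proposition 3.5 of \cite{Crespo-Blanco-Gasinski-Harjulehto-Winkert-2022}. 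Once $\rho_{1,\mathcal{H}}^*(u_n-u)\to 0$ is established, Proposition \ref{properties_modular_norm_general}\textnormal{(v)} gives $\norm{u_n-u}_{1,\mathcal{H}}^*\to 0$, and the norm equivalence of Proposition \ref{proposition_equivalent_norm} delivers $u_n\to u$ in $\WH$.
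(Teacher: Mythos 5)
Your overall plan is sound and reaches the conclusion, but for the $(\Ss_+)$-property it takes a genuinely different route from the paper, and the comparison is worth spelling out. The paper does \emph{not} push Simon-type inequalities through all four modular pieces. Instead it proceeds in two stages: first it uses the vanishing of the $p(\cdot)$-, $\vartheta_1$- and $\vartheta_2$-pairings (together with a claim borrowed from the proof of Theorem~3.3 of \cite{Crespo-Blanco-Gasinski-Harjulehto-Winkert-2022}) to deduce \emph{strong} convergence $\nabla u_n\to\nabla u$ in $L^{p(\cdot)}(\Omega)$, $u_n\to u$ in $L^{\delta_1(\cdot)}_{\vartheta_1}(\Omega)$ and in $L^{\delta_2(\cdot)}_{\vartheta_2}(\partial\Omega)$; this gives convergence in measure. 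In a second stage it proves convergence of the individual modular \emph{levels} (i.e.\ $\int_\Omega(|\nabla u_n|^{p(x)}/p(x)+\mu|\nabla u_n|^{q(x)}/q(x))\,\diff x\to\int_\Omega(\cdots)\,\diff x$, and similarly for the lower-order terms), then invokes the \emph{converse of Vitali's theorem} to conclude uniform integrability of the difference sequences $|\nabla u_n-\nabla u|^{p(x)}+\mu|\nabla u_n-\nabla u|^{q(x)}$, $\vartheta_1|u_n-u|^{\delta_1(x)}$, $\vartheta_2|u_n-u|^{\delta_2(x)}$, which yields $\rho_{1,\mathcal{H}}^*(u_n-u)\to 0$. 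Your variant replaces the second stage by a direct Simon/H\"older argument for each of the four pieces, splitting $\Omega$ (and $\partial\Omega$) into regions where the corresponding exponent is $\ge 2$ or $<2$. That works, but it buys you more bookkeeping: the $\mu(\cdot)q(\cdot)$-piece keeps the weight $\mu$, so on the $\{q(x)<2\}$ region the H\"older step has to be done in a weighted Lebesgue space, with a variable H\"older exponent $2/q(x)$. The paper's Vitali route sidesteps precisely this weighted-$q$-piece, which is why it is used there.

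Two smaller remarks. First, the blanket assertion ``$|a|^{r(x)-2}a\cdot b$ with $r>1$'' is not quite what \eqref{H1} gives you: $\delta_j(x)=1$ is permitted, and then $t\mapsto|t|^{\delta_j-2}t=\operatorname{sign}t$ is monotone but not strictly so, which would bite in your strict-monotonicity step if you tried to use the lower-order terms alone; the argument you then give via $\nabla u=\nabla v$ and \eqref{H1}\eqref{H1iv} is the right fix, but you should be explicit that the pointwise equality-iff clause is only needed (and only valid) for the gradient part and for those $x$ where $\delta_j(x)>1$. (The paper dodges this entirely by only writing out the case $\delta_1=p^*$, $\delta_2=p_*$, both $>1$.) Second, for the constant to vanish in the strict-monotonicity argument one should be explicit that $\Omega$ is taken connected, as \eqref{H1}\eqref{H1iv} is a global assumption and would not control a constant on a component disjoint from the supports of $\vartheta_1,\vartheta_2$.
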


\begin{proof}
	As in the proof of Proposition \ref{proposition_equivalent_norm} we only consider the case when $\delta_1(x)=p^*(x)$ and $\delta_2(x)=p_*(x)$ for all $x\in\close$. Similarly to the proof of Theorem 3.3 in \cite{Crespo-Blanco-Gasinski-Harjulehto-Winkert-2022}, we can show that $B$ is bounded, continuous and strictly monotone. Let us only show the proof for the $(\Ss_+)$-property. To this end, let $\{u_n\}_{n \in \N} \subseteq W^{1,\mathcal{H}}(\Omega)$ be a sequence such that
	\begin{align}\label{assumption_splus}
		u_n \weak u
		\quad\text{in }\WH
		\quad \text{and}\quad
		\limsup_{n \to \infty}\,\left\langle B(u_n), u_n - u \right\rangle \leq 0.
	\end{align}
	From Proposition \ref{proposition_embeddings}(ii) and (iv) we know, up to a subsequence if necessary, that
	\begin{align}\label{weak-convergence}
		u_n\weak u \quad\text{in }L^{p^*(\cdot)}(\Omega)
		\quad\text{and}\quad
		u_n\weak u \quad\text{in }L^{p_*(\cdot)}(\partial\Omega).
	\end{align}
	The strict monotonicity of $B$ implies that
	\begin{align*}
		\lim_{n \to \infty} \left\langle B(u_n) - B(u) , u_n - u \right\rangle
		= 0=\lim_{n \to \infty} \left\langle B(u) , u_n - u \right\rangle.
	\end{align*}
	This yields
	\begin{align*}
		&\lim_{n \to \infty} \into
		\left( |\nabla u_n|^{p(x)-2} \nabla u_n - |\nabla u|^{p(x)-2} \nabla u \right) \cdot \left( \nabla u_n - \nabla u \right) \,\diff x = 0,\\
		&\lim_{n \to \infty} \into \vartheta_1(x)
		\left( |u_n|^{p^*(x)-2} u_n - |u|^{p^*(x)-2} u \right) (u_n - u) \,\diff x = 0,\\
		&\lim_{n \to \infty} \intor \vartheta_2(x)
		\left( |u_n|^{p_*(x)-2} u_n - |u|^{p_*(x)-2} u \right) (u_n - u) \,\diff \sigma = 0.
	\end{align*}
	Then, in the same way as the claim in \cite[Proof of Theorem 3.3, after (3.2)]{Crespo-Blanco-Gasinski-Harjulehto-Winkert-2022}, taking \eqref{weak-convergence} into account, we can show that
	\begin{align}\label{strong-convergence}
		\begin{split}
			\nabla u_n &\to \nabla u \quad\text{in } L^{p(\cdot)}(\Omega),\\
			u_n&\to u \quad\text{in }L^{p^*(\cdot)}_{\vartheta_1}(\Omega),\\
			u_n&\to u \quad\text{in }L^{p_*(\cdot)}_{\vartheta_2}(\partial\Omega).
		\end{split}
	\end{align}

	From \eqref{strong-convergence} we know that
	\begin{align}\label{strong-convergence2}
		\begin{split}
			\nabla u_n &\to \nabla u \quad\text{in measure in } \Omega,\\
			\vartheta_1(x)^{\frac{1}{p^*(x)}}u_n&\to \vartheta_1(x)^{\frac{1}{p^*(x)}}u \quad\text{in measure in }\Omega,\\
			\vartheta_2(x)^{\frac{1}{p_*(x)}}u_n&\to \vartheta_2(x)^{\frac{1}{p_*(x)}}u \quad\text{in measure in }\partial\Omega.
		\end{split}
	\end{align}

	Note that if $a_n, b_n\geq 0$ for all $n\in\N$, we have
	\begin{align}\label{limsup-property}
		\limsup_{n\to\infty} a_n \leq \limsup_{n\to\infty} (a_n+b_n).
	\end{align}
	Therefore, from \eqref{limsup-property}, the $\limsup$-condition in \eqref{assumption_splus} in the shape
	\begin{align*}
		\limsup_{n \to \infty} \left\langle B(u_n) - B(u) , u_n - u \right\rangle
		\leq 0
	\end{align*}
	and the weak convergence of \eqref{assumption_splus} as well as the embeddings $\WH\hookrightarrow L^{p^*(\cdot)}_{\vartheta_1}(\Omega)$, $\WH\hookrightarrow L^{p_*(\cdot)}_{\vartheta_2}(\partial\Omega)$, we obtain that
	\begin{align*}
		&\limsup_{n\to\infty} \into
		\left( |\nabla u_n|^{p(x)-2} \nabla u_n +\mu(x) |\nabla u_n|^{q(x)-2} \nabla u_n \right) \cdot (\nabla u_n - \nabla u)  \,\diff x \leq 0,\\
		&\limsup_{n \to \infty} \into \vartheta_1(x)
		|u_n|^{p^*(x)-2} u_n (u_n - u) \,\diff x \leq  0,\\
		&\limsup_{n \to \infty} \intor \vartheta_2(x)
		|u_n|^{p_*(x)-2} u_n (u_n - u) \,\diff \sigma \leq 0.
	\end{align*}
	Arguing as in \cite[(3.8), (3.9) and (3.10)]{Crespo-Blanco-Gasinski-Harjulehto-Winkert-2022} it can be shown that
	\begin{align}\label{proof_4}
		\begin{split}
			&\lim_{n \to \infty} \int_\Omega \l(\frac{|\nabla u_n|^{p(x)}}{p(x)} + \mu(x) \frac{|\nabla u_n|^{q(x)}}{q(x)}\r) \diff x\\
			&= \int_\Omega \l(\frac{|\nabla u|^{p(x)}}{p(x)} + \mu(x) \frac{|\nabla u|^{q(x)}}{q(x)}\r) \diff x,\\
			&\lim_{n \to \infty} \int_\Omega \vartheta_1(x)|u_n|^{p^*(x)}\,\diff x= \int_\Omega \vartheta_1(x)| u|^{p^*(x)}\,\diff x,\\
			&\lim_{n \to \infty} \intor \vartheta_2(x)|u_n|^{p_*(x)}\,\diff \sigma= \intor \vartheta_2(x)| u|^{p_*(x)}\,\diff \sigma.
		\end{split}
	\end{align}
	Due to \eqref{strong-convergence2}, the left-hand sides of \eqref{proof_4} converge in measure to those on the right-hand sides. Then, the converse of Vitali's theorem implies the uniform integrability of the sequences of functions
	\begin{align*}
		&\l\{\frac{|\nabla u_n|^{p(x)}}{p(x)}+ \mu(x) \frac{|\nabla u_n|^{q(x)}}{q(x)}\r\}_{n\in\N},\\
		&\l\{\vartheta_1(x)\frac{|u_n|^{p^*(x)}}{p^*(x)}\r\}_{n\in\N},\quad
		\l\{\vartheta_2(x)\frac{|u_n|^{p_*(x)}}{p_*(x)}\r\}_{n\in\N}.
	\end{align*}
	But then the sequences
	\begin{align*}
		&A_n:=\l\{|\nabla u_n-\nabla u|^{p(x)}+\mu(x)|\nabla u_n-\nabla u|^{q(x)}\r\}_{n\in\N},\\
		&B_n:=\l\{\vartheta_1(x)|u_n- u|^{p^*(x)}\r\}_{n\in\N},
		\quad C_n:=\l\{\vartheta_2(x)|u_n- u|^{p_*(x)}\r\}_{n\in\N},
	\end{align*}
	are uniformly integrable. This gives
	\begin{align*}
		0=\lim_{n \to \infty}\into A_n\,\diff x
		=\lim_{n \to \infty}\into B_n\,\diff x
		=\lim_{n \to \infty}\intor C_n\,\diff \sigma,
	\end{align*}
	which implies that
	\begin{align*}
		& \lim_{n \to \infty} \rho_{1,\mathcal{H}}^*( u_n - u )\\
		&=
		\lim_{n\to\infty} \left( \into \l(|\nabla u_n-\nabla u|^{p(x)}+\mu(x)|\nabla u_n-\nabla u|^{q(x)}\r)\,\diff x\right.\\
		&\left.\qquad \qquad+\into \vartheta_1(x)|u_n-u|^{p^*(x)}\,\diff x+\int_{\partial\Omega} \vartheta_2(x)|u_n-u|^{p_*(x)}\,\diff \sigma\right)=0.
	\end{align*}
	But this is equivalent to $\norm{u_n - u}_{1,\mathcal{H}} \to 0$, see Proposition \ref{properties_modular_norm_general} \textnormal{(v)}. Thus, $u_n\to u$ in $\WH$.
\end{proof}

\section{Bounded solutions}\label{Section:bounded solutions}
In this section we give a result about the boundedness in the $L^\infty$-norm of the solutions of \eqref{problem}. We state the theorem in a more general and more natural setting than in \eqref{problem} and even allow a gradient dependency on the nonlinearity at the  right-hand side in the domain. For this purpose we need the following assumptions.

\begin{enumerate}[label=\textnormal{(H$_\infty$)},ref=\textnormal{H$_\infty$}]
	\item\label{H_infty}
	Let $\mathcal{A} \colon \Omega \times \R \times \R^N \to \R^N$ and $\mathcal{B} \colon \Omega \times \R \times \R^N \to \R$ be Carath\'eodory functions and assume that there exist constants $a_1, a_2, a_3, b > 0$ and $r \in C_+(\close)$ with $q(x) < r(x) < p^*(x)$ for all $x \in \close$ such that
	\begin{align*}
		|\mathcal{A}(x,t,\xi)|
		& \leq a_1 \left[ |t|^{\frac{r(x)}{p'(x)}} + |\xi|^{p(x)-1} + \mu(x) |\xi|^{q(x)-1} + 1 \right],\\
		\mathcal{A}(x,t,\xi) \cdot \xi
		& \geq a_2 \left[ |\xi|^{p(x)} + \mu(x) |\xi|^{q(x)} \right] - a_3 \left[ |t|^{r(x)} + 1 \right], \\
		|\mathcal{B}(x,t,\xi)|
		& \leq b \left[ |\xi|^{\frac{p(x)}{r'(x)}} + |t|^{r(x) - 1} + 1 \right],
	\end{align*}
	for a.a.\,$x \in \Omega$ and for all $(t,\xi) \in \R \times \R^N$. Furthermore, let $\mathcal{C} \colon \partial \Omega \times \R \to \R$ be also a Carath\'eodory function, $c>0$ and $l \in C_+(\close)$ with $p(x) < l(x) < p_*(x)$ for all $x \in \close$ such that
	\begin{align*}
		| \mathcal{C} (x,t) |
		\leq c \left[ |t|^{l  (x) - 1} + 1 \right],
	\end{align*}
	for a.a.\,$x \in \partial\Omega$ and for all $t \in \R$.
\end{enumerate}

We consider the problem
\begin{equation}\label{problem-Linfty}
	\begin{aligned}
		-\operatorname{div} \mathcal{A}(x,u,\nabla u)
		 & = \mathcal{B}(x,u,\nabla u) \quad &  & \text{in } \Omega,\\
		\mathcal{A}(x,u,\nabla u) \cdot \nu
		& = \mathcal{C} (x,u) &  & \text{on } \partial\Omega,
	\end{aligned}
\end{equation}
already presented in the Introduction, see \eqref{pr}. We say that $u \in \WH$ is a weak solution of \eqref{problem-Linfty} if for all $v \in \WH$ it holds that
\begin{align*}
	\into \mathcal{A}(x,u,\nabla u) \cdot \nabla v \,\diff x
	= \into \mathcal{B}(x,u,\nabla u) v \,\diff x + \intor \mathcal{C} (x,u) v \,\diff \sigma.
\end{align*}

Following Theorem 4.3 due to Ho-Winkert \cite{Ho-Winkert-2022}, we obtain a priori $L^\infty$-estimates for the problem \eqref{problem-Linfty}.

\begin{theorem}
	\label{bounded-solutions}
	Let hypotheses \eqref{H} and \eqref{H_infty} be satisfied and let $u \in \WH$ be a weak solution of problem \eqref{problem-Linfty}. Then, $u \in \Lp{\infty}\cap L^\infty(\partial\Omega)$ and
	\begin{align*}
		\norm{u}_{\infty}+\norm{u}_{\infty,\partial\Omega} \leq C \max \left\lbrace \norm{u}_{r(\cdot)}^{\tau_1} , \norm{u}_{r(\cdot)}^{\tau_2}, \norm{u}_{l(\cdot),\partial \Omega}^{\tau_1} , \norm{u}_{l(\cdot),\partial \Omega}^{\tau_2} \right\rbrace,
	\end{align*}
	where $C, \tau_1,\tau_2 > 0$ are independent of $u$.
\end{theorem}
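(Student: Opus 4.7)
The plan is to adapt the De Giorgi--Moser iteration carried out in Ho--Winkert \cite{Ho-Winkert-2022} for the Dirichlet problem, with the new twist that one must also handle the nonlinear boundary datum $\mathcal{C}(x,u)$. I would treat $u^+$ and $u^-$ separately and, by symmetry, focus on $u^+$. For each real level $\kappa \geq \kappa_0 > 0$ (where $\kappa_0$ is to be determined), the test function in the weak formulation will be $v = (u-\kappa)^+$, and the superlevel sets $A_\kappa = \{x \in \Omega : u(x) > \kappa\}$ and $\Gamma_\kappa = \{x \in \partial\Omega : u(x) > \kappa\}$ will drive the iteration.

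Inserting $v = (u-\kappa)^+$ into the weak formulation and using the coercivity of $\mathcal{A}$ on the left and the growth bounds on $\mathcal{B}$ and $\mathcal{C}$ on the right (all from \eqref{H_infty}), I would arrive, after a Young inequality with variable exponents to absorb the $|\nabla (u-\kappa)^+|^{p(x)}$-piece coming from $\mathcal{B}$ back into the principal term, at an inequality of the schematic form
\begin{align*}
\into \Bigl(|\nabla (u-\kappa)^+|^{p(x)} + \mu(x)|\nabla (u-\kappa)^+|^{q(x)}\Bigr) \diff x
\leq C \int_{A_\kappa}\bigl(u^{r(x)} + 1\bigr)\diff x + C\int_{\Gamma_\kappa}\bigl(u^{l(x)} + 1\bigr)\diff \sigma.
\end{align*}
Using Proposition \ref{proposition_embeddings}(ii),(iv) to embed $\WH \hookrightarrow L^{p^*(\cdot)}(\Omega) \cap L^{p_*(\cdot)}(\partial\Omega)$, combined with the modular/norm relations of Proposition \ref{properties_modular_norm_general} applied to the new equivalent norm of Proposition \ref{proposition_equivalent_norm}, the gradient modular on the left can be converted into Lebesgue norms of $(u-\kappa)^+$ on $\Omega$ and $\partial\Omega$.

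Next, I would set $\kappa_n = \kappa(2 - 2^{-n})$ and $w_n = (u - \kappa_n)^+$, so $w_{n+1} \leq w_n$ and $w_n \geq (\kappa_{n+1} - \kappa_n)\chi_{A_{\kappa_{n+1}}}$ on $A_{\kappa_{n+1}}$. Applying Hölder's inequality with variable exponents (using the strict gaps $q(x) < r(x) < p^*(x)$ and $p(x) < l(x) < p_*(x)$) to separate the right-hand side into a Lebesgue norm of $w_n$ on the relevant superlevel set times a power of its measure, and then using $|A_{\kappa_{n+1}}| \leq (\kappa_{n+1} - \kappa_n)^{-s} \|w_n\|_{r(\cdot)}^{s}$ (and similarly on $\partial\Omega$), I expect to obtain a recursion
\begin{align*}
Y_{n+1} \leq C\, 2^{\beta n} \max\bigl\{Y_n^{1+\theta_1},\,Y_n^{1+\theta_2}\bigr\},
\end{align*}
where $Y_n := \|w_n\|_{r(\cdot)} + \|w_n\|_{l(\cdot),\partial\Omega}$ and $\theta_1, \theta_2 > 0$ are determined by the gaps between $r(\cdot)$ and $p^*(\cdot)$ and between $l(\cdot)$ and $p_*(\cdot)$. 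A standard nonlinear iteration lemma then forces $Y_n \to 0$ provided $Y_0$ is below an explicit threshold, and this threshold translates into a smallness condition on $\kappa$ of the form $\kappa \geq C \max\{\|u\|_{r(\cdot)}^{\tau_1}, \|u\|_{r(\cdot)}^{\tau_2}, \|u\|_{l(\cdot),\partial\Omega}^{\tau_1}, \|u\|_{l(\cdot),\partial\Omega}^{\tau_2}\}$. Since $Y_n \to 0$ yields $(u - 2\kappa)^+ \equiv 0$, this is precisely the bound claimed.

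The main obstacle will be the careful bookkeeping of the variable exponents: because Proposition \ref{properties_norm_modular_r} forces two different inequalities depending on whether a Luxemburg norm is below or above $1$, each modular-to-norm conversion produces two alternatives, which is exactly why the final estimate must be phrased as a maximum over the four quantities $\|u\|_{r(\cdot)}^{\tau_j}$ and $\|u\|_{l(\cdot),\partial\Omega}^{\tau_j}$. The additional difficulty compared with the Dirichlet version in \cite{Ho-Winkert-2022} is that the boundary integral $\int_{\Gamma_\kappa}\mathcal{C}(x,u)\,(u-\kappa)^+\,\diff\sigma$ must be simultaneously fed through Hölder's inequality and the boundary trace embedding, so that a single iteration step simultaneously decreases both the $L^{r(\cdot)}(\Omega)$- and $L^{l(\cdot)}(\partial\Omega)$-norm of $w_n$; this is why the iteration variable $Y_n$ must be taken to combine both.
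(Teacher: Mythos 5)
Your proposal is in the same family as the paper's argument: a De Giorgi--type level-set iteration in which one tests the weak formulation with $(u-\kappa)^+$, passes through the coercivity of $\mathcal{A}$ and the growth bounds on $\mathcal{B}$ and $\mathcal{C}$, obtains a recursive inequality between quantities associated with the truncations at $\kappa_n = \kappa(2-2^{-n})$, and closes the iteration. In that sense the strategy is right, and the treatment of the boundary term via the trace embedding is the correct new ingredient compared to the Dirichlet case.

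However, there is a genuine gap. The paper does not run the iteration globally as you describe: following Ho--Winkert \cite{Ho-Winkert-2022}, it first partitions $\Omega$ into finitely many subdomains $\Omega_i$ on which the variable exponents have small oscillation, and then works with near-constant exponents on each piece, using the local embeddings
$W^{1,p(\cdot)}(\Omega_i)\hookrightarrow W^{1,(p_i)_-}(\Omega_i)\hookrightarrow L^{r_i^\star+\eps}(\Omega_i)$
(and the analogous boundary embedding). This localization is not an optional refinement but the device that makes the iteration close: when you convert a modular such as $\int |\nabla w_{n+1}|^{p(x)}\,\diff x$ into a Luxemburg norm and then pass through the Sobolev embedding and H\"older, Proposition \ref{properties_norm_modular_r}(iii)--(iv) forces you to pick up powers at $p_-$ or $p_+$ (and $r_-$, $r_+$, etc.), and if the global oscillation $p_+-p_-$ is not small these power losses can swamp the gain coming from the pointwise gap $r(x)<p^*(x)$. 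You acknowledge the ``two alternatives'' issue at the end, but you do not explain how to close the recursion with a strictly positive superlinear exponent in the presence of these losses; that is precisely what the domain decomposition is for, and it is the ingredient your plan is missing. Also, the quantities driving the iteration in the paper are modulars $Z_n=\int_{A_{\kappa_n}}(u-\kappa_n)^{r(x)}\,\diff x$ and $Y_n=\int_{\Gamma_{\kappa_n}}(u-\kappa_n)^{l(x)}\,\diff\sigma$, not sums of Luxemburg norms; this choice is what keeps the variable-exponent bookkeeping manageable. Finally, invoking Proposition \ref{proposition_equivalent_norm} and Proposition \ref{properties_modular_norm_general} is not relevant here: the boundedness argument only uses the embeddings of Proposition \ref{proposition_embeddings} (and their localized variants), not the weighted equivalent norm.
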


\begin{proof}
	We base our arguments on the proof of \cite[Theorem 4.3]{Ho-Winkert-2022} introducing the following changes.

	First, take
	\begin{align*}
		\Psi (x,t)& = t^{r(x)} \quad \text{for all } (x,t) \in \close \times [0,\infty), \\
		Z_n & = \int_{A_{\kappa_n}} (u - \kappa_n)^{r(x)} \,\diff x, \\
		\Upsilon (x,t) & = t^{l(x)} \quad \text{for all } (x,t) \in \close \times [0,\infty), \\
		Y_n & = \int_{\Gamma_{\kappa_n}} (u - \kappa_n)^{l(x)} \,\diff x,
	\end{align*}
	instead of the definitions given there. Then the Step 1 works exactly the same except for (4.9), which now is true because $q(x) \leq r(x)$ for all $x \in \close$ and
	\begin{align*}
		& \int_{A_{\kappa_{n+1}}} \left[ (u - \kappa_{n+1})^{p(x)} + \mu(x) (u - \kappa_{n+1})^{q(x)} \right]\, \diff x \\
		&\leq \int_{A_{\kappa_{n+1}}} \left[ (u - \kappa_{n+1})^{r(x)} + 1 + ||\mu||_{\infty}(u - \kappa_{n+1})^{r(x)} + ||\mu||_{\infty} \right] \,\diff x.
	\end{align*}
	Later, we take
	\begin{align*}
		T_{n,i} (\alpha) & = \int_{\Omega_i} v_n^\alpha \,\diff x \quad \text{for all } i \in \{1, \ldots , m\},\alpha > 0, \\
		H_{n,i} (\alpha) & = \int_{\Omega_i} v_n^\alpha\, \diff x \quad \text{for all } i \in I,\alpha > 0,
	\end{align*}
	skip the $\psi_\star$ and $\Phi_\star$ parts and then use only the embeddings
	\begin{equation}
		\label{embeddings-Linfty}
		\begin{aligned}
			 & W^{1,p(\cdot)} (\Omega_i) \hookrightarrow W^{1,(p_i)_-} (\Omega_i) \hookrightarrow L^{r_i^\star + \eps} (\Omega_i),\\
			 & W^{1,p(\cdot)} (\Omega_i) \hookrightarrow W^{1,(p_i)_-} (\Omega_i) \hookrightarrow L^{l_i^\star + \eps} (\partial \Omega_i).
		\end{aligned}
	\end{equation}
	Then one can complete Step 2 with a completely analogous argument. We finish the proof by repeating exactly the treatment of Step 3.
\end{proof}

\begin{remark}
Let us emphasize that Theorem \ref{bounded-solutions} holds under the weaker hypothesis on the exponents given in \eqref{H_infty} instead of the much more restrictive assumptions needed in \cite[Theorem 4.3]{Ho-Winkert-2022}. The reason behind this relies on the less general growth condition we
require on the main operators, so we only need to use the embeddings \eqref{embeddings-Linfty} instead of the other stronger and sharper embeddings used in (4.19) and (4.49) of \cite{Ho-Winkert-2022} and for which the authors require the aforementioned stronger hypothesis on the exponents.
\end{remark}

\section{Constant sign solutions}\label{Section:constant sign}
In this section we establish existence of two constant sign solutions obtained through Theorem \ref{mountain_pass_theorem}. In particular, one solution turns out to be nonnegative and the other one to be nonpositive. First, we have to strengthen the hypotheses \eqref{H} as follows:
\begin{enumerate}[label=\textnormal{(H2)},ref=\textnormal{H2}]
	\item\label{H2}
	$p,q \in C(\close)$ such that $1<p(x)<N$ and $p(x)<q(x)<(p_-)_*$ for all $x \in \close$ and $\mu\in L^\infty(\Omega)$ with $\mu(x)\geq0$ for a.a.\,$x\in\Omega$.
\end{enumerate}
Next, we state the required assumptions on the nonlinearities:
\begin{enumerate}[label=\textnormal{(H$_{f,g}$)},ref=\textnormal{H$_{f,g}$}]
	\item\label{H_{f,g}}
		Let $f\colon\Omega \times \R\to\R$ and $g\colon\partial\Omega \times \R\to\R$ be Carath\'eodory functions and $F(x,t)=\int_0^t f(x,s) \,\diff s$ and $G(x,t)=\int_0^t g(x,s) \,\diff s$ be such that the following hold:
		\begin{enumerate}[itemsep=0.2cm,label=\textnormal{(h$_{\arabic*}$)},ref=\textnormal{h$_{\arabic*}$}]
			\item\label{h_1}
				there exist $\ell, \kappa \in C_+(\close)$ and $K_1,K_2>0$ with $\ell_+<(p_-)^*$ and $\kappa_+<(p_-)_*$ such that
				\begin{align*}
					|f(x,t)| \leq K_1\l(1+|t|^{\ell(x) -1}\r) \quad
					& \text{for a.a.\,}x\in\Omega,\\
					|g(x,t)| \leq K_2\l(1+|t|^{\kappa(x) -1}\r) \quad
					& \text{for a.a.\,}x\in\partial\Omega,
				\end{align*}
				and for all $t\in\R$;
			\item\label{h_2}
				\begin{align*}
					\lim_{t\to \pm\infty} \frac{F(x,t)}{|t|^{q_+}}=\infty  \quad
					& \text{uniformly for a.a.\,}x\in\Omega,\\
					\lim_{t\to \pm\infty} \frac{G(x,t)}{|t|^{q_+}}=\infty  \quad
					& \text{uniformly for a.a.\,}x\in\partial\Omega;
				\end{align*}
			\item\label{h_3}
				\begin{align*}
					\lim_{t\to 0} \frac{F(x,t)}{|t|^{p(x)}}=0 \quad
					& \text{uniformly for a.a.\,}x\in\Omega,\\
					\lim_{t\to 0} \frac{G(x,t)}{|t|^{p(x)}}=0 \quad
					& \text{uniformly for a.a.\,}x\in\partial\Omega;
				\end{align*}
			\item\label{h_4}
				there exist $\alpha, \beta, \zeta, \theta \in C_+(\close)$ with
				\begin{align*}
					\min\{\alpha_-,\beta_-\}
					& \in \l((\ell_+-p_-)\frac{N}{p_-},\ell_+\r), \\
					\min\{\zeta_-,\theta_-\}
					& \in \l( (\kappa_+ - p_-) \frac{N-1}{p_- - 1} ,\kappa_+\r),
				\end{align*}
				and $K_3>0$ such that
				\begin{align*}
					0<K_3 \leq
					& \liminf_{t\to \infty }\, \frac{f(x,t)t-q_+F(x,t)}{|t|^{\alpha(x)}}, \\
					0<K_3 \leq
					& \liminf_{t\to -\infty}\, \frac{f(x,t)t-q_+F(x,t)}{|t|^{\beta(x)}},
				\end{align*}
				uniformly for a.a.\,$x\in\Omega$ and $K_4>0$ such that
				\begin{align*}
					0<K_4 \leq
					& \liminf_{t\to \infty }\, \frac{g(x,t)t-q_+G(x,t)}{|t|^{\zeta(x)}}, \\
					0<K_4 \leq
					& \liminf_{t\to -\infty}\, \frac{g(x,t)t-q_+G(x,t)}{|t|^{\theta(x)}},
				\end{align*}
				uniformly for a.a.\,$x\in\partial\Omega$;
			\item\label{h_5}
				the functions
				\begin{align*}
					t \mapsto \frac{f(x,t)}{|t|^{q_+-1}}
					\quad \text{and} \quad
					t \mapsto \frac{g(x,t)}{|t|^{q_+-1}}
				\end{align*}
				are increasing in $(-\infty, 0)$ and in $(0, \infty )$ for a.a.\,$x\in\Omega$ and for a.a.\,$x\in\partial\Omega$, respectively.
		\end{enumerate}
\end{enumerate}
We note that assumption \eqref{h_3} together with the continuity of $f(x,\cdot)$ and $g(x,\cdot)$ implies that
\begin{align}
	\label{function in zero}
	f(x,0)=0 \quad \text{for a.a.\,$x \in \Omega$ }
	\quad \text{and } \quad
	g(x,0)=0 \quad \text{for a.a.\,$x \in \partial\Omega$.}
\end{align}
Moreover, in Lemma 4.4 of Crespo-Blanco-Winkert \cite{Crespo-Blanco-Winkert-2022}, the authors summarize the properties that the nonlinear term of the equation (i.e. function $f$) verifies as consequences of the previous assumptions. Clearly, as the nonlinear function $g$ satisfies similar hypotheses on the boundary, it also verifies the same properties on $\partial\Omega$.

\begin{remark}
The conditions on the exponents in \eqref{h_4} are well defined since from \eqref{h_1} we have $\ell_+<(p_-)^*$ and $\kappa_+<(p_-)_*$ and the following hold
	\begin{align*}
		(\ell_+-p_-)\frac{N}{p_-}
		& = \ell_+\frac{N}{p_-} - (p_-)^*\frac{N-p_-}{p_-} < \ell_+\frac{N}{p_-} - \ell_+\frac{N-p_-}{p_-} = \ell_+, \\
		(\kappa_+ - p_-) \frac{N-1}{p_- - 1}
		& = \kappa_+\frac{N-1}{p_- -1} - (p_-)_*\frac{N-p_-}{p_- -1} <
		\kappa_+\frac{N-1}{p_- -1} - \kappa_+\frac{N-p_-}{p_- -1} = \kappa_+.
	\end{align*}
\end{remark}

\begin{example} \label{examples}
	Simple examples of $f$ and $g$ satisfying \eqref{H_{f,g}} are
	\begin{align*}
		f(x,t) = |t|^{q_+ + \eps_1 -2}t
		\quad\text{and}\quad
		g(x,t) = |t|^{q_+ + \eps_2 -2}t,
	\end{align*}
	i.e.\,independent of $x$, where $0 < \eps_1 < \min\{ (p_-)^* - q_+) , 1 \}$ and $0 < \eps_2 < \min\{ (p_-)_* - q_+) , 1\}$. For the assumption \eqref{h_4} choose $\alpha(x) = q_+ + \eps_1 - \tilde{\eps} -1$, where $\tilde{\eps}$ is small enough, and choose $\beta, \zeta$ and $\theta$ analogously.

	Less trivial examples of $f$ and $g$ are \begin{equation*}
		f(x,t) =
		\begin{cases}
			|t|^{l_1(x)-2}t[1 + \log(-t)], & \text{if } \phantom{-1}t \leq -1, \\
			|t|^{\eta(x)-2}t,                 & \text{if } -1 < t < 1,               \\
			|t|^{l_2(x)-2}t[1 + \log(t)],  & \text{if } \phantom{-1} 1 \leq t,
		\end{cases}
	\end{equation*}
	\begin{equation*}
		g(x,t) =
		\begin{cases}
			|t|^{\kappa_1(x)-2}t[1 + \log(-t)], & \text{if } \phantom{-1}t \leq -1, \\
			|t|^{\nu(x)-2}t,                 & \text{if } -1 < t < 1,               \\
			|t|^{\kappa_2(x)-2}t[1 + \log(t)],  & \text{if } \phantom{-1} 1 \leq t,
		\end{cases}
	\end{equation*}
	where $l_1, l_2, \eta \in C(\close)$, $q_+ \leq \eta(x)$ and $q_+ \leq l_1(x), l_2(x) < (p_-)^*$ for all $x \in \close$, and they satisfy
	\begin{equation*}
		\frac{ \max\{(l_1)_+,(l_2)_+\} }{p_-} - \frac{(l_i)_-}{N} < 1, \quad \text{for all } i \in \{ 1, 2\},
	\end{equation*}
	and also $\kappa_1, \kappa_2, \nu \in C(\close)$, $q_+ \leq \nu(x)$ and $q_+ \leq \kappa_1(x), \kappa_2(x) < (p_-)_*$ for all $x \in \close$, and they satisfy
	\begin{equation*}
		\frac{ \max\{(\kappa_1)_+,(\kappa_2)_+\} }{p_- - 1} - \frac{(\kappa_i)_-}{N-1} < \frac{p_-}{p_- - 1}, \quad \text{for all } i \in \{ 1, 2\}.
	\end{equation*}
	Then $f$ and $g$ satisfy all the assumptions above. For the assumption \eqref{h_2} of $f$ take $l(x) = \max \{ l_1(x), l_2(x) \} + \eps$ for all $x \in \close$, with $\eps > 0$ small enough so that $l_+ < (p_-)^*$ and
	\begin{equation*}
		\frac{ l_+ }{p_-} - \frac{(l_i)_-}{N} < 1, \quad \text{for all } i \in \{ 1, 2\}.
	\end{equation*}
	For the assumption \eqref{h_4} of $f$, take $\alpha(x) = l_1(x)$, $\beta(x) = l_2(x)$ for all $x \in \close$. This is the reason for the assumption on $(l_1)_\pm$ and $(l_2)_\pm$. Observe that if we take $l_1 = l_2 = l$ constant, the condition is equivalent to $l < (p_-)^*$, hence redundant in that case. For the assumptions \eqref{h_2} and \eqref{h_4} of $g$, analogous considerations apply.
\end{example}

Our aim is to establish results on the existence of weak solutions for problem \eqref{problem}, namely functions $u \in \WH$ such that
\begin{align*}
	&\into \l( |\nabla u|^{p(x)-2}\nabla u+ \mu(x) |\nabla u|^{q(x)-2}\nabla u \r)\cdot  \nabla v \,\diff x + \into |u|^{p(x)-2}u v \, \diff x \\
	&= \into f(x,u)v \, \diff x + \intor g(x,u)v \, \diff \sigma - \intor |u|^{p(x)-2}u v \, \diff \sigma,
\end{align*}
for every $v \in \WH$. In particular, these weak solutions are critical points of the energy functional $I\colon \WH \to \R$ associated to the problem \eqref{problem} given by
\begin{align*}
	I(u)
	&=  \into \l(\frac{|\nabla u|^{p(x)}}{p(x)}+\mu(x) \frac{|\nabla u|^{q(x)}}{q(x)}\r) \,\diff x + \into \frac{|u|^{p(x)}}{p(x)} \,\diff x \\
	& \quad + \intor \frac{|u|^{p(x)}}{p(x)} \,\diff \sigma - \into F(x,u) \,\diff x - \intor G(x,u) \,\diff \sigma ,
\end{align*}
for all $u \in \WH$. Since we are interested in constant sign solutions, we consider the positive and negative truncations of the functional $I$, that are $I_\pm \colon \WH \to \R$ defined by
\begin{align*}
	I_\pm(u)
	&= \into \l(\frac{|\nabla u|^{p(x)}}{p(x)}+\mu(x) \frac{|\nabla u|^{q(x)}}{q(x)}\r)\,\diff x + \into \frac{|u|^{p(x)}}{p(x)} \,\diff x \\
	& \quad + \intor \frac{|u|^{p(x)}}{p(x)} \,\diff \sigma - \into F (x,\pm u^\pm) \,\diff x - \intor G(x,\pm u^\pm) \,\diff \sigma ,
\end{align*}
for all $u \in \WH$, where we have taken \eqref{function in zero} into account. Our existence result is based on the Mountain-Pass Theorem. First we give preliminary results in order to verify the assumptions required in Theorem \ref{mountain_pass_theorem}. We start with the compactness condition on the functional.

\begin{proposition}
	\label{proposition_Cerami}
	Let hypotheses \eqref{H2}, \eqref{h_1}, \eqref{h_3} and \eqref{h_4} be satisfied. Then, the functionals $I_\pm$ satisfy the \textnormal{C}-condition.
\end{proposition}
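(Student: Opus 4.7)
The plan is to verify, for the functional $I_+$ (the case of $I_-$ being entirely symmetric), the two requirements of Definition~\ref{cerami_condition}: that a Cerami sequence $\{u_n\}_{n\in\N}\subset\WH$ is bounded in $\WH$ and admits a strongly convergent subsequence.

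For the boundedness step my first move is to test $I_+'(u_n)$ against $-u_n^-$; since $f(x,u_n^+)u_n^-\equiv 0$ and $g(x,u_n^+)u_n^-\equiv 0$, this reduces to $\langle A(u_n),-u_n^-\rangle$, which coincides with the modular $\rho(u_n^-)$. Hypothesis (C2) drives this quantity to zero, so $\|u_n^-\|\to 0$ by Proposition~\ref{properties_modular_norm_complete}(v). To bound $\{u_n^+\}$ I argue by contradiction: assume $\|u_n\|\to\infty$ and let $y_n:=u_n/\|u_n\|$, so that along a subsequence $y_n\weak y$ in $\WH$ with $y\ge 0$ a.e., and $y_n\to y$ strongly in $L^{\ell(\cdot)}(\Omega)$ and $L^{\kappa(\cdot)}(\partial\Omega)$ by Proposition~\ref{proposition_embeddings}, and pointwise a.e. If $y\not\equiv 0$, then $u_n\to+\infty$ a.e.\ on $\{y>0\}$, and~\eqref{h_2} together with Fatou's lemma forces $\into F(x,u_n^+)/\|u_n\|^{q_+}\,\diff x\to\infty$; but Proposition~\ref{properties_modular_norm_complete}(iv) bounds the modular part of $I_+(u_n)$ by a multiple of $\|u_n\|^{q_+}$, which collides with (C1) after division by $\|u_n\|^{q_+}$.

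The delicate subcase is $y\equiv 0$. Here I pick $s_n\in[0,1]$ maximizing $s\mapsto I_+(su_n)$. For any fixed $R>1$, compactness in $L^{\ell(\cdot)}(\Omega)$ and $L^{\kappa(\cdot)}(\partial\Omega)$ combined with Vitali's theorem and~\eqref{h_1} yield $\into F(x,(Ry_n)^+)\,\diff x+\intor G(x,(Ry_n)^+)\,\diff\sigma\to 0$, while the modular of $Ry_n$ is bounded below by a multiple of $R^{p_-}$; hence $I_+(Ry_n)\ge cR^{p_-}$ for $n$ large. Since $R/\|u_n\|<1$ eventually, $I_+(s_nu_n)\ge I_+(Ry_n)$, and letting $R\to\infty$ gives $I_+(s_nu_n)\to\infty$; in particular $s_n\in(0,1)$ for large $n$, so $\langle I_+'(s_nu_n),s_nu_n\rangle=0$ and
$q_+I_+(s_nu_n)=q_+I_+(s_nu_n)-\langle I_+'(s_nu_n),s_nu_n\rangle$
decomposes into nonnegative modular coefficients (since $q_+\ge p(x),q(x)$) plus $\into[f(x,(s_nu_n)^+)(s_nu_n)^+-q_+F(x,(s_nu_n)^+)]\,\diff x$ and its boundary analogue. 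Hypothesis~\eqref{h_5} makes $t\mapsto f(x,t)t-q_+F(x,t)$ and $t\mapsto g(x,t)t-q_+G(x,t)$ nondecreasing for $t\ge 0$, so with $s_n\in(0,1)$ both integrands are dominated pointwise by their values at $u_n^+$; since $s_n\le 1$ also forces each modular contribution for $s_nu_n$ to be dominated by the corresponding term for $u_n$, combining with the C-bound $q_+I_+(u_n)-\langle I_+'(u_n),u_n\rangle\le M$ produces a uniform upper bound for $q_+I_+(s_nu_n)$ that contradicts $I_+(s_nu_n)\to\infty$.

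Once $\{u_n\}$ is bounded, I extract $u_n\weak u$ in $\WH$; by Proposition~\ref{proposition_embeddings}(iii) and~(v), the convergence is strong in $L^{\ell(\cdot)}(\Omega)$ and $L^{\kappa(\cdot)}(\partial\Omega)$. Testing (C2) on $u_n-u$ yields $\langle I_+'(u_n),u_n-u\rangle\to 0$, and the nonlinear integrals vanish in the limit via~\eqref{h_1} and H\"older's inequality in variable exponent Lebesgue spaces; hence $\lim\langle A(u_n),u_n-u\rangle=0$, and the $(\Ss_+)$-property of $A$ from Proposition~\ref{properties_operator_double_phase} upgrades weak to strong convergence, $u_n\to u$ in $\WH$. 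The hard part will be the subcase $y\equiv 0$ above: producing the blow-up $I_+(s_nu_n)\to\infty$ while simultaneously extracting the competing upper bound requires careful bookkeeping of the modular under the scaling by $s_n$ in the variable exponent setting, together with the precise interplay between the asymptotic lower bound~\eqref{h_4} and the monotonicity embedded in~\eqref{h_5}.
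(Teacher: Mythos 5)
Your argument establishes the Cerami condition under a \emph{different} set of hypotheses than the proposition states, and this is the central problem. Proposition~\ref{proposition_Cerami} assumes only \eqref{H}, \eqref{h_1} and \eqref{h_4}. Your contradiction argument for boundedness uses \eqref{h_2} in the subcase $y\not\equiv 0$ (Fatou plus superlinearity of $F,G$ in $|t|^{q_+}$) and \eqref{h_5} in the subcase $y\equiv 0$ (to derive monotonicity of $t\mapsto f(x,t)t-q_+F(x,t)$ on $(0,\infty)$, which drives the Jeanjean-type scaling comparison). Neither \eqref{h_2} nor \eqref{h_5} is available here. Conversely, \eqref{h_4}, which \emph{is} assumed and is exactly the hypothesis designed to control the Cerami sequence, is only mentioned in passing at the end without actually entering your argument. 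The net effect is that you prove boundedness for a different (and not weaker) hypothesis set, so the proof does not deliver the proposition as stated.

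For comparison, the paper's route is more direct and stays strictly within \eqref{h_1} and \eqref{h_4}. After establishing $-u_n^-\to 0$ exactly as you do, it adds the inequalities coming from (C1) (applied to $I_+(u_n)$) and from (C2) tested against $u_n^+$, producing a uniform bound on
$\into\bigl(f(x,u_n^+)u_n^+-q_+F(x,u_n^+)\bigr)\diff x+\intor\bigl(g(x,u_n^+)u_n^+-q_+G(x,u_n^+)\bigr)\diff\sigma$.
Hypothesis \eqref{h_4} then gives the lower bound $\hat K_3|t|^{\alpha_-}-\tilde K_3$ (and the boundary analogue), hence boundedness of $\{u_n^+\}$ in $\Lp{\alpha_-}$ and $L^{\zeta_-}(\partial\Omega)$. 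Interpolation between $\Lp{\alpha_-}$ and $\Lp{(p_-)^*}$ (resp.\ $L^{\zeta_-}(\partial\Omega)$ and $L^{(p_-)_*}(\partial\Omega)$), together with the growth from \eqref{h_1} and the arithmetic in \eqref{h_4} showing $s\ell_+<p_-$ and $\tau\kappa_+<p_-$, yields $\|u_n^+\|^{p_-}\le C(1+\|u_n^+\|^{s\ell_+}+\|u_n^+\|^{\tau\kappa_+})$ and therefore boundedness in $\WH$. No contradiction argument or blow-up rescaling is needed, and in particular no use is made of \eqref{h_2} or \eqref{h_5}. Your final step (passing from $u_n\weak u$ to $u_n\to u$ via strong continuity of the Nemytskii terms and the $(\Ss_+)$-property of $A$) is the same as the paper's and is fine.

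If you wish to salvage your route, you must either (a) assume in addition \eqref{h_2} and \eqref{h_5}, which changes the statement, or (b) replace the $y\equiv 0$ and $y\not\equiv 0$ dichotomy with the modular estimate coming directly from \eqref{h_4} and the interpolation inequality. Option (b) is what the paper does and is the argument that actually matches the hypotheses.
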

\begin{proof}
	We show the proof for $I_+$, the case for $I_-$ works in the same way. Let $\{u_n\}_{n\in\N}\subseteq \WH$ be a sequence such that \eqref{C1} and \eqref{C2} from Definition \ref{cerami_condition} hold. From \eqref{C2}, there exists $\{\eps_n\}_{n \in \N}$ with $\eps_n\to0^+$ such that
	\begin{align}
		\label{cons_C_2}
		\l| \lan I'_+ (u_n),v \ran \r| \le \frac{\eps_n \norm{v}}{1+\norm{u_n}} \quad \text{for all } n \in \N \text{ and for all } v \in \WH.
	\end{align}
	Choosing $v=-u_n^- \in \WH$, one has
	\begin{align*}
		\rho(-u_n^-) - \into f(x,+u_n^+)(-u_n^-)\,\diff x - \intor g(x,+u_n^+)(-u_n^-)\,\diff\sigma \le \eps_n,
	\end{align*}
	for all $n \in \N$, which leads to $\rho(-u_n^-) \to 0$ as $n \to \infty$, since the supports of $+u_n^+$ and $-u_n^-$ do not overlap. From Proposition \ref{properties_modular_norm_complete}(v) it follows that
	\begin{align}
		\label{u_n_-_to_zero}
		- u_n^- \to 0 \quad \text{in $\WH$.}
	\end{align}

	{\bf Claim 1:} $\{u_n^+\}_{n \in \N}$ is bounded in $\Lp{\alpha_-}$ and in $L^{\zeta_-}(\partial\Omega)$.\\
	From \eqref{C1} we have that there exists a constant $M_1>0$ such that for all $n \in \N$ one has $|I_+(u_n)|\le M$, that is
	\begin{align*}
		\frac{1}{q_+} \rho(u_n^+) - \into F(x,u_n^+)\,\diff x - \intor G(x,u_n^+)\,\diff \sigma \le M_1 - \frac{1}{q_+} \rho(-u_n^-),
	\end{align*}
	which, taking \eqref{u_n_-_to_zero} into account, leads to
	\begin{align}\label{eq_a}
		\rho(u_n^+) - \into q_+ F(x,u_n^+)\,\diff x - \intor q_+ G(x,u_n^+)\,\diff \sigma \le M_2,
	\end{align}
	for all $n \in \N$ and for some $M_2>0$. Testing \eqref{cons_C_2} for $v=u_n^+$, we have
	\begin{align}\label{eq_b}
		-\rho(u_n^+) + \into f(x,u_n^+)u_n^+\,\diff x + \intor g(x,u_n^+)u_n^+\,\diff \sigma \le \eps_n,
	\end{align}
	for all $n \in \N$.	Adding \eqref{eq_a} and \eqref{eq_b} we obtain
	\begin{align}\label{eq_c}
		\begin{split}
			&\into \l( f(x,u_n^+)u_n^+ - q_+ F(x,u_n^+)\r) \,\diff x\\
			&+ \intor \l( g(x,u_n^+)u_n^+ - q_+ G(x,u_n^+)\r) \,\diff \sigma \le M_3,
		\end{split}
	\end{align}
	for all $n\in\N$, with $M_3>0$. Without loss of generality, we can assume $\alpha_-\le\beta_-$ and $\zeta_-\le\theta_-$. From \eqref{h_4}, there exist $\hat{K}_3, \tilde{K}_3, \hat{K}_4, \tilde{K}_4>0$ such that for all $t \in \R$ the following hold
	\begin{align*}
		f(x,t)t - q_+ F(x,t) & \ge \hat{K}_3 |t|^{\alpha_-} - \tilde{K}_3 \quad \text{for a.a.\,}x \in \Omega, \\
		g(x,t)t - q_+ G(x,t) & \ge \hat{K}_4 |t|^{\zeta_-} - \tilde{K}_4 \quad \text{for a.a.\,}x \in \partial\Omega.
	\end{align*}
	Exploiting these relations in \eqref{eq_c}, we derive
	\begin{align*}
		\hat{K}_3 \norm{u_n^+}_{\alpha_-}^{\alpha_-} + \hat{K}_4 \norm{u_n^+}_{\zeta_-,\partial\Omega}^{\zeta_-} \le M_4,
	\end{align*}
	which gives
	\begin{align*}
		\norm{u_n^+}_{\alpha_-} \le M_5
		\quad \text{and} \quad \norm{u_n^+}_{\zeta_-,\partial\Omega} \le \tilde{M}_5 \quad \text{for all }  n \in \N\
	\end{align*}
	for some $M_5, \tilde{M}_5>0$ and Claim 1 is achieved.\\

	{\bf Claim 2:} $\{u_n^+\}_{n \in \N}$ is bounded in $\WH$.\\
	From \eqref{h_1} and \eqref{h_4}, we have that
	\begin{align*}
		\alpha_- < \ell_+ < (p_-)^*
		\quad \text{and} \quad
		\zeta_- < \kappa_+ < (p_-)_*.
	\end{align*}
	Hence, there exist $s,\tau \in (0,1)$ such that
	\begin{align}
		\label{interpolation}
		\frac{1}{\ell_+} = \frac{s}{(p_-)^*} + \frac{1-s}{\alpha_-}
		\quad \text{and} \quad
		\frac{1}{\kappa_+} = \frac{\tau}{(p_-)_*} + \frac{1-\tau}{\zeta_-},
	\end{align}
	and applying the interpolation inequality, see Papageorgiou-Winkert \cite[Proposition 2.3.17 p.116]{Papageorgiou-Winkert-2018}, we obtain
	\begin{align*}
		\norm{u_n^+}_{\ell_+}
		& \le \norm{u_n^+}_{(p_-)^*}^s \norm{u_n^+}_{\alpha_-}^{1-s},\\
		\norm{u_n^+}_{\kappa_+,\partial\Omega}
		& \le \norm{u_n^+}_{(p_-)_*, \partial\Omega}^\tau  \norm{u_n^+}_{\zeta_-, \partial\Omega}^{1-\tau},
	\end{align*}
	for all $n \in \N$.	Taking Claim 1 into account, one has
	\begin{align}\label{eq_d}
		\norm{u_n^+}_{\ell_+} \le M_6 \norm{u_n^+}_{(p_-)^*}^s
		\quad \text{and} \quad \norm{u_n^+}_{\kappa_+,\partial\Omega} \le \tilde{M}_6 \norm{u_n^+}_{(p_-)_*,\partial\Omega}^\tau,
	\end{align}
	for some $M_6, \tilde{M}_6>0$  and for all $n \in \N$. Again, from \eqref{cons_C_2} with $v=u_n^+$, using \eqref{h_1}, it follows that
	\begin{align}
		\label{eq_e}
		\rho(u_n^+) \le \eps_n + K_1 \into \l(|u_n^+| + |u_n^+|^{\ell(x)}\r)\,\diff x + K_2 \intor \l(|u_n^+| + |u_n^+|^{\kappa(x)}\r)\,\diff \sigma.
	\end{align}
	We may assume that $\norm{u_n^+}\geq 1$ for all $n \in \N$, otherwise we are done. Then, using Proposition \ref{properties_modular_norm_complete}(iv), \eqref{eq_e} and \eqref{eq_d}, we derive that
	\begin{align*}
		\norm{u_n^+}^{p_-}
		& \le \rho(u_n^+) \le\eps_n + K_1 \l( \norm{u_n^+}_1 + \norm{u_n^+}_{\ell_+}^{\ell_+}\r) + K_2 \l( \norm{u_n^+}_{1,\partial\Omega} + \norm{u_n^+}_{\kappa_+,\partial\Omega}^{\kappa_+}\r) \\
		& \le \eps_n + M_7 \l( 1+ \norm{u_n^+}_{(p_-)^*}^{s \ell_+}\r) + \tilde{M}_7 \l( 1+ \norm{u_n^+}_{(p_-)_*,\partial\Omega}^{\tau \kappa_+}\r),
	\end{align*}
	with $M_7, \tilde{M}_7>0$. Then, considering the embeddings $\WH\hookrightarrow \Wp{p_-} \hookrightarrow L^{(p_-)^*}(\Omega)$ and $\WH\hookrightarrow W^{1,p_-} (\Omega) \hookrightarrow L^{(p_-)_*}(\partial\Omega)$, we get
	\begin{align*}
		\norm{u_n^+}^{p_-} \le \eps_n + M_8 \l( 1 + \norm{u_n^+}^{s \ell_+} + \norm{u_n^+}^{\tau \kappa_+} \r),
	\end{align*}
	for all $n \in \N$ and for some $M_8 >0$. From \eqref{interpolation}, the definition of $(p_-)^*$ and \eqref{h_4}, one has
	\begin{align*}
		s \ell_+
		& = \frac{(p_-)^* (\ell_+ - \alpha_-)}{(p_-)^*-\alpha_-} =  \frac{Np_- (\ell_+-\alpha_-)}{Np_--N\alpha_-+p_-\alpha_-} \\
		& < \frac{Np_- (\ell_+ -\alpha_-)}{Np_- -N\alpha_- + p_-(\ell_+ - p_-)\frac{N}{p_-}} = p_-.
	\end{align*}
	Similarly, from \eqref{interpolation}, the definition of $(p_-)_*$ and \eqref{h_4}, we have
	\begin{align*}
		\zeta_- > \frac{\zeta_-}{p_-} + (\kappa_+ - p_-)\frac{N-1}{p_- },
	\end{align*}
	which implies
	\begin{align*}
		\tau \kappa_+
		& = \frac{(p_-)_* (\kappa_+ - \zeta_-)}{(p_-)_*-\zeta_-} = \frac{(N-1)p_- (\kappa_+ -\zeta_-)}{(N-1)p_- -N\zeta_- + p_-\zeta_-} \\
		& < \frac{(N-1)p_- (\kappa_+ -\zeta_-)}{(N-1)p_- -N\zeta_- +p_-\l( \frac{\zeta_-}{p_-} + (\kappa_+ - p_-)\frac{N-1}{p_-} \r)} = p_-.
	\end{align*}
	This completes the proof of Claim 2.\\

	{\bf Claim 3:} $u_n \to u$ in $\WH$ up to a subsequence.\\
	From \eqref{u_n_-_to_zero} and Claim 2, it follows that $\{u_n\}_{n \in \N}$ is bounded in $\WH$. Since $\WH$ is a reflexive space, there exists a weakly convergent subsequence in $\WH$, not relabeled, such that
	\begin{align*}
		u_n \weak u \quad \text{in }\WH.
	\end{align*}
	Then, as by \eqref{cons_C_2} in correspondence of $v=u_n-u$, it holds
	\begin{align*}
		\lan I'_+(u_n),u_n-u \ran \to 0 \quad \text{as $n \to \infty$}.
	\end{align*}
	The $f$ and $g$ terms are strongly continuous (see for example \cite[Lemma 4.4]{Crespo-Blanco-Winkert-2022}), hence their limit vanishes and we derive
	\begin{align*}
		\lan A(u_n), u_n-u \ran \to 0 \quad \text{as } n \to \infty.
	\end{align*}
	As $A$ satisfies the $(\Ss_+)$-property, see Proposition \ref{properties_operator_double_phase}, the proof is complete.
\end{proof}

The following results are needed to verify the so-called mountain-pass geometry.

\begin{proposition}
	\label{proposition_lower_estimates_I}
	Let hypotheses \eqref{H2}, \eqref{h_1} and \eqref{h_3} be satisfied. Then, there exist constants $C_i>0, i=1,\ldots,5$ such that
	\begin{align*}
		I(u), I_\pm(u) \ge
		\begin{cases}
			C_1 \norm{u}^{q_+} - C_2 \norm{u}^{\ell_-} - C_3 \norm{u}^{\kappa_-} \quad \text{if } \norm{u} \le \min\{1, C_4, C_5\}, \\
			C_1 \norm{u}^{p_-} - C_2 \norm{u}^{\ell_+} - C_3 \norm{u}^{\kappa_+} \quad \text{if } \norm{u} \ge \max\{1, C_4, C_5\}.
		\end{cases}
	\end{align*}
\end{proposition}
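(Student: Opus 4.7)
The plan is to split each of $I(u)$ and $I_\pm(u)$ into its coercive modular part and the nonlinear integrals involving $F$ and $G$, and to estimate each in terms of $\norm{u}$ via the modular-norm relations of Propositions \ref{properties_norm_modular_r} and \ref{properties_modular_norm_complete} combined with the continuous Sobolev embeddings of Proposition \ref{proposition_embeddings}. Since $p(x), q(x) \leq q_+$ on $\close$, the modular part of $I(u)$ (identical to that of $I_\pm(u)$) satisfies
\begin{align*}
    \into\!\l(\frac{|\nabla u|^{p(x)}}{p(x)} + \mu(x)\frac{|\nabla u|^{q(x)}}{q(x)} + \frac{|u|^{p(x)}}{p(x)}\r)\diff x + \intor\!\frac{|u|^{p(x)}}{p(x)}\diff\sigma \geq \frac{\rho(u)}{q_+},
\end{align*}
and Proposition \ref{properties_modular_norm_complete}\textnormal{(iii)--(iv)} then gives $\rho(u)\geq\norm{u}^{q_+}$ when $\norm{u}\leq 1$ and $\rho(u)\geq\norm{u}^{p_-}$ when $\norm{u}\geq 1$.

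Next, integrating the growth condition \eqref{h_1} and absorbing the linear term via Young's inequality, I obtain a constant $C>0$ with $|F(x,t)|\leq C(1+|t|^{\ell(x)})$ and $|G(x,t)|\leq C(1+|t|^{\kappa(x)})$, so that
\begin{align*}
    \into|F(x,u)|\diff x + \intor|G(x,u)|\diff\sigma \leq C_0 + C\rho_{\ell(\cdot)}(u) + C\rho_{\kappa(\cdot),\partial\Omega}(u)
\end{align*}
for some $C_0>0$. By \eqref{h_1} we have $\ell_+<(p_-)^*$ and $\kappa_+<(p_-)_*$, so Proposition \ref{proposition_embeddings}\textnormal{(iii),(v)} provides continuous embeddings $\WH\hookrightarrow L^{\ell(\cdot)}(\Omega)$ and $\WH\hookrightarrow L^{\kappa(\cdot)}(\partial\Omega)$ with constants $K_\ell, K_\kappa$. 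The universally valid bound deduced from Proposition \ref{properties_norm_modular_r} reads
\begin{align*}
    \rho_{\ell(\cdot)}(u) \leq \norm{u}_{\ell(\cdot)}^{\ell_-} + \norm{u}_{\ell(\cdot)}^{\ell_+} \leq K_\ell^{\ell_-}\norm{u}^{\ell_-} + K_\ell^{\ell_+}\norm{u}^{\ell_+},
\end{align*}
and analogously on the boundary; the $\ell_-$ (resp.\ $\kappa_-$) term dominates when $\norm{u}\leq 1$ and the $\ell_+$ (resp.\ $\kappa_+$) term when $\norm{u}\geq 1$. Setting $C_4:=K_\ell^{-1}$ and $C_5:=K_\kappa^{-1}$ and combining with the modular lower bound produces the two claimed inequalities after collecting constants. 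The case of $I_\pm$ follows verbatim, since $|F(x,\pm u^\pm)|$ and $|G(x,\pm u^\pm)|$ satisfy the same growth bounds.

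The main technical subtlety I expect is the additive constant $C_0$ from the estimate on $F$ and $G$, which has no counterpart in the stated lower bound: since the linear factor $|t|$ from \eqref{h_1} cannot be dominated by $|t|^{\ell(x)}$ for small $|t|$, it inevitably produces a constant $C|\Omega|+C|\partial\Omega|$. This is absorbed by calibrating the thresholds $C_4, C_5$ so that the coercive power of $\norm{u}$ outweighs $C_0$ in each regime, or equivalently by sharpening the bound on $F, G$ near zero through $F(x,0)=G(x,0)=0$ together with hypothesis \eqref{h_3}.
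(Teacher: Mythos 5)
The skeleton of your argument (split $I$ into modular plus nonlinear integrals, invoke Proposition~\ref{properties_modular_norm_complete}\textnormal{(iii)--(iv)} for the modular and the embeddings of Proposition~\ref{proposition_embeddings} for the $F,G$ integrals) matches the paper. However, there is a genuine gap in how you handle the nonlinear terms, and you flag the right symptom but then offer a fix that does not work.

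Using only \eqref{h_1} you obtain $|F(x,t)|\le C(1+|t|^{\ell(x)})$, which after integration produces an additive constant $C_0=C(|\Omega|+|\partial\Omega|)$. Your first proposed remedy --- calibrating $C_4,C_5$ so that the coercive power outweighs $C_0$ --- cannot succeed: the asserted lower bound in the small-norm regime is $C_1\norm{u}^{q_+}-C_2\norm{u}^{\ell_-}-C_3\norm{u}^{\kappa_-}$, which tends to $0$ as $\norm{u}\to 0$, whereas a residual $-C_0$ would force the right-hand side to stay $\le -C_0 <0$ near the origin. No choice of thresholds removes a constant that persists as $\norm{u}\to 0$ (and the absence of such a constant is precisely what makes Proposition~\ref{proposition_0_local_minimizer} work). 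Your second suggestion --- sharpen $F,G$ near zero using \eqref{h_3} --- is the correct one, and it is what the paper actually does; you should commit to it rather than leave it as an alternative. Concretely, combining \eqref{h_1} with \eqref{h_3} (note that as stated the limit in \eqref{h_3} at $t\to\pm\infty$ is incompatible with \eqref{h_2}; it must be read as a condition at $t\to 0$) yields, for every $\eps>0$, constants $c_\eps,\tilde c_\eps>0$ with
\begin{align*}
|F(x,t)|\le \frac{\eps}{p(x)}|t|^{p(x)}+c_\eps|t|^{\ell(x)},\qquad
|G(x,t)|\le \frac{\eps}{p(x)}|t|^{p(x)}+\tilde c_\eps|t|^{\kappa(x)},
\end{align*}
with no additive constant. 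This introduces an $\eps\,\rho_{p(\cdot)}(u)$ (and boundary analogue) term that must then be absorbed into the modular lower bound, which forces you to keep track of the coefficients $\tfrac1{q_+}$ and $\tfrac1{p_+}-\tfrac{\eps}{p_-}$ separately rather than the cruder $\rho(u)/q_+$ you use, and to choose $\eps$ small enough (e.g.\ $\eps<(q_+-p_+)p_-/(p_+q_+)$) so that the latter stays positive. Once that is done the rest of your argument, including the reduction to a single power of $\norm{u}$ in each regime via the thresholds $C_4=C_\ell^{-1}$, $C_5=C_{\kappa,\partial\Omega}^{-1}$, goes through as you describe.
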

\begin{proof}
	We give the proof only for the functional $I$, the proof for $I_\pm$ is similar. From assumptions \eqref{h_1} and \eqref{h_3} it follows that for all $\eps >0$ there exist $c_\eps, \tilde{c}_\eps >0$ such that
	\begin{equation}
		\label{estimate_3}
		\begin{aligned}
			& |F(x,t)| \le \frac{\eps}{p(x)} |t|^{p(x)} + c_\eps |t|^{\ell(x)} \quad \text{for a.a.\,} x \in \Omega \text{ and for all }t \in \R,\\
			& |G(x,t)| \le \frac{\eps}{p(x)} |t|^{p(x)} + \tilde{c}_\eps |t|^{\kappa(x)} \quad \text{for a.a.\,} x \in \partial\Omega \text{ and for all }t \in \R.
		\end{aligned}
	\end{equation}
	Let $u \in \WH$ be fixed. Using \eqref{estimate_3}, Proposition \ref{properties_norm_modular_r}, the embedding $\WH$ $\hookrightarrow \Lp{\ell(\cdot)}$ with constant $C_{\ell}$ and the embedding $\WH \hookrightarrow L^{\kappa(\cdot)}(\partial\Omega)$ with constant $C_{\kappa,\partial\Omega}$ one has
	\begin{align*}
		I(u)
		&\ge \frac{1}{q_+} \rho_{\mathcal{H}}(\nabla u) + \frac{1}{p_+} \rho_{p(\cdot)}(u) + \frac{1}{p_+} \rho_{p(\cdot),\partial\Omega}(u) \\
		& \quad - \frac{\eps}{p_-} \rho_{p(\cdot)}(u) - c_\eps \rho_{\ell(\cdot)}(u) - \frac{\eps}{p_-} \rho_{p(\cdot),\partial\Omega}(u) - \tilde{c}_\eps \rho_{\kappa(\cdot),\partial\Omega}(u)\\
		& = \frac{1}{q_+} \rho_{\mathcal{H}}(\nabla u) + \l(\frac{1}{p_+}-\frac{\eps}{p_-}\r) \rho_{p(\cdot)}(u) + \l(\frac{1}{p_+}-\frac{\eps}{p_-}\r) \rho_{p(\cdot),\partial\Omega}(u) \\
		&\quad  - c_\eps \rho_{\ell(\cdot)}(u)- \tilde{c}_\eps \rho_{\kappa(\cdot),\partial\Omega}(u)\\
		&\ge \min\l\{\frac{1}{q_+}, \frac{1}{p_+}-\frac{\eps}{p_-}\r\} \rho(u) \\
		&\quad  - c_\eps \max\l\{\norm{u}_{\ell(\cdot)}^{\ell_-}, \norm{u}_{\ell(\cdot)}^{\ell_+}\r\} - \tilde{c}_\eps \max\l\{\norm{u}_{\kappa(\cdot),\partial\Omega}^{\kappa_-}, \norm{u}_{\kappa(\cdot),\partial\Omega}^{\kappa_+}\r\} \\
		&\ge \min\l\{\frac{1}{q_+}, \frac{1}{p_+}-\frac{\eps}{p_-}\r\} \rho(u)\\
		&\quad  - c_\eps \max\l\{ C_{\ell}^{\ell_-} \norm{u}^{\ell_-}, C_{\ell}^{\ell_+} \norm{u}^{\ell_+}\r\} - \tilde{c}_\eps \max\l\{ C_{\kappa,\partial\Omega}^{\kappa_-} \norm{u}^{\kappa_-}, C_{\kappa,\partial\Omega}^{\kappa_+} \norm{u}^{\kappa_+}\r\}.
	\end{align*}
	Choosing $\eps \in \l(0, \frac{(q_+ - p_+)p_-}{p_+ q_+}\r)$ and taking
	\begin{align*}
		C_1 = \frac{1}{q_+}, \quad C_4 = \frac{1}{C_\ell}
		\quad \text{and } \quad
		C_5 = \frac{1}{C_{\kappa, \partial\Omega}},
	\end{align*}
	our statement follows from Proposition \ref{properties_modular_norm_complete}(iii)-(iv) and by setting
	\begin{align*}
		& C_2 = c_\eps C_{\ell}^{\ell_-} \quad \text{and } \quad C_3 = \tilde{c}_\eps C_{\kappa,\partial\Omega}^{\kappa_-} \qquad \text{if } \norm{u} \le \min\{1, C_4, C_5\},  \\
		& C_2 = c_\eps C_{\ell}^{\ell_+} \quad \text{and } \quad C_3 = \tilde{c}_\eps C_{\kappa,\partial\Omega}^{\kappa_+} \qquad \text{if } \norm{u} \ge \max\{1, C_4, C_5\}.
	\end{align*}
\end{proof}

The following result is a direct consequence of Proposition \ref{proposition_lower_estimates_I}.
\begin{proposition}
	\label{proposition_0_local_minimizer}
	Let hypotheses \eqref{H2}, \eqref{h_1} and \eqref{h_3} be satisfied with $q_+ < \ell_-, \kappa_-$. Then
	there exists $\delta>0$ such that
	\begin{align*}
		\inf_{\norm{u} = \delta} I(u)>0
		\quad \text{and} \quad
		\inf_{\norm{u} = \delta} I_\pm(u)>0,
	\end{align*}
	or alternatively, there exists $\lambda>0$ such that $I(u)>0$ for $0<\norm{u}<\lambda$.
\end{proposition}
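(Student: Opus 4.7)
The proof will be an immediate consequence of Proposition \ref{proposition_lower_estimates_I} together with the extra assumption $q_+ < \ell_-, \kappa_-$. The strategy is to work in the small-norm regime of Proposition \ref{proposition_lower_estimates_I} and simply compare the exponents.

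Specifically, I would pick $\delta_0 := \min\{1, C_4, C_5\}$, where $C_4, C_5$ are the constants from Proposition \ref{proposition_lower_estimates_I}, so that for every $u \in \WH$ with $\|u\| \le \delta_0$ the lower estimate
\begin{align*}
I(u) \ge C_1 \|u\|^{q_+} - C_2 \|u\|^{\ell_-} - C_3 \|u\|^{\kappa_-}
\end{align*}
applies (and analogously for $I_\pm$). Factoring out $\|u\|^{q_+}$ gives
\begin{align*}
I(u) \ge \|u\|^{q_+} \bigl( C_1 - C_2 \|u\|^{\ell_- - q_+} - C_3 \|u\|^{\kappa_- - q_+} \bigr).
\end{align*}
Since by hypothesis $\ell_- - q_+ > 0$ and $\kappa_- - q_+ > 0$, the continuous function $t \mapsto C_2 t^{\ell_- - q_+} + C_3 t^{\kappa_- - q_+}$ tends to $0$ as $t \to 0^+$. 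Hence there exists $\lambda \in (0, \delta_0]$ such that $C_2 t^{\ell_- - q_+} + C_3 t^{\kappa_- - q_+} < C_1/2$ for all $t \in (0,\lambda]$, which directly yields $I(u) \ge (C_1/2)\|u\|^{q_+} > 0$ for all $u$ with $0 < \|u\| \le \lambda$.

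Choosing any $\delta \in (0, \lambda]$ gives the first claim $\inf_{\|u\|=\delta} I(u) \ge (C_1/2)\delta^{q_+} > 0$, and the factored inequality itself is precisely the alternative statement $I(u) > 0$ for $0 < \|u\| < \lambda$. The same argument, applied with the analogous constants, handles $I_\pm$. No real obstacle is expected here: everything that is needed, namely the two-sided modular-norm comparison and the growth estimates on $F, G$, has already been used to prove Proposition \ref{proposition_lower_estimates_I}; the present statement is essentially a one-line corollary that exploits the strict gap $q_+ < \min\{\ell_-, \kappa_-\}$ to make the subleading terms negligible near the origin.
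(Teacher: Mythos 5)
Your proof is correct and is exactly the ``direct consequence'' argument the paper has in mind: restrict to the small-norm regime of Proposition \ref{proposition_lower_estimates_I}, factor out $\norm{u}^{q_+}$, and use the strict gap $q_+<\min\{\ell_-,\kappa_-\}$ to make the subleading terms negligible for $\norm{u}$ small. No discrepancy with the paper's route.
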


\begin{proposition}
	\label{proposition_I_-infty}
	Let hypotheses \eqref{H2}, \eqref{h_1} and \eqref{h_2} be satisfied. Then, $I(s u) \to -\infty$ as $s \to \pm\infty$ for every $u \in \WH\setminus \{0\}$. Moreover, $I_\pm(s u) \to -\infty$ as $s \to \pm\infty$ for all $u \in \WH \setminus \{0\}$ such that $u \ge 0$ a.e.\,in $\Omega$.
\end{proposition}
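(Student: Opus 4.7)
My plan is to prove the stronger statement $I(su)/|s|^{q_+}\to -\infty$ as $|s|\to\infty$, from which $I(su)\to -\infty$ follows. The strategy is to pair a cheap polynomial upper bound on the positive (``kinetic'') part of $I(su)$ with a Fatou-type divergent lower bound on $\into F(x,su)\,\diff x$ coming from the superlinear growth in \eqref{h_2}.

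First, for $|s|\geq 1$ the pointwise inequalities $|s|^{p(x)},|s|^{q(x)}\leq |s|^{q_+}$ immediately give the kinetic bound
\[
\into\!\Bigl(\tfrac{|\nabla(su)|^{p(x)}}{p(x)}+\mu(x)\tfrac{|\nabla(su)|^{q(x)}}{q(x)}+\tfrac{|su|^{p(x)}}{p(x)}\Bigr)\diff x+\intor\tfrac{|su|^{p(x)}}{p(x)}\,\diff\sigma\leq\frac{|s|^{q_+}}{p_-}\rho(u).
\]
Next, combining \eqref{h_1} and \eqref{h_2} I produce a uniform constant $C>0$ with $F(x,t),G(x,t)\geq -C$ for all $t\in\R$ and a.e.\,$x$: by \eqref{h_2} with $M=1$ there is a threshold $T>0$ beyond which $F(x,t),G(x,t)\geq |t|^{q_+}\geq 0$, and by \eqref{h_1} the primitives $F(x,\cdot),G(x,\cdot)$ are uniformly bounded on $[-T,T]$.

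With this lower bound in hand, I apply Fatou's lemma to the nonnegative sequence $(F(x,su)+C)/|s|^{q_+}$ (for $|s|\geq 1$). For a.e.\,$x$ with $u(x)\neq 0$, $|su(x)|\to\infty$ as $|s|\to\infty$ and
\[\frac{F(x,su(x))}{|s|^{q_+}}=\frac{F(x,su(x))}{|su(x)|^{q_+}}\,|u(x)|^{q_+}\to +\infty\]
by \eqref{h_2}, while on $\{u=0\}$ the expression equals $C/|s|^{q_+}\to 0$. Since $u\not\equiv 0$ in $\WH$ forces $|\{u\neq 0\}|>0$, Fatou yields $|s|^{-q_+}\into F(x,su)\,\diff x\to +\infty$. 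The boundary term $|s|^{-q_+}\intor G(x,su)\,\diff\sigma\geq -C|\partial\Omega|/|s|^{q_+}\to 0$ contributes at most $0$ in the final $\liminf$. Assembling,
\[\limsup_{|s|\to\infty}\frac{I(su)}{|s|^{q_+}}\leq \frac{\rho(u)}{p_-}-\infty-0=-\infty,\]
so $I(su)\to -\infty$.

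For $I_{\pm}$ with $u\geq 0$ a.e.\,in $\Omega$, one has $(su)^+=su$ when $s>0$ and $-(su)^-=su$ when $s<0$, so $I_+(su)$ as $s\to +\infty$ and $I_-(su)$ as $s\to -\infty$ reduce to the same Fatou argument (using, respectively, the $t\to +\infty$ and $t\to -\infty$ branches of \eqref{h_2} on $\{u>0\}$). The main obstacle is precisely the reason I route through Fatou rather than using the naive inequality $F(x,t)\geq M|t|^{q_+}-C_M$: since \eqref{H} only guarantees the pointwise bound $q(x)<p^*(x)$, one may have $q_+>(p^*)_-$, so a generic $u\in\WH$ need not belong to $L^{q_+}(\Omega)$, which the naive approach would require; Fatou's lemma bypasses this difficulty entirely.
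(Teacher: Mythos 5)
Your proof is correct, and it departs from the paper's route in a meaningful way. The paper integrates the quantified lower bound $F(x,t)\geq\tfrac{\eps}{q_+}|t|^{q_+}-c_\eps$ (and likewise for $G$), isolating a leading term of the form $|s|^{q_+}\bigl[\rho_{q(\cdot),\mu}(\nabla u)/q_- - \tfrac{\eps}{q_+}\bigl(\|u\|_{q_+}^{q_+}+\|u\|_{q_+,\partial\Omega}^{q_+}\bigr)\bigr]$, and then chooses $\eps$ large to make the bracket negative; this presupposes $u\in L^{q_+}(\Omega)\cap L^{q_+}(\partial\Omega)$, which the paper justifies by noting $q_+<\ell_-<(p_-)^*$. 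You instead normalize by $|s|^{q_+}$, bound the kinetic part by $\rho(u)/p_-$, and push the potential term to $+\infty$ via Fatou applied to the nonnegative functions $(F(x,su)+C)/|s|^{q_+}$; this yields the marginally stronger conclusion $I(su)/|s|^{q_+}\to-\infty$ and never needs $u\in L^{q_+}(\Omega)$. Your stated reason for distrusting the naive route is, strictly speaking, unfounded: \eqref{h_1} together with \eqref{h_2} forces $\ell_-\geq q_+$ (otherwise on the nonempty open set $\{\ell<q_+\}$ the growth bound from \eqref{h_1} would give $F(x,t)/|t|^{q_+}\to 0$, contradicting \eqref{h_2}), and then $\ell_+<(p_-)^*$ from \eqref{h_1} yields $q_+<(p_-)^*$, hence $\WH\hookrightarrow L^{q_+}(\Omega)$. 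Nevertheless, the Fatou route is cleaner, does not rest on this implicit consequence of the hypotheses, and is in fact the same device the paper itself uses to prove coercivity of $I|_{\mathcal{N}}$ in Proposition \ref{proposition_I_coericive}. Your reduction of $I_\pm$ to $I$ via $\pm(su)^\pm=su$ for $u\geq 0$ and $\pm s>0$ is handled identically in the paper.
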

\begin{proof}
	We give the proof only for the functional $I$, since if $u \ge 0$ a.e.\,in $\Omega$ then $I_\pm(s u) = I(s u)$ for $\pm s>0$. Fix $s, \eps \in \R$ and $u \in \WH$ such that $|s|\ge1, \eps \ge 1$ and $u \ne 0$. From \eqref{h_1} and \eqref{h_2} it follows that
	\begin{align*}
		|F(x,t)| \ge \frac{\eps}{q_+} |t|^{q_+} - c_\eps \quad & \text{for a.a.\,}x \in \Omega,\\
		|G(x,t)| \ge \frac{\eps}{q_+} |t|^{q_+} - c_\eps \quad & \text{for a.a.\,}x \in \partial\Omega,
	\end{align*}
	see also \cite[Lemma 4.4]{Crespo-Blanco-Winkert-2022}. Then, using the previous inequalities, one has
	\begin{align*}
		I(s u)
		&\le \frac{|s|^{p_+}}{p_-} \Big(\rho_{p(\cdot)}(\nabla u) + \rho_{p(\cdot)}(u) + \rho_{p(\cdot),\partial\Omega}(u)\Big) + c_\eps \l(|\Omega|+|\partial\Omega|\r) \\
		&\quad  + |s|^{q_+} \l[\frac{\rho_{q(\cdot),\mu}(\nabla u)}{q_-} - \frac{\eps}{q_+}\l(\norm{u}_{q_+}^{q_+} + \norm{u}_{q_+,\partial\Omega}^{q_+}\r)\r] .
	\end{align*}
	Noting that $\norm{u}_{q_+}  < \infty$ and $\norm{u}_{q_+,\partial \Omega}  < \infty$ since $q_+ < l_- < (p_-)^*$ and $q_+ < \kappa_- < (p_-)_*$, we can choose $\eps$ large enough such that the third term is negative and $I(su) \to - \infty$ as $|s| \to \infty$.
\end{proof}

Finally, we state the main result of this section.

\begin{theorem}
	\label{theorem_2_sol}
	Let hypotheses \eqref{H2}, \eqref{h_1}--\eqref{h_4} be satisfied. Then, there exist two nontrivial weak solutions $u_0, v_0 \in \WH \cap \Lp{\infty}$ of problem \eqref{problem}  such that $u_0 \ge 0$ and $v_0\le0$ a.e.\,in $\Omega$.
\end{theorem}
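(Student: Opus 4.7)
The plan is to apply the Mountain-Pass Theorem (Theorem \ref{mountain_pass_theorem}) separately to the truncated energy functionals $I_+$ and $I_-$. I would begin by noting that $I_+, I_- \in C^1(\WH)$ and that $I_\pm(0)=0$ because of \eqref{function in zero}. Proposition \ref{proposition_Cerami} already gives that both $I_+$ and $I_-$ satisfy the \textnormal{C}-condition, so it remains to verify the mountain-pass geometry and then translate critical points of $I_\pm$ back into constant sign weak solutions of \eqref{problem}.

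For the geometry near zero, I would invoke Proposition \ref{proposition_0_local_minimizer}: under \eqref{h_1} and \eqref{h_3}, there is $\delta>0$ (or, in the alternative version, some $\lambda>0$) such that
\begin{align*}
	\inf_{\|u\|=\delta} I_\pm(u) \ge m_\delta > 0 = I_\pm(0).
\end{align*}
For the geometry far from zero, fix any nonzero $u_1\in\WH$ with $u_1\ge 0$ a.e.\,in $\Omega$; by Proposition \ref{proposition_I_-infty} we have $I_+(s u_1)\to -\infty$ as $s\to+\infty$, so taking $s>0$ sufficiently large we find $\hat u_1 = s u_1$ with $\|\hat u_1\|>\delta$ and $I_+(\hat u_1)\le 0 < m_\delta$. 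Symmetrically, $I_-(-s u_1)\to -\infty$ furnishes a corresponding $\hat v_1$. Theorem \ref{mountain_pass_theorem} then yields critical points $u_0\in\WH$ of $I_+$ and $v_0\in\WH$ of $I_-$ with $I_\pm(u_0), I_\pm(v_0)\ge m_\delta > 0$, so in particular $u_0\not\equiv 0$ and $v_0\not\equiv 0$.

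Next I would establish the sign of these solutions. Testing $\langle I_+'(u_0),-u_0^-\rangle = 0$ and noting that the truncated nonlinear terms vanish on the set $\{u_0<0\}$, one gets
\begin{align*}
	\rho(-u_0^-) = 0,
\end{align*}
so by Proposition \ref{properties_modular_norm_complete}\textnormal{(v)} we deduce $u_0^-=0$, i.e.\,$u_0\ge 0$ a.e.\,in $\Omega$. On $\{u_0\ge 0\}$ the truncated functional $I_+$ coincides with $I$ (and similarly the derivatives), hence $u_0$ is in fact a weak solution of \eqref{problem}. The same argument applied to $v_0^+$ gives $v_0\le 0$ and shows $v_0$ solves \eqref{problem}.

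Finally, to obtain the $L^\infty$-bound I would cast \eqref{problem} in the form \eqref{problem-Linfty} by setting
\begin{align*}
	\mathcal A(x,t,\xi) = |\xi|^{p(x)-2}\xi + \mu(x)|\xi|^{q(x)-2}\xi, \quad
	\mathcal B(x,t,\xi) = f(x,t) - |t|^{p(x)-2}t, \quad
	\mathcal C(x,t) = g(x,t) - |t|^{p(x)-2}t,
\end{align*}
choose $r\in C_+(\close)$ with $\max\{q(x),\ell(x)\}<r(x)<p^*(x)$ and $l\in C_+(\close)$ with $\max\{p(x),\kappa(x)\}<l(x)<p_*(x)$ (possible thanks to \eqref{h_1}), and verify the growth conditions in \eqref{H_infty} directly from \eqref{h_1}. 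Theorem \ref{bounded-solutions} then yields $u_0,v_0\in\Lp{\infty}$, completing the proof. The main technical point is the mountain-pass geometry: making sure that the infimum on a small sphere is strictly positive (or using the alternative conditional form in Proposition \ref{proposition_0_local_minimizer}) without requiring extra hypotheses beyond \eqref{h_1}--\eqref{h_4}, and carefully pairing superlinear behavior from \eqref{h_2} with the $p$-sublinear behavior in \eqref{h_3} to pin down the sign of the MP value.
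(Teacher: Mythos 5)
Your proposal is correct and follows essentially the same route as the paper: Mountain-Pass Theorem applied to $I_\pm$ (via Propositions \ref{proposition_Cerami}, \ref{proposition_0_local_minimizer}, \ref{proposition_I_-infty}), the test function $v=\mp u^\mp$ to pin down the sign via $\rho(-u_0^-)=0$ and Proposition \ref{properties_modular_norm_complete}, and Theorem \ref{bounded-solutions} for the $L^\infty$-bound. You actually make two steps more explicit than the paper does: you note that $u_0\ge 0$ must be established \emph{before} one can identify a critical point of $I_+$ with a weak solution of \eqref{problem} (the paper states this identification slightly prematurely), and you spell out the choice of $\mathcal A,\mathcal B,\mathcal C,r,l$ needed to verify \eqref{H_infty}, which the paper leaves implicit.
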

\begin{proof}
	Thanks to Proposition \ref{proposition_Cerami}, \ref{proposition_0_local_minimizer} and \ref{proposition_I_-infty}, we can apply Theorem \ref{mountain_pass_theorem} to both functionals $I_\pm$. Then, there exist $u_0, v_0 \in \WH$ such that $I'_+(u_0)=0$ and $I'_-(v_0)=0$, namely $u_0, v_0$ are weak solutions of problem \eqref{problem}. In particular, from Proposition \ref{proposition_0_local_minimizer} it follows that
	\begin{align*}
		I_+(u_0)\ge \inf_{\norm{u} = \delta} I_ +(u) > 0 = I_+(0),
	\end{align*}
	which implies $u_0 \ne 0$. Analogously, $I_-(v_0)>0$ and $v_0 \ne 0$. Finally, since $\lan I'_+(u_0), v \ran =0$ for every $v \in \WH$, we can choose $v = -u_0^-$ and this leads to
	\begin{align*}
		\rho(-u_0^-) = \into f(x, u_0^+) (- u_0^-) \, \diff x + \intor g(x, u_0^+) (- u_0^-) \, \diff \sigma = 0.
	\end{align*}
	From Proposition \ref{properties_modular_norm_complete} it follows that $-u_0^-=0$ a.e.\,in $\Omega$, hence $u_0 \ge 0$ a.e.\,in $\Omega$. Similarly, we can test $\lan I'_-(v_0), v_0^+ \ran =0$ and derive that $v_0 \le 0$ a.e in $\Omega$. Finally, we know that $u_0$ and $v_0$ are bounded functions because of Theorem \ref{bounded-solutions}.
\end{proof}

\section{Sign changing solution}\label{Section:sign changing}
In this section we present our main result on the existence of a sign-changing solution through the Nehari manifold approach, in addition to the two constant sign solutions obtained in Section \ref{Section:constant sign}. We indicate with $\mathcal{N}$ the Nehari manifold of $I$, defined by
\begin{align*}
	\mathcal{N} = \l\{ u \in \WH \, : \, \lan I'(u),u\ran =0, \, u \ne0\r\}.
\end{align*}
Clearly, any nontrivial weak solution of \eqref{problem} belongs to $\mathcal{N}$, because the weak solutions of \eqref{problem} are exactly the critical points of $I$. Since we are interested in sign-changing solutions, we introduce the following subset of $\mathcal{N}$
\begin{align*}
	\mathcal{N}_0 = \l\{ u \in \WH \, : \, \pm u^\pm \in \mathcal{N}\r\}.
\end{align*}
For an overview on the method of the Nehari manifold, we refer to the book chapter of Szulkin-Weth \cite{Szulkin-Weth-2010}.

First, we prove some properties of the Nehari manifold $\mathcal{N}$ (Proposition \ref{properties_Nehari_manifold}) and of the energy \mbox{functional} $I$ restricted to $\mathcal{N}$ (Proposition \ref{proposition_I_coericive}).

\begin{proposition}
	\label{properties_Nehari_manifold}
	Let hypotheses \eqref{H2}, \eqref{h_1}--\eqref{h_3} and \eqref{h_5} be satisfied. Then, for any $u \in \WH\setminus\{0\}$, there exists a unique $s_u>0$ such that $s_u u \in \mathcal{N}$.\\
	Moreover, one has
	\begin{align*}
		I(s_u u)>0 \quad \text{and} \quad I(s_u u) > I(s u) \quad \text{for all }s>0 \text{ with } s\ne s_u.
	\end{align*}
	and
	\begin{align*}
		\partial_s I(su) > 0\quad \text{for }0 < s < s_u
		\quad \text{and} \quad
		\partial_s I(su) < 0\quad \text{for }s > s_u.
	\end{align*}
\end{proposition}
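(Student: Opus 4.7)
The plan is to introduce the fibering map $\psi_u(s) := I(su)$ for $s>0$, use $\langle I'(su),su\rangle = s\psi_u'(s)$ to read the constraint $s u\in\mathcal{N}$ as $\psi_u'(s)=0$, and study the normalized function
\begin{align*}
	\Phi_u(s):=\frac{\psi_u'(s)}{s^{q_+-1}}
	&= \into s^{p(x)-q_+}|\nabla u|^{p(x)}\,\diff x + \into \mu(x)\, s^{q(x)-q_+}|\nabla u|^{q(x)}\,\diff x \\
	&\quad + \into s^{p(x)-q_+}|u|^{p(x)}\,\diff x + \intor s^{p(x)-q_+}|u|^{p(x)}\,\diff \sigma \\
	&\quad - \into \frac{f(x,su)u}{s^{q_+-1}}\,\diff x - \intor \frac{g(x,su)u}{s^{q_+-1}}\,\diff \sigma.
\end{align*}
I will show that $\Phi_u$ is strictly decreasing on $(0,\infty)$ with $\lim_{s\to 0^+}\Phi_u(s)=+\infty$ and $\lim_{s\to\infty}\Phi_u(s)=-\infty$; the intermediate value theorem then produces a unique $s_u>0$ with $\Phi_u(s_u)=0$, equivalently $s_u u\in\mathcal{N}$, and the sign pattern of $\psi_u'$ forces $\psi_u$ strictly increasing on $(0,s_u)$ and strictly decreasing on $(s_u,\infty)$, giving $I(s_u u)>I(su)$ for every $s>0$ with $s\neq s_u$ and in particular $I(s_u u)>\psi_u(0)=0$.

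The strict monotonicity of $\Phi_u$ combines two ingredients. On the modular side each $s^{p(x)-q_+}$ is strictly decreasing because $p(x)<q(x)\leq q_+$, while $s^{q(x)-q_+}$ is non-increasing, and since $u\neq 0$ at least one of the modular integrals is strictly positive. On the reaction side, a case analysis on the sign of $u(x)$ together with \eqref{h_5} shows that $s\mapsto f(x,su)u/s^{q_+-1}$ is non-decreasing: for $u(x)>0$ it equals $[f(x,su)/(su)^{q_+-1}]\,u^{q_+}$, which is increasing because $t\mapsto f(x,t)/t^{q_+-1}$ is increasing on $(0,\infty)$; for $u(x)<0$, writing $v=-u$, it equals $-[f(x,-sv)/(sv)^{q_+-1}]\,v^{q_+}$, still increasing in $s$ because $t\mapsto f(x,t)/|t|^{q_+-1}$ is increasing on $(-\infty,0)$ and $t=-sv$ moves further from zero as $s$ grows. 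The $g$-integrand behaves identically, so subtracting these non-decreasing contributions preserves strict decrease.

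The two limits are the technical heart. As $s\to\infty$ the modular contribution to $\Phi_u$ stays bounded (only points with $q(x)=q_+$ give a nonvanishing limit), whereas \eqref{h_2} together with \eqref{h_5} implies $f(x,su)(su)/|su|^{q_+}\to+\infty$ pointwise on $\{u\neq 0\}$ (compare Lemma 4.4 of \cite{Crespo-Blanco-Winkert-2022}); Fatou's lemma then forces $\int f(x,su)u/s^{q_+-1}\,\diff x \to +\infty$, so $\Phi_u(s)\to -\infty$. For $s\to 0^+$ I first note that \eqref{h_1} together with \eqref{h_2} forces $\ell_->q_+$ and $\kappa_->q_+$, since at any $x$ where $\ell(x)\leq q_+$ the upper bound $|F(x,t)|\leq K_1(|t|+|t|^{\ell(x)}/\ell(x))$ would keep $F(x,t)/|t|^{q_+}$ bounded as $|t|\to\infty$, contradicting \eqref{h_2}. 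Using \eqref{h_3} (in its near-zero form, as needed for the interplay with the modular) to extract the pointwise estimate $|f(x,t)|\leq \varepsilon|t|^{p(x)-1}+c_\varepsilon|t|^{\ell(x)-1}$, one obtains
\begin{align*}
	\into \frac{|f(x,su)u|}{s^{q_+-1}}\,\diff x \leq \varepsilon \into s^{p(x)-q_+}|u|^{p(x)}\,\diff x + c_\varepsilon \into s^{\ell(x)-q_+}|u|^{\ell(x)}\,\diff x.
\end{align*}
For $\varepsilon<1$ the first piece is absorbed by the positive $s^{p(x)-q_+}|u|^{p(x)}$ modular contribution, while the second vanishes as $s\to 0^+$ by dominated convergence since $\ell(x)-q_+>0$; the analogous estimate on $\partial\Omega$ uses $\kappa_->q_+$. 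Since the positive modular part diverges (at least one factor $s^{p(x)-q_+}$ blows up), $\Phi_u(s)\to+\infty$ follows. The main technical obstacle is precisely this $s\to 0^+$ analysis: extracting $\ell_->q_+$ and $\kappa_->q_+$ from \eqref{h_1}--\eqref{h_2} and then absorbing the $\varepsilon$-piece cleanly into the divergent modular term is where the balance between modular and reaction is delicate.
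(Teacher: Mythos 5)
Your proof is correct and reaches the same conclusion, but it takes a genuinely different route for the existence part. The paper defines $\phi_u(s)=I(su)$, invokes the already-established mountain-pass geometry (Propositions \ref{proposition_0_local_minimizer} and \ref{proposition_I_-infty}) to get $\phi_u>0$ near $0$ and $\phi_u(s)\to-\infty$ as $s\to\infty$, and then extracts a maximizer $s_u$ via the extreme value theorem; uniqueness is proven afterwards by the same normalized-derivative argument you use. You instead work directly with $\Phi_u(s)=\phi_u'(s)/s^{q_+-1}$, establish that it is strictly decreasing and compute the two boundary limits from scratch, and then obtain existence \emph{and} uniqueness of $s_u$ in one stroke by the intermediate value theorem. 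Your route is more self-contained but requires the two delicate limit computations that the paper delegates to its earlier propositions. On the $s\to\infty$ side your Fatou argument is fine provided one observes that $(\mathrm{h}_5)$ makes $s\mapsto f(x,su)u/s^{q_+-1}$ increasing, so a lower integrable bound is available from $s=1$; this is exactly the type of observation the paper makes when it verifies that the right-hand side of \eqref{estimate_5} is increasing. On the $s\to 0^+$ side one small caveat: your derivation of $\ell_->q_+$ from \eqref{h_1}--\eqref{h_2} is slightly over-stated, since \eqref{h_2} holds only for a.a.\,$x$ and therefore yields $\ell(x)>q_+$ a.e.\ but not automatically a strict inequality at the minimum of the continuous function $\ell$ (note the paper explicitly \emph{assumes} $q_+<\ell_-,\kappa_-$ in Proposition \ref{proposition_0_local_minimizer} rather than deriving it). That said, your dominated-convergence step only uses the pointwise a.e.\ inequality $\ell(x)-q_+>0$ with the dominating function $|u|^{\ell(x)}$, so the $s\to 0^+$ limit goes through. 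Finally, you correctly read \eqref{h_3} in its near-zero form (there is a typo in the displayed condition, which as written as a $t\to\pm\infty$ limit would contradict \eqref{h_2}); this is needed both for your $\varepsilon$-estimate and for the paper's Proposition \ref{proposition_lower_estimates_I}.
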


\begin{proof}
	For any fixed $u \in \WH \setminus\{0\}$ we define $\phi_u\colon [0, \infty ) \to \R$ as follows
	\begin{align*}
		\phi_u(s) = I(su) \quad \text{for all } s \in [0, \infty ).
	\end{align*}
	Clearly, $\phi_u$ belongs to $C\l([0, \infty )\r)$ and $C^1\l((0, \infty )\r)$. From Propositions \ref{proposition_0_local_minimizer} and \ref{proposition_I_-infty} we derive that there exist $\delta, M>0$ such that
	\begin{align} \label{estimate_4}
		\phi_u(s)>0 \quad \text{for } 0<t<\delta \quad \text{and } \quad \phi_u(s)<0 \quad \text{for } t>M.
	\end{align}
	Then, applying the extreme value theorem, we get in particular that $\phi_u$ admits a local maximum, i.e., there exists $0 < s_u \le M$ such that
	\begin{align*}
		\sup_{s \in [0, \infty )} \phi_u(s) = \max_{s \in [0,M]} \phi_u(s) = \phi_u(s_u).
	\end{align*}
	Since $s_u$ is also a critical point of $\phi_u$, in combination with $\phi_u'(s)=\lan I'(su),u\ran$ for every $s>0$, one has
	\begin{align*}
		\phi_u'(s_u) =\lan I'(s_u u),u\ran = 0 \quad \Longrightarrow \quad s_u u \in \mathcal{N}.
	\end{align*}

	\textbf{Claim:} $s_u$ is unique.\\
	From assumption \eqref{h_5} we have that
	\begin{align*}
		& s \mapsto \frac{f(x,su)}{s^{q_+-1}|u|^{q_+-1}} \text{ increasing} \Rightarrow s \mapsto \frac{f(x,su)u}{s^{q_+-1}} \text{ increasing in } \{x \in \Omega\,:\, u(x)>0\},          \\
		& s \mapsto \frac{f(x,su)}{s^{q_+-1}|u|^{q_+-1}} \text{ decreasing} \Rightarrow s \mapsto \frac{f(x,su)u}{s^{q_+-1}} \text{ increasing in } \{x \in \Omega\,:\, u(x)<0\},          \\
		& s \mapsto \frac{g(x,su)}{s^{q_+-1}|u|^{q_+-1}} \text{ increasing} \Rightarrow s \mapsto \frac{g(x,su)u}{s^{q_+-1}} \text{ increasing in } \{x \in \partial\Omega\,:\, u(x)>0\},  \\
		& s \mapsto \frac{g(x,su)}{s^{q_+-1}|u|^{q_+-1}} \text{ decreasing} \Rightarrow s \mapsto \frac{g(x,su)u}{s^{q_+-1}} \text{ increasing in } \{x \in \partial\Omega\,:\, u(x)<0\}.
	\end{align*}
	Multiplying by $1/s^{q_+-1}$ the equation $\phi_u'(s)=\lan I'(su),u\ran=0$ (consider only $s>0$), which is a necessary condition for $su \in \mathcal{N}$, we obtain
	\begin{align*}
			 & \into \l(\frac{|\nabla u|^{p(x)}}{s^{q_+-p(x)}} + \frac{\mu(x)|\nabla u|^{q(x)}}{s^{q_+-q(x)}} \r) \,\diff x + \into \frac{|u|^{p(x)}}{s^{q_+-p(x)}} \,\diff x + \intor \frac{|u|^{p(x)}}{s^{q_+-p(x)}} \,\diff \sigma \\
			 & -\into \frac{f(x,su)u}{s^{q_+-1}} \,\diff x - \intor \frac{g(x,su)u}{s^{q_+-1}} \,\diff \sigma = 0.
	\end{align*}
	As functions of $s$, the left-hand side is strictly decreasing, because it is so in the sets $\{x \in \Omega\,:\, \nabla u \ne 0\}$, $\{x \in \Omega\,:\, u \ne 0\}$ and $\{x \in \partial\Omega\,:\, u \ne 0\}$ and at least decreasing in the rest (recall that $p(x) < q(x) \le q_+$ for all $x \in \close$ and the previous comments for $f$ and $g$). Consequently, there can be at most one single value $s_u>0$ for which the equation holds, namely there exists a unique $s_u>0$ such that $s_u u \in \mathcal{N}$.

	Finally, since $\phi'_u (s)$ has constant sign for $0 < s < s_u$ and $s > s_u$, from \eqref{estimate_4} we can derive
	\begin{align*}
		\phi'_u(s) >0 \quad \text{for } 0<s<s_u \quad \text{and } \quad \phi'_u(s) <0 \quad \text{for } s>s_u.
	\end{align*}
	Thus $s_u$ is a strict maximum for $\phi_u$ and this completes the proof.
\end{proof}

\begin{proposition}
	\label{proposition_I_coericive}
	Let hypotheses \eqref{H2}, \eqref{h_1}--\eqref{h_3} and \eqref{h_5} be satisfied. Then, the functional $I|_{\mathcal{N}}$ is sequentially coercive, namely for any sequence $\{u_n\}_{n \in \N} \subset \mathcal{N}$ such that $\norm{u_n} \xrightarrow{n \to \infty} \infty $ one has $I(u_n) \xrightarrow{n \to \infty} \infty $.
\end{proposition}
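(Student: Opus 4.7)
The plan is to argue by contradiction, mimicking the strategy used in Proposition \ref{proposition_Cerami}. Assume there exists $\{u_n\}_{n\in\N}\subset\mathcal{N}$ with $\|u_n\|\to\infty$ along which $I(u_n)\le M$ for some $M>0$. The goal is to derive that, in fact, $\{u_n\}$ must be bounded in $\WH$.

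\textbf{First step: extract the \eqref{h_4}-bound.} Since $u_n\in\mathcal N$, one has $\langle I'(u_n),u_n\rangle=0$, hence $q_+ I(u_n)=q_+I(u_n)-\langle I'(u_n),u_n\rangle$. Expanding the right-hand side yields
\begin{align*}
q_+I(u_n) &= \into\left(\tfrac{q_+}{p(x)}-1\right)|\nabla u_n|^{p(x)}\,\diff x+\into \mu(x)\left(\tfrac{q_+}{q(x)}-1\right)|\nabla u_n|^{q(x)}\,\diff x\\
&\quad +\into\left(\tfrac{q_+}{p(x)}-1\right)|u_n|^{p(x)}\,\diff x+\intor\left(\tfrac{q_+}{p(x)}-1\right)|u_n|^{p(x)}\,\diff\sigma\\
&\quad +\into\bigl(f(x,u_n)u_n-q_+F(x,u_n)\bigr)\,\diff x+\intor\bigl(g(x,u_n)u_n-q_+G(x,u_n)\bigr)\,\diff\sigma.
\end{align*}
Since $p(x)<q(x)\le q_+$, all four modular integrals are nonnegative, so
\begin{align*}
\into\bigl(f(x,u_n)u_n-q_+F(x,u_n)\bigr)\,\diff x+\intor\bigl(g(x,u_n)u_n-q_+G(x,u_n)\bigr)\,\diff\sigma\le q_+ M.
\end{align*}

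\textbf{Second step: Lebesgue-type bounds.} Exactly as in the proof of Claim~1 of Proposition \ref{proposition_Cerami}, hypothesis \eqref{h_4} gives constants $\hat K_3,\tilde K_3,\hat K_4,\tilde K_4>0$ with
\begin{align*}
f(x,t)t-q_+F(x,t)&\ge \hat K_3|t|^{\alpha_-}-\tilde K_3 \text{ for }t\ge0,\quad \ge\hat K_3|t|^{\beta_-}-\tilde K_3 \text{ for }t\le0,
\end{align*}
and similarly for $g$ with exponents $\zeta_-,\theta_-$. Substituting into the previous inequality, the sequences $\|u_n^+\|_{\alpha_-}$, $\|u_n^-\|_{\beta_-}$, $\|u_n^+\|_{\zeta_-,\partial\Omega}$, $\|u_n^-\|_{\theta_-,\partial\Omega}$ are all bounded, and therefore so are $\|u_n\|_{\alpha_-}$ and $\|u_n\|_{\zeta_-,\partial\Omega}$ (using $L^{\beta_-}\hookrightarrow L^{\alpha_-}$ and $L^{\theta_-}(\partial\Omega)\hookrightarrow L^{\zeta_-}(\partial\Omega)$, assuming WLOG $\alpha_-\le\beta_-$, $\zeta_-\le\theta_-$).

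\textbf{Third step: modular estimate via \eqref{h_1}.} Using $u_n\in\mathcal N$ once more,
\begin{align*}
\rho(u_n)=\into f(x,u_n)u_n\,\diff x+\intor g(x,u_n)u_n\,\diff\sigma\le K_1\bigl(\|u_n\|_1+\|u_n\|_{\ell_+}^{\ell_+}\bigr)+K_2\bigl(\|u_n\|_{1,\partial\Omega}+\|u_n\|_{\kappa_+,\partial\Omega}^{\kappa_+}\bigr).
\end{align*}
For $\|u_n\|\ge 1$, Proposition \ref{properties_modular_norm_complete}(iv) yields $\|u_n\|^{p_-}\le\rho(u_n)$.

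\textbf{Fourth step: interpolation and contradiction.} Since $\alpha_-<\ell_+<(p_-)^*$ and $\zeta_-<\kappa_+<(p_-)_*$ (by \eqref{h_1}--\eqref{h_4}), choose $s,\tau\in(0,1)$ with
\begin{align*}
\tfrac{1}{\ell_+}=\tfrac{s}{(p_-)^*}+\tfrac{1-s}{\alpha_-},\qquad \tfrac{1}{\kappa_+}=\tfrac{\tau}{(p_-)_*}+\tfrac{1-\tau}{\zeta_-},
\end{align*}
and apply the interpolation inequality together with the embeddings $\WH\hookrightarrow L^{(p_-)^*}(\Omega)$ and $\WH\hookrightarrow L^{(p_-)_*}(\partial\Omega)$, using the uniform bounds in Step~2, to obtain
\begin{align*}
\|u_n\|^{p_-}\le C\bigl(1+\|u_n\|^{s\ell_+}+\|u_n\|^{\tau\kappa_+}\bigr).
\end{align*}
The precise computation carried out in Claim~2 of Proposition \ref{proposition_Cerami}, relying crucially on the quantitative bounds $\min\{\alpha_-,\beta_-\}>(\ell_+-p_-)N/p_-$ and $\min\{\zeta_-,\theta_-\}>(\kappa_+-p_-)(N-1)/(p_-{-}1)$ of \eqref{h_4}, shows $s\ell_+<p_-$ and $\tau\kappa_+<p_-$. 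Hence $\{u_n\}$ is bounded in $\WH$, contradicting $\|u_n\|\to\infty$.

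The main (only) obstacle is the bookkeeping in Step~4, but it is literally the argument already carried out in Claim~2 of Proposition \ref{proposition_Cerami}; the rest of the proof is lighter than Cerami since the Nehari identity $\langle I'(u_n),u_n\rangle=0$ is exact, removing the need for the $\varepsilon_n$-terms and for the separate truncation argument leading to $-u_n^-\to 0$.
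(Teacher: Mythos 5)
Your proof has a genuine gap: it relies throughout on hypothesis \eqref{h_4}, which is \emph{not} among the assumptions of Proposition~\ref{proposition_I_coericive}. The proposition only assumes \eqref{H}, \eqref{h_1}--\eqref{h_3} and \eqref{h_5}. Your Step~2 (the lower bound $f(x,t)t-q_+F(x,t)\ge\hat K_3|t|^{\alpha_-}-\tilde K_3$ and its boundary analogue) and your Step~4 (the quantitative inequalities $s\ell_+<p_-$, $\tau\kappa_+<p_-$) are exactly the places where \eqref{h_4} enters in the Cerami argument, and neither step survives without it. Under the stated hypotheses, the weaker condition \eqref{h_5} only gives $f(x,t)t-q_+F(x,t)\ge 0$, with no quantitative lower growth, so your Step~2 collapses and the whole contradiction scheme fails. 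In short, you would be proving a correct statement but not \emph{this} statement; the paper is deliberately economical with hypotheses here.

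The paper's actual route avoids \eqref{h_4} entirely and is not a contradiction-to-boundedness argument. It normalizes $y_n=u_n/\norm{u_n}$, extracts a weak limit $y$, and first shows $y=0$: if $y\neq0$, dividing the upper bound for $I(u_{n_k})$ by $\norm{u_{n_k}}^{q_+}$, using the superlinearity \eqref{h_2} of $F,G$ together with Fatou's lemma and the lower bound \eqref{estimate_2}, forces $I(u_{n_k})/\norm{u_{n_k}}^{q_+}\to-\infty$, contradicting $I>0$ on $\mathcal{N}$ (which follows from Proposition~\ref{properties_Nehari_manifold}, hence from \eqref{h_5}). Then, since $y_{n_k}\weak0$, for any fixed $s>1$ the Nehari maximality $I(u_{n_k})\geq I(sy_{n_k})$ plus strong continuity of the $F,G$ terms yields $I(u_{n_k})\geq s^{p_-}/p_--1$ for $k$ large, and $s$ arbitrary gives the conclusion. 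This is where \eqref{h_2} and \eqref{h_5} replace \eqref{h_4}. If you want to keep your argument, you must either add \eqref{h_4} to the hypotheses (which would weaken the proposition and break the dependency structure elsewhere in the section) or switch to the normalization-plus-Fatou approach.
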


\begin{proof}
	Let $\{u_n\}_{n \in \N} \subset \mathcal{N}$ be a sequence such that $\norm{u_n} \xrightarrow{n \to \infty} \infty $ and put
	\begin{align}
		\label{def_y_n}
		y_n = \frac{u_n}{\norm{u_n}} \quad \text{for all $n \in \N$}.
	\end{align}
	Since $\{y_n\}_{n\in\N}$ is bounded in the reflexive space $\WH$ and due to the compact embeddings $\WH\hookrightarrow L^{\ell(\cdot)}(\Omega)$ as well as $\WH\hookrightarrow L^{\kappa(\cdot)}(\partial\Omega)$ (see Proposition \ref{proposition_embeddings}(iii), (v)), there exists a subsequence $\{y_{n_k}\}_{k \in \N}$ and $y \in \WH$ such that
	\begin{align}\label{convergence-properties}
		\begin{split}
			&y_{n_k} \weak y \quad \text{in } \WH,\\
			&y_{n_k} \to y \quad\text{in } \Lp{\ell(\cdot)}  \text{ and pointwisely a.e.\,in }\Omega,\\
			&y_{n_k} \to y \quad\text{in } L^{\kappa(\cdot)}(\partial\Omega)  \text{ and pointwisely a.e.\,in }\partial \Omega.
		\end{split}
	\end{align}

	\textbf{Claim:} $y=0$.\\
	By contradiction, suppose that $y\ne0$. As $\norm{u_n} \to \infty $, there exists $k_0 \in \N$ such that for every $k \ge k_0$ one has $\norm{u_{n_k}}\ge1$ and
	\begin{align*}
		I(u_{n_k})
		& \le \frac{1}{p_-} \rho(u_{n_k}) - \into F(x,u_{n_k})\,\diff x - \intor G(x,u_{n_k})\,\diff\sigma  \\
		& \le \frac{1}{p_-} \norm{u_{n_k}}^{q_+} - \into F(x,u_{n_k})\,\diff x - \intor G(x,u_{n_k})\,\diff\sigma,
	\end{align*}
	where we have used Proposition \ref{properties_modular_norm_complete}(iv). Dividing by $\norm{u_{n_k}}^{q_+}$ and taking \eqref{def_y_n} into account, we obtain
	\begin{align}\label{estimate_1}
		\frac{I(u_{n_k})}{\norm{u_{n_k}}^{q_+}} \le \frac{1}{p_-} - \into \frac{F(x,u_{n_k})}{|u_{n_k}|^{q_+}} |y_{n_k}|^{q_+}\,\diff x - \intor \frac{G(x,u_{n_k})}{|u_{n_k}|^{q_+}} |y_{n_k}|^{q_+}\,\diff\sigma.
	\end{align}
	Now, we observe that if $f$ and $g$ fulfill \eqref{h_1} and \eqref{h_2}, then there exist $M_9,M_{10} >0$ such that
	\begin{equation}\label{estimate_2}
		\begin{aligned}
			& F(x,t)>-M_9 \quad \text{for a.a.\,}x \in \Omega \text{ and for all }t \in \R,\\
			& G(x,t)>-M_{10} \quad \text{for a.a.\,}x \in \partial\Omega \text{ and for all }t \in \R.
		\end{aligned}
	\end{equation}
	Setting $\Omega_0=\l\{x \in \Omega \,:\, y(x)=0\r\}$, by using \eqref{estimate_2}, \eqref{h_2}, \eqref{convergence-properties} and Fatou's Lemma, we get
	\begin{align*}
		&\lim_{k \to \infty} \into \frac{F(x,u_{n_k})}{|u_{n_k}|^{q_+}} |y_{n_k}|^{q_+}\,\diff x\\
		& = \lim_{k \to \infty} \l( \int_{\Omega\setminus\Omega_0} \frac{F(x,u_{n_k})}{|u_{n_k}|^{q_+}} |y_{n_k}|^{q_+}\,\diff x + \int_{\Omega_0} \frac{F(x,u_{n_k})}{\norm{u_{n_k}}^{q_+}} \r)\\
		& \ge \int_{\Omega\setminus\Omega_0} \l(\lim_{k \to \infty} \frac{F(x,u_{n_k})}{|u_{n_k}|^{q_+}} |y_{n_k}|^{q_+}\r)\,\diff x - \lim_{k \to \infty} \frac{M_9 |\Omega_0|}{\norm{u_{n_k}}^{q_+}} \\
		& = \infty.
	\end{align*}
	Analogously, for $\Sigma_0 = \l\{x \in \partial\Omega \,:\, y(x)=0\r\}$, we have
	\begin{align*}
		&\lim_{k \to \infty} \intor \frac{G(x,u_{n_k})}{|u_{n_k}|^{q_+}} |y_{n_k}|^{q_+}\,\diff \sigma\\
		& = \lim_{k \to \infty} \l(\int_{\partial\Omega\setminus\Sigma_0} \frac{G(x,u_{n_k})}{|u_{n_k}|^{q_+}} |y_{n_k}|^{q_+}\,\diff \sigma + \int_{\Sigma_0} \frac{G(x,u_{n_k})}{\norm{u_{n_k}}^{q_+}} \r)\\
		& \ge   \int_{\partial\Omega\setminus\Sigma_0} \l(\lim_{k \to \infty} \frac{G(x,u_{n_k})}{|u_{n_k}|^{q_+}} |y_{n_k}|^{q_+}\r) \,\diff \sigma - \lim_{k \to \infty} \frac{M_{10} |\Sigma_0|}{\norm{u_{n_k}}^{q_+}} \\
		& = \infty.
	\end{align*}
	Hence, passing to the limit as $k \to \infty$ in \eqref{estimate_1}, it follows that
	\begin{align*}
		\lim_{k \to \infty} \frac{I(u_{n_k})}{\norm{u_{n_k}}^{q_+}} = - \infty,
	\end{align*}
	which is a contradiction with $\{u_n\}_{n \in \N} \subseteq \mathcal{N}$ that implies $I(u_n)>0$ for all $n \in \N$ (see Proposition \ref{properties_Nehari_manifold}). Thus, the proof of our claim is complete.

	Recall that $u_{n_k} \in \mathcal{N}$ for every $k \in \N$, from Proposition \ref{properties_Nehari_manifold} it follows that $I(u_{n_k}) \ge I(s u_{n_k})$ for every $s>0, s\ne1$ and for all $k \in \N$. Fixing $s>1$ and using Proposition \ref{properties_modular_norm_complete}(iv), one has
	\begin{align*}
		I(u_{n_k})
		& \ge I(s y_{n_k}) \\
		& \ge \frac{1}{q_+} \rho(s y_{n_k}) - \into F(x,s y_{n_k})\,\diff x - \intor G(x,s y_{n_k}) \,\diff \sigma \\
		& \ge \frac{1}{q_+} \norm{s y_{n_k}}^{p_-} - \into F(x,s y_{n_k})\,\diff x - \intor G(x,s y_{n_k}) \,\diff \sigma \\
		& = \frac{s^{p_-}}{q_+} - \into F(x,s y_{n_k})\,\diff x - \intor G(x,s y_{n_k}) \,\diff \sigma.
	\end{align*}
	Moreover, as a consequence of the assumptions on the nonlinear functions $f$ and $g$, it follows that the integral terms are strongly continuous (see for example \cite[Lemma 4.4]{Crespo-Blanco-Winkert-2022}). Since $s y_{n_k} \weak 0$, we derive that there exists $k_1 \in \N$ such that
	\begin{align*}
		I(u_{n_k}) \ge \frac{s^{p_-}}{q_+} -1 \quad \text{for all $k \ge k_1$}.
	\end{align*}
	From the arbitrariness of $s>1$, we get $I(u_{n_k}) \to \infty$ as $k \to \infty$, which implies that $I(u_n) \xrightarrow{n \to \infty} \infty$ and our statement is achieved.
\end{proof}

Now, we are able to prove the existence of a minimizer of $I$ restricted to $\mathcal{N}_0$.
\begin{proposition}
	Let hypotheses \eqref{H2}, \eqref{h_1}--\eqref{h_3} and \eqref{h_5} be satisfied. Then
	\begin{align*}
		\inf_{u \in \mathcal{N}} I(u) > 0 \quad \text{and} \quad \inf_{u \in \mathcal{N}_0} I(u) > 0.
	\end{align*}
\end{proposition}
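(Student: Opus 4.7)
Since $\mathcal{N}_0 \subseteq \mathcal{N}$ and the disjoint supports of $u^+$ and $-u^-$ give $I(u) = I(u^+) + I(-u^-)$ for every $u \in \mathcal{N}_0$, one has $\inf_{\mathcal{N}_0} I \geq 2 \inf_{\mathcal{N}} I$, so it suffices to prove $\inf_{\mathcal{N}} I > 0$. I would argue by contradiction: take a minimizing sequence $\{u_n\}_{n\in\N} \subseteq \mathcal{N}$ with $I(u_n) \to m := \inf_{\mathcal{N}} I$, and suppose $m \leq 0$.

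By Proposition \ref{proposition_I_coericive}, $\{\norm{u_n}\}_{n\in\N}$ is bounded, so the normalizations $v_n := u_n/\norm{u_n}$ satisfy $\norm{v_n} = 1$ and hence $\rho(v_n) = 1$ by Proposition \ref{properties_modular_norm_complete}\textnormal{(ii)}. Passing to a subsequence, $v_n \weak v$ in $\WH$, and by the compact embeddings in Proposition \ref{proposition_embeddings}\textnormal{(iii)},\textnormal{(v)}, $v_n \to v$ strongly in $L^{\ell(\cdot)}(\Omega)$ and in $L^{\kappa(\cdot)}(\partial\Omega)$. The key link with the Nehari structure is given by Proposition \ref{properties_Nehari_manifold}: since $u_n \in \mathcal{N}$ is the unique ray-maximizer, $I(u_n) = \max_{s > 0} I(s u_n)$, and taking $s = R/\norm{u_n}$ yields the pointwise comparison $I(u_n) \ge I(R v_n)$ valid for every $R > 0$.

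If $v \neq 0$, I would apply Proposition \ref{properties_Nehari_manifold} to $v$ to obtain $s_v > 0$ with $I(s_v v) > 0$. The modular part of $I$ is a sum of convex continuous functionals on $\WH$ and is therefore sequentially weakly lower semicontinuous, while the growth bound \eqref{h_1}, combined with the strong convergences above and Vitali's theorem, yields $\int_\Omega F(x, s_v v_n)\,\diff x \to \int_\Omega F(x, s_v v)\,\diff x$ and the analogous limit on $\partial\Omega$ for $G$. Consequently $\liminf_n I(s_v v_n) \ge I(s_v v) > 0$, and combined with $I(u_n) \ge I(s_v v_n)$ this forces $\liminf_n I(u_n) > 0$, contradicting $m \leq 0$.

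If instead $v = 0$, then $v_n \to 0$ strongly in $L^{\ell(\cdot)}(\Omega)$ and $L^{\kappa(\cdot)}(\partial\Omega)$, and \eqref{h_1} forces $\int_\Omega F(x, v_n)\,\diff x \to 0$ and $\int_{\partial\Omega} G(x, v_n)\,\diff\sigma \to 0$. Since $\frac{1}{p(x)}, \frac{1}{q(x)} \ge \frac{1}{q_+}$, the choice $R = 1$ gives
\[
I(v_n) \ge \tfrac{1}{q_+}\rho(v_n) - \int_\Omega F(x, v_n)\,\diff x - \int_{\partial\Omega} G(x, v_n)\,\diff\sigma = \tfrac{1}{q_+} - o(1),
\]
so $\liminf_n I(u_n) \ge \liminf_n I(v_n) \ge 1/q_+ > 0$, again contradicting $m \leq 0$. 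The main delicate point is the passage from an arbitrary $u_n \in \mathcal{N}$ to the rescaled comparison value $I(R v_n)$; this is made possible precisely by the ray-maximization property of Proposition \ref{properties_Nehari_manifold}, which lets us bypass any subtle analysis of $f$ or $F$ near zero.
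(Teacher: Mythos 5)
Your proof is correct, but it takes a substantially longer route than necessary, and the paper's argument is worth noting because it shows the result follows from two already-established facts in a single line. For any $u \in \mathcal{N}$, Proposition \ref{properties_Nehari_manifold} gives $I(u) = \max_{s>0} I(su)$; choosing $s = \delta/\norm{u}$ with $\delta$ as in Proposition \ref{proposition_0_local_minimizer} produces a \emph{uniform} lower bound
\begin{align*}
	I(u) \ge I\l(\tfrac{\delta}{\norm{u}}\,u\r) \ge \inf_{\norm{w}=\delta} I(w) > 0 \quad \text{for every } u \in \mathcal{N},
\end{align*}
and the $\mathcal{N}_0$ estimate then follows exactly as you wrote it. Your version instead extracts a minimizing sequence, invokes Proposition \ref{proposition_I_coericive} to get boundedness, normalizes, passes to a weak subsequential limit $v$, and splits into the cases $v=0$ and $v\neq 0$. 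Both cases are handled correctly (the compact embeddings you use are justified since $\ell_+<(p_-)^*\le p^*(x)$ and $\kappa_+<(p_-)_*\le p_*(x)$; the weak lower semicontinuity of the modular and strong continuity of the $F,G$ terms are as you say; and the contradiction wrapper is harmless though unnecessary, since you actually prove $m>0$ directly). The trade-off is that your route imports the full coercivity machinery and a weak-compactness argument to re-derive a bound that the ray-maximizing property already hands you for free once it is combined with the mountain-pass geometry on a fixed small sphere; in effect, your case $v=0$ reproduces the estimate behind Proposition \ref{proposition_0_local_minimizer} along a sequence, while your case $v\neq 0$ re-invokes Proposition \ref{properties_Nehari_manifold} (whose proof itself already rests on Proposition \ref{proposition_0_local_minimizer}). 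The paper's direct rescaling is both shorter and yields an explicit positive lower bound, $\inf_{\norm{w}=\delta} I(w)$, rather than merely nonvanishing of the infimum.
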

\begin{proof}
	Fix $u \in \mathcal{N}$. Then, from Proposition \ref{properties_Nehari_manifold} we have that $I(u) \ge I(su)$ for all $s>0, s \ne 1$. In particular, applying Proposition \ref{proposition_0_local_minimizer}, it follows that
	\begin{align*}
		I(u)\ge I\l(\frac{\delta}{\norm{u}}u\r) \ge \inf_{\norm{u} = \delta} I(u)>0 \quad \text{for all $u \in \mathcal{N}$},
	\end{align*}
	that implies
	\begin{align*}
		\inf_{u \in \mathcal{N}} I(u) > 0.
	\end{align*}
	Now, fix $u \in \mathcal{N}_0$. Since by definition $\pm u^\pm \in \mathcal{N}$, we get
	\begin{align*}
		I(u)= I(u^+)+I(-u^-) \ge 2 \inf_{u \in \mathcal{N}} I(u) > 0  \quad \text{for all $u \in \mathcal{N}_0$},
	\end{align*}
	so we obtain
	\begin{align*}
		\inf_{u \in \mathcal{N}_0} I(u) > 0.
	\end{align*}
\end{proof}

\begin{proposition}
	\label{proposition_existence_minimum}
	Let hypotheses \eqref{H2}, \eqref{h_1}--\eqref{h_3} and \eqref{h_5} be satisfied. Then, there exists $w_0 \in \mathcal{N}_0$ such that
	\begin{align*}
		I(w_0) = \inf_{u \in \mathcal{N}_0} I(u).
	\end{align*}
\end{proposition}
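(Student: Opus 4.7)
The plan is to take a minimizing sequence $\{u_n\} \subset \mathcal{N}_0$ with $I(u_n) \to c := \inf_{\mathcal{N}_0} I > 0$, extract a weak limit, and then produce a minimizer by the standard fibering construction applied to its positive and negative parts.

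The first step is to observe that $\mathcal{N}_0 \subset \mathcal{N}$: for $u \in \mathcal{N}_0$ the disjoint supports of $u^+$ and $u^-$ together with $\nabla u = 0$ a.e.\ on $\{u=0\}$ yield the splitting $\langle I'(u), u\rangle = \langle I'(u^+), u^+\rangle + \langle I'(-u^-), -u^-\rangle = 0$, and $u = u^+ - u^- \neq 0$ since $\pm u^\pm \in \mathcal{N}$. Hence Proposition \ref{proposition_I_coericive} shows that $\{u_n\}$ is bounded in $\WH$; passing to a subsequence gives $u_n \weak w_0$ in $\WH$, $u_n \to w_0$ a.e.\ in $\Omega$, and, by the compact embeddings in Proposition \ref{proposition_embeddings}, strongly in $\Lp{\ell(\cdot)}$, $L^{\kappa(\cdot)}(\partial\Omega)$ and $\Lp{\mathcal{H}}$. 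A subsequence-of-subsequence argument using a.e.\ convergence then gives $u_n^{\pm}\weak w_0^{\pm}$ in $\WH$ with the analogous strong convergences.

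The main obstacle is to show that $w_0^{\pm}\neq 0$. I would argue by contradiction: if, say, $u_n^+ \weak 0$, then by the strong convergence in $L^{\ell(\cdot)}(\Omega)$ and $L^{\kappa(\cdot)}(\partial\Omega)$ combined with the subcritical growth \eqref{h_1}, one gets
\begin{align*}
\into f(x,u_n^+) u_n^+ \, \diff x + \intor g(x,u_n^+) u_n^+ \, \diff\sigma \longrightarrow 0.
\end{align*}
Because $u_n^+ \in \mathcal{N}$, this quantity equals $\rho(u_n^+)$, so Proposition \ref{properties_modular_norm_complete}(v) forces $\|u_n^+\|\to 0$ and therefore $I(u_n^+) \to 0$; but this contradicts $I(u_n^+) \geq \inf_{\mathcal{N}} I > 0$ established in the previous proposition. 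Symmetrically, $-w_0^- \neq 0$.

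To conclude, by Proposition \ref{properties_Nehari_manifold} applied to the nonzero elements $w_0^+$ and $-w_0^-$, there exist unique $s^{\pm}>0$ with $s^+ w_0^+, -s^- w_0^- \in \mathcal{N}$; setting $\tilde w := s^+ w_0^+ - s^- w_0^-$ gives $\tilde w \in \mathcal{N}_0$, so $I(\tilde w) \geq c$. For the reverse inequality, since $u_n^{\pm} \in \mathcal{N}$ the maximality statement of Proposition \ref{properties_Nehari_manifold} yields $I(s^+ u_n^+) \leq I(u_n^+)$ and $I(-s^- u_n^-) \leq I(-u_n^-)$, while disjoint supports give the splittings $I(s^+ u_n^+ - s^- u_n^-) = I(s^+ u_n^+) + I(-s^- u_n^-)$ and $I(u_n) = I(u_n^+) + I(-u_n^-)$. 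Combining these with weak lower semicontinuity of $I$ (convexity and w.l.s.c.\ of the modular part, together with strong continuity of the $F$ and $G$ integrals via the compact embeddings) yields
\begin{align*}
c \leq I(\tilde w) \leq \liminf_{n\to\infty} I(s^+ u_n^+ - s^- u_n^-) \leq \liminf_{n\to\infty} I(u_n) = c.
\end{align*}
Hence $I(\tilde w) = c$ and $w_0 := \tilde w$ is the desired minimizer in $\mathcal{N}_0$.
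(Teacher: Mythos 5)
Your proof is correct and takes essentially the same approach as the paper: bound the minimizing sequence via coercivity of $I\vert_{\mathcal{N}}$, extract weak limits of $u_n^\pm$, show they are nonzero via the strong continuity of the $f$- and $g$-terms and $u_n^\pm \in \mathcal{N}$, rescale by Proposition \ref{properties_Nehari_manifold} to land in $\mathcal{N}_0$, and close with the fiber-map maximality together with weak lower semicontinuity of $I$. The only cosmetic difference is that you identify the weak limits of $u_n^\pm$ with $w_0^\pm$ (which is true but not needed), whereas the paper simply calls them $v_1,v_2$ and builds the minimizer directly as $s_1 v_1 - s_2 v_2$.
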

\begin{proof}
	Let $\{u_n\}_{n \in \N} \subseteq \mathcal{N}_0$ be a minimizing sequence, that is, $I(u_n) \searrow \inf_{u \in \mathcal{N}_0} I(u)$. As $u_n \in \mathcal{N}_0$, then $\pm u_n^\pm \in \mathcal{N}$ and $I(\pm u_n^\pm )>0$ for all $n \in \N$ (see Proposition \ref{properties_Nehari_manifold}). Moreover, since $I(u_n) = I(u_n^+) + I(-u_n^-)$ for every $n \in \N$ and from Proposition \ref{proposition_I_coericive}, one has that $\{\pm u_n^\pm\}_{n \in \N}$ are both bounded. Then, by the compact embeddings $\WH\hookrightarrow L^{\ell(\cdot)}(\Omega)$ as well as $\WH\hookrightarrow L^{\kappa(\cdot)}(\partial\Omega)$ (see Proposition \ref{proposition_embeddings}(iii), (v)), there exist subsequences $\{\pm u_{n_k}^\pm\}_{k \in \N}$ and $v_1, v_2 \in \WH$ such that
	\begin{align*}
		 & u_{n_k}^+ \weak v_1,  u_{n_k}^- \weak v_2 \quad \text{in }\WH,\\
		 &u_{n_k}^+ \to v_1, u_{n_k}^- \to v_2\quad\text{in } \Lp{\ell(\cdot)}  \text{ and pointwisely a.e.\,in }\Omega,\\
		 &u_{n_k}^+ \to v_1, u_{n_k}^- \to v_2 \quad\text{in } L^{\kappa(\cdot)}(\partial\Omega)  \text{ and pointwisely a.e.\,in }\partial \Omega,\\
		&\text{with }v_1 \geq 0, v_2\geq 0 \text{ and }v_1v_2=0 \text{ a.e.\,in } \Omega.
	\end{align*}

	\textbf{Claim:} $v_1, v_2 \ne 0$.\\
	Arguing by contradiction, suppose that $v_1=0$. Recalling that $u_{n_k}^+ \in \mathcal{N}$ implies that
	\begin{align*}
		\lan I'(u_{n_k}^+), u_{n_k}^+ \ran =0,
	\end{align*}
	one has
	\begin{align*}
		\rho(u_{n_k}^+) - \into f(x,u_{n_k}^+)(u_{n_k}^+) \,\diff x - \intor g(x, u_{n_k}^+)(u_{n_k}^+) \,\diff\sigma =0.
	\end{align*}
	From the Carathéodory assumption and \eqref{h_1} on the nonlinearities $f$ and $g$, it follows that the two integral terms are strongly continuous (see \cite[Lemma 4.4] {Crespo-Blanco-Winkert-2022}), thus $\rho(u_{n_k}^+) \to 0$ as $k \to \infty$. By Proposition \ref{properties_modular_norm_complete}(v), we get $u_{n_k}^+ \to 0$ in $\WH$ and
	\begin{align*}
		0 < \inf_{u \in \mathcal{N}} I(u) \le I(u_{n_k}^+) \to I(0) = 0 \quad \text{as $k \to \infty$},
	\end{align*}
	that is a contradiction. Analogously we prove that $v_2 \neq 0$ and our claim is true. Now, using Proposition \ref{properties_Nehari_manifold}, there exist $s_1, s_2 >0$ such that $s_1 v_1, s_2 v_2  \in \mathcal{N}$. We put
	\begin{align*}
		w_0 = s_1 v_1 - s_2 v_2 = w_0^+ - w_0^-,
	\end{align*}
	hence $w_0 \in \mathcal{N}_0$. Finally, it remains to prove that $I(w_0) = \inf_{u \in \mathcal{N}_0} I(u)$. It is worth noticing that all the positive terms of $I$ are convex and continuous, thus sequentially weakly lower semicontinuous. On the other hand, we know that the $F$ and $G$ terms are strongly continuous. Hence, $I$ is sequentially weakly lower semicontinuous and this leads to
	\begin{align*}
		\inf_{u \in \mathcal{N}_0} I(u)
		& = \lim_{k \to \infty} I(u_{n_k}) = \lim_{k \to \infty} \l(I(u_{n_k}^+) + I(-u_{n_k}^-)\r) \\
		& \ge \liminf_{k\to \infty} \l(I(s_1 u_{n_k}^+) + I(- s_2 u_{n_k}^-) \r) \\
		& \ge I(s_1v_1) + I(- s_2 v_2)\\
		& = I(w_0^+) + I(-w_0^-) \\
		& = I(w_0) \ge \inf_{u \in \mathcal{N}_0} I(u).
	\end{align*}
	The proof is complete.
\end{proof}

Now, we prove that the minimizer obtained in Proposition \ref{proposition_existence_minimum} is a critical point of the functional $I$.
\begin{proposition}
	\label{proposition_solution_Nehari}
	Let hypotheses \eqref{H2}, \eqref{h_1}--\eqref{h_3} and \eqref{h_5} be satisfied and let $w_0 \in \mathcal{N}_0$ such that $I(w_0) = \displaystyle\inf_{u \in \mathcal{N}_0} I(u)$. Then, $w_0$ is a critical point of the functional $I$.
\end{proposition}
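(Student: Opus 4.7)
The plan is to argue by contradiction using the Quantitative Deformation Lemma (Lemma \ref{quantitative_deformation_lemma}) together with a Brouwer degree argument, following the strategy classical for sign-changing Nehari minimizers (Szulkin--Weth). Suppose, towards a contradiction, that $I'(w_0) \neq 0$. Since $I \in C^1(\WH)$, there exist $\lambda, \alpha > 0$ such that
\begin{align*}
    \|I'(u)\|_* \geq \alpha \quad \text{for all } u \in \WH \text{ with } \|u - w_0\| \leq 3\lambda.
\end{align*}

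First, I would study the two-parameter family
\begin{align*}
    \gamma(s,t) := s\, w_0^+ - t\, w_0^- \qquad \text{for } (s,t) \in D := [1-\delta,1+\delta]^2,
\end{align*}
with $\delta \in (0,1/2)$ chosen small enough so that $\gamma(D) \subset B_\lambda(w_0)$ (possible by continuity, since $\gamma(1,1)=w_0$). Using Proposition \ref{properties_Nehari_manifold} applied separately to the nontrivial parts $w_0^+$ and $-w_0^-$ (which lie in $\mathcal{N}$, because $w_0 \in \mathcal{N}_0$), the scalar functions $s \mapsto I(s w_0^+)$ and $t \mapsto I(-t w_0^-)$ each attain a strict maximum at $1$, and they are strictly decreasing away from it. Since $I(\gamma(s,t)) = I(s w_0^+) + I(-t w_0^-)$ (the supports of $w_0^+$ and $w_0^-$ are disjoint, both in $\Omega$ and on $\partial\Omega$), this yields the crucial estimate
\begin{align*}
    m := \max_{(s,t) \in \partial D} I(\gamma(s,t)) < I(w_0) = \inf_{u \in \mathcal{N}_0} I(u).
\end{align*}
Pick $\varepsilon>0$ with $4\varepsilon < \min\{I(w_0)-m, \alpha\lambda/8\}$ and set $S := B_\lambda(w_0)$ in the Deformation Lemma. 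For any $u \in \varphi^{-1}([I(w_0)-2\varepsilon, I(w_0)+2\varepsilon])\cap S_{2\lambda}$ one has $\|u - w_0\|\leq 3\lambda$, so $\|I'(u)\|_* \geq \alpha \geq 8\varepsilon/\lambda$; hence Lemma \ref{quantitative_deformation_lemma} provides $\eta \in C([0,1]\times \WH; \WH)$ with the listed properties.

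Next, I would consider the deformed map $\widetilde\gamma(s,t) := \eta(1, \gamma(s,t))$. Property (iv) of $\eta$ keeps $\widetilde\gamma(D)$ inside $B_{2\lambda}(w_0)$, and property (i) combined with the inequality $m < I(w_0)-2\varepsilon$ ensures $\widetilde\gamma = \gamma$ on $\partial D$. Property (ii) together with $\gamma(D) \subset B_\lambda(w_0) \cap \{I \le I(w_0) + \varepsilon\}$ then forces
\begin{align*}
    \max_{(s,t) \in D} I(\widetilde\gamma(s,t)) \leq I(w_0) - \varepsilon.
\end{align*}
The final step is to show that $\widetilde\gamma(D)$ nevertheless intersects $\mathcal{N}_0$, which contradicts the minimality of $w_0$. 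To this end I would define $\Psi \colon D \to \R^2$ by
\begin{align*}
    \Psi(s,t) := \bigl(\langle I'(s w_0^+), w_0^+\rangle,\ \langle I'(-t w_0^-), -w_0^-\rangle\bigr),
\end{align*}
and $\widetilde\Psi(s,t) := (\langle I'(\widetilde\gamma(s,t)^+), \widetilde\gamma(s,t)^+\rangle,\ \langle I'(-\widetilde\gamma(s,t)^-), -\widetilde\gamma(s,t)^-\rangle)$. By the strict monotonicity argument used in the proof of Proposition \ref{properties_Nehari_manifold} (see \eqref{estimate_5}), the Jacobian of $\Psi$ at $(1,1)$ is a diagonal matrix with strictly negative entries, so $\deg_B(\Psi, \mathrm{int}\, D, 0) = 1$. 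Since $\widetilde\Psi = \Psi$ on $\partial D$ (because $\widetilde\gamma = \gamma$ there), homotopy invariance of the Brouwer degree gives $\deg_B(\widetilde\Psi, \mathrm{int}\, D, 0) = 1$, and therefore there exists $(s_0,t_0) \in \mathrm{int}\, D$ with $\widetilde\Psi(s_0,t_0) = 0$. This means $\widetilde\gamma(s_0,t_0) \in \mathcal{N}_0$ (provided $\widetilde\gamma(s_0,t_0)^\pm \neq 0$), yielding
\begin{align*}
    I(w_0) = \inf_{\mathcal{N}_0} I \leq I(\widetilde\gamma(s_0,t_0)) \leq I(w_0) - \varepsilon,
\end{align*}
a contradiction.

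The main obstacle I anticipate is the Brouwer degree step: one must ensure that the deformed map $\widetilde\gamma(s,t)$ still splits meaningfully into positive and negative parts (so that $\widetilde\Psi$ makes sense and $\widetilde\gamma(s_0,t_0)^\pm \neq 0$), and that the computation of the Jacobian of $\Psi$ at $(1,1)$ really uses the strict monotonicity from \eqref{h_5} rather than merely the $q_+$-homogeneity that would be available in a purely $p$-Laplacian setting. Verifying the signs of the diagonal entries requires differentiating an identity of the form \eqref{estimate_5} in $s$ and using that $p(x) < q(x) \leq q_+$, which should work but needs to be spelled out carefully. The remaining ingredients --- continuity of $\gamma$, disjoint supports, and the inequality $\max_{\partial D} I\circ\gamma < I(w_0)$ --- are routine consequences of Proposition \ref{properties_Nehari_manifold} and of $w_0^\pm$ being nontrivial.
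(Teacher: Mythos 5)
Your proposal follows the paper's strategy almost line for line (contradiction, Quantitative Deformation Lemma on a small neighbourhood of $w_0$, deformation of the two-parameter surface $(s,t)\mapsto s w_0^+ - t w_0^-$, Brouwer degree argument to locate a point of $\mathcal{N}_0$ with energy strictly below the infimum). Two points, however, are left open where the paper does genuine work.

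First, the condition $\widetilde\gamma(s_0,t_0)^\pm \neq 0$, which you flag as "provided" and as "the main obstacle I anticipate", is not a byproduct of the construction: it must be engineered. The paper does this \emph{before} the deformation step by showing (via the embedding $\WH \hookrightarrow L^{p_-}(\Omega)$) that there is a radius $\delta_0 > 0$ depending only on $\|w_0^+\|_{p_-}$ and $\|w_0^-\|_{p_-}$ such that $\|v - w_0\| < \delta_0$ forces $v^\pm \neq 0$. It then takes $\delta = \min\{\delta_0/2, \delta_1\}$ so that after the deformation the image $h(D) \subseteq S_\delta = B(w_0, 2\delta) \subseteq B(w_0,\delta_0)$ lies in the set where both signed parts survive. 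Without this quantitative control of how far $\eta$ may move points, the degree argument only gives you a zero of $\widetilde\Psi$; it does not put $\widetilde\gamma(s_0,t_0)$ into $\mathcal{N}_0$, and the contradiction collapses. In your version, you need to fix the radius $\lambda$ small enough (depending on $\|w_0^\pm\|_{p_-}$) \emph{before} invoking the deformation, and then verify $h(D) \subseteq B(w_0, 2\lambda)$ stays inside the non-degeneracy region.

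Second, your computation of $\deg(\Psi, \mathrm{int}\,D, 0)$ via "the Jacobian of $\Psi$ at $(1,1)$" presupposes that $s \mapsto \langle I'(s w_0^+), w_0^+\rangle$ is differentiable, i.e.\ that $I$ is twice differentiable along rays. Under the standing hypotheses $I \in C^1(\WH)$ only, and the proof of Proposition \ref{properties_Nehari_manifold} establishes strict monotonicity of $\phi'_u(s)$, not its differentiability. The paper sidesteps this by noting that each component of $H_0$ is a continuous one-variable function that is strictly positive on $(1-\lambda,1)$ and strictly negative on $(1,1+\lambda)$ (the sign change from Proposition \ref{properties_Nehari_manifold}), which alone pins down the one-dimensional degree of each factor to $-1$; the Cartesian product property then gives $\deg(H_0,D,0)=(-1)(-1)=1$. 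This is the robust version of what you intend, and it should replace the Jacobian step.
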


\begin{proof}
	First, we observe something that will be useful in the sequel. Recalling that $\pm w_0^\pm \ne 0$ and indicating with $C_{p_-}$ the constant of the embedding $\WH \hookrightarrow \Lp{p_-}$, we have that
	\begin{align*}
		\norm{ w_0 - v } \geq C_{p_-}^{-1} \norm{ w_0 - v}_{p_-} \geq
		\begin{cases}
			C_{p_-}^{-1} \norm{ w_0^- }_{p_-} & \text{if } v^- = 0,\\
			C_{p_-}^{-1} \norm{ w_0^+ }_{p_-} & \text{if } v^+ = 0,
		\end{cases}
	\end{align*}
	for all $v \in \WH$. Thus, taking
	\begin{align*}
		0 < \delta_0 < \min \left \{ C_{p_-}^{-1} \norm{ w_0^+}_{p_-} , C_{p_-}^{-1} \norm{ w_0^- }_{p_-} \right\},
	\end{align*}
	we have the following implication
	\begin{equation}
		\label{estimate_contradiction}
		\text{if } \norm{ w_0 - v } < \delta_0, \quad \text{then } \quad v^+ \neq 0 \neq v^-.
	\end{equation}
	Now, arguing by contradiction, suppose that $I'(w_0)\ne0$. Then there exist $\gamma, \delta_1 > 0$ such that
	\begin{equation}
		\label{estimate_norm_*}
		\norm{I'(u)}_* \geq \gamma \quad \text{for all } u \in \WH \text{ with } \norm{ u - w_0 } < 3 \delta_1.
	\end{equation}
	Put
	\begin{equation}
		\label{def_delta}
		\delta = \min \l\{ \frac{\delta_0}{2}, \delta_1 \r\}.
	\end{equation}
	From the continuity of the map defined by $(s,t) \mapsto s w_0^+ - t w_0^-$ for every $(s,t) \in [0,\infty)^2$, we have that for every $\delta>0$ there exists $\lambda \in (0,1)$ such that
	\begin{align}\label{estimate_continuity_lambda}
		\norm{ s w_0^+ - t w_0^- - w_0} < \delta,
	\end{align}
	for all $(s,t) \in [0,\infty)^2$ with $\max\{|s-1| , |t-1| \} < \lambda$. Let
	\begin{align*}
		D = (1 - \lambda, 1 + \lambda)^2, \quad
		m_0=\max_{(s,t) \in \partial D} I(s w_0^+ - t w_0^-),
	\end{align*}
	and
	\begin{align}
		\label{def_c}
		c = \inf_{u \in \mathcal{N}_0} I(u).
	\end{align}
	We emphasize that for any $(s,t) \in [0,\infty)^2 \setminus \{(1,1)\}$, using Proposition \ref{properties_Nehari_manifold}, one has
	\begin{equation}
		\begin{aligned}
			\label{estimate_less_inf}
			I(s w_0^+ - t w_0^-)
			& = I(s w_0^+) + I(- t w_0^-)\\
			& < I(w_0^+) + I(- w_0^-) = I(w_0) = \inf_{u \in \mathcal{N}_0} I(u),
		\end{aligned}
	\end{equation}
	which implies that $m_0 < c$.

	In order to use the same notation of the Quantitative Deformation Lemma given in Lemma  \ref{quantitative_deformation_lemma}, we set
	\begin{align*}
		S = B(w_0, \delta), \quad \eps = \min \left\{ \frac{ c - m_0}{4} , \frac{\gamma \, \delta}{8} \right\},
	\end{align*}
	and $\delta, c$ as in \eqref{def_delta} and \eqref{def_c}, respectively. We also notice that by the definition of $S$ it follows that $S_\delta = B(w_0, 2 \delta)$ and $S_{2\delta}=B(w_0,3\delta)$. From \eqref{estimate_norm_*}, we get
	\begin{align*}
		\norm{I'(u)}_* \geq \gamma \geq \frac{8 \eps}{\delta} \quad \text{for all } u \in S_{2\delta},
	\end{align*}
	so all the assumptions of Lemma \ref{quantitative_deformation_lemma} are verified. Hence, there exists a mapping $\eta \in C \l([0,1] \times \WH , \WH\r)$ such that
	\begin{enumerate}
		\item[\textnormal{(i)}]
			$\eta (t, u) = u$, if $t = 0$ or if $u \notin I^{-1}\l([c - 2\eps, c + 2\eps]\r) \cap S_{2 \delta}$,
		\item[\textnormal{(ii)}]
			$I( \eta( 1, u ) ) \leq c - \eps$ for all $u \in I^{-1} \l( ( - \infty, c + \eps] \r) \cap S $,
		\item[\textnormal{(iii)}]
			$\eta(t, \cdot )$ is an homeomorphism of $\WH$ for all $t \in [0,1]$,
		\item[\textnormal{(iv)}]
			$\norm{\eta(t, u) - u} \leq \delta$ for all $u \in \WH$ and $t \in [0,1]$,
		\item[\textnormal{(v)}]
			$I( \eta( \cdot , u))$ is decreasing for all $u \in \WH$,
		\item[\textnormal{(vi)}]
			$I(\eta(t, u)) < c$ for all $u \in I^{-1} \l( ( - \infty, c] \r) \cap S_\delta$ and $t \in (0, 1]$.
	\end{enumerate}
	Afterwards, we consider $h \colon [0,\infty)^2 \to \WH$ defined by
	\begin{align*}
		h(s,t) = \eta ( 1 , s w_0^+ - t w_0^-) \quad \text{ for all } (s,t) \in [0,\infty)^2,
	\end{align*}
	which has the following properties:
	\begin{enumerate}
		\item[\textnormal{(vii)}]
			$h \in C \left( [0,\infty)^2 , \WH \right)$,
		\item[\textnormal{(viii)}]
			$I( h(s,t) ) \leq c - \eps$ for all $(s,t) \in D$, by (ii), \eqref{estimate_continuity_lambda} and \eqref{estimate_less_inf},
		\item[\textnormal{(ix)}]
			$h(D) \subseteq S_\delta$, by (iv) and \eqref{estimate_continuity_lambda},
		\item[\textnormal{(x)}]
			$h(s,t) = s w_0^+ - t w_0^-$ for all $(s,t) \in \partial D$,
	\end{enumerate}
	where the last one follows from (i) and
	\begin{align*}
		I(s w_0^+ - t w_0^-)
		\leq m_0 + c - c
		< c - \left( \frac{c - m_0}{2} \right)
		\leq c - 2 \eps \quad \text{for all } (s,t) \in \partial D.
	\end{align*}
	Now, we define two mappings $H_0,H_1 \colon (0,\infty)^2 \to \R^2$ given by
	\begin{align*}
		& H_0 (s,t) = \left( \; \lan I'(s w_0^+) , w_0^+ \ran \; , \; \lan I'(- t w_0^-) , - w_0^- \ran \; \right), \\
		& H_1 (s,t) = \left( \; \frac{1}{s} \lan I'(h^+(s,t)) , h^+(s,t) \ran \; , \; \frac{1}{t} \lan (- h^- (s,t)) , -h^- (s,t) \ran \; \right),
	\end{align*}
	which are clearly continuous. From Proposition \ref{properties_Nehari_manifold} it follows that
	\begin{equation}
		\begin{aligned}
			\label{estimate_sign_of_I'}
			\lan I'(s w_0^+) , w_0^+ \ran
			\begin{cases}
				> 0 & \text{for all } 0 < s < 1, \\
				< 0 & \text{for all } s > 1,
			\end{cases}
			\\
			\lan I'(- t w_0^-) , - w_0^- \ran
			\begin{cases}
				> 0 & \text{for all } 0 < t < 1, \\
				< 0 & \text{for all } t > 1.
			\end{cases}
		\end{aligned}
	\end{equation}
	Given $A \subseteq \R^n$ open and bounded and $g \in C(A,\R^N)$, we denote by $\deg (g,A,y)$ the Brouwer degree over $A$ of $g$ at the value $y \in \R^N \setminus g(\partial A)$. From the Cartesian product property of the Brouwer degree (see the book of Dinca-Mawhin\cite[Lemma 7.1.1 and Theorem 7.1.1]{Dinca-Mawhin-2021}) we get
	\begin{align*}
		\deg (H_0, D, 0) & = \deg \left( \; \lan I'(s w_0^+) , w_0^+ \ran \; , \; (1 - \lambda, 1 + \lambda) \; , \; 0 \right) \\
		& \quad \times \deg \left( \; \lan I'(- t w_0^-) , - w_0^- \ran \; , \; (1 - \lambda, 1 + \lambda) \; , \; 0 \right),
	\end{align*}
	and by \eqref{estimate_sign_of_I'} and Proposition 1.2.3 of Dinca-Mawhin\cite{Dinca-Mawhin-2021}, we obtain
	\begin{align*}
		\deg (H_0, D, 0) = (-1) (-1) = 1.
	\end{align*}
	We observe that (x) implies $H_0\vert_{\partial D} = H_1\vert_{\partial D}$, so as the Brouwer degree depends on the boundary (\cite[Corollary 1.2.7]{Dinca-Mawhin-2021}), we have
	\begin{align*}
		\deg (H_1, D, 0) = \deg (H_0, D, 0) = 1,
	\end{align*}
	and by the solution property (\cite[Corollary 1.2.5]{Dinca-Mawhin-2021}) it follows that there exists $(s_0 , t_0) \in D$ such that $H_1 (s_0 , t_0) = (0,0)$, namely
	\begin{align*}
		\lan I'(h^+(s_0 , t_0)) , h^+(s_0 , t_0) \ran
		= 0
		= \lan I'(- h^- (s_0 , t_0)) , -h^- (s_0 , t_0) \ran.
	\end{align*}
	Finally, by (ix)
	\begin{align*}
		\norm{h(s_0 , t_0) - w_0} \leq 2 \delta \leq \delta_0,
	\end{align*}
	which, taking \eqref{estimate_contradiction} into account, leads to
	\begin{align*}
		h^+(s_0 , t_0) \neq 0 \quad \text{and} \quad  - h^-(s_0 , t_0) \neq 0.
	\end{align*}
	Thus, $h(s_0 , t_0) \in \mathcal{N}_0$, that is a contradiction with
	\begin{align*}
		I(h(s_0 , t_0)) \leq c - \eps = \inf_{u \in \mathcal{N}_0} I(u) - \eps,
	\end{align*}
	obtained by (viii). This completes the proof.
\end{proof}

Combining Theorem \ref{theorem_2_sol} with Propositions \ref{proposition_existence_minimum} and \ref{proposition_solution_Nehari}, we get the existence of three weak solutions for problem \eqref{problem}. We further know that they are bounded functions thanks to Theorem \ref{bounded-solutions}.

\begin{theorem}
	\label{theorem_3_sol}
	Let hypotheses \eqref{H2} and \eqref{H_{f,g}} be satisfied. Then, there exist three nontrivial weak solutions $u_0,v_0,w_0 \in \WH \cap \Lp{\infty}$ of problem \eqref{problem}  such that $u_0 \geq 0$, $v_0 \leq 0$ and $w_0$ is sign-changing.
\end{theorem}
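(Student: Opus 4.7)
The statement is really an assembly of results proved in Sections \ref{Section:bounded solutions}--\ref{Section:sign changing}, so my plan is to invoke the three relevant statements in the correct order and then check one small point about the $\Linf$-regularity of the third solution. There is no genuinely new analytical step; the heavy lifting (Cerami condition, mountain-pass geometry, superlinear estimates, Nehari-manifold minimization, Quantitative Deformation Lemma together with a Brouwer-degree argument, and a priori $\Linf$-bounds) has already been carried out in the preparatory results.

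First I would apply Theorem~\ref{theorem_2_sol}: its hypotheses \eqref{H} and \eqref{h_1}--\eqref{h_4} are contained in \eqref{H}+\eqref{H_{f,g}}, so it immediately delivers two nontrivial weak solutions $u_0, v_0 \in \WH \cap \Linf$ of \eqref{problem} with $u_0 \geq 0$ and $v_0 \leq 0$ a.e.\,in $\Omega$. Next, under \eqref{H} and \eqref{h_1}--\eqref{h_3}, \eqref{h_5} (again subsumed by \eqref{H_{f,g}}), Proposition~\ref{proposition_existence_minimum} produces a minimizer $w_0 \in \mathcal{N}_0$ of $I\big|_{\mathcal{N}_0}$, and Proposition~\ref{proposition_solution_Nehari} promotes this minimizer to a critical point of $I$ on the whole space $\WH$, hence a weak solution of \eqref{problem}. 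By the very definition of $\mathcal{N}_0 = \{u \in \WH : \pm u^\pm \in \mathcal{N}\}$, both $w_0^+$ and $-w_0^-$ belong to $\mathcal{N}$, so in particular $w_0^+ \not\equiv 0$ and $w_0^- \not\equiv 0$; this is exactly the sign-changing property of $w_0$.

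The only verification not already spelled out is that $w_0 \in \Linf$, and this is where I expect the modest technical effort of the proof to sit. I would cast \eqref{problem} as an instance of the general Neumann problem \eqref{problem-Linfty} by setting
\begin{align*}
\mathcal{A}(x,t,\xi) &= |\xi|^{p(x)-2}\xi + \mu(x)|\xi|^{q(x)-2}\xi,\\
\mathcal{B}(x,t,\xi) &= f(x,t) - |t|^{p(x)-2}t,\\
\mathcal{C}(x,t) &= g(x,t) - |t|^{p(x)-2}t,
\end{align*}
and then check hypothesis \eqref{H_infty} by choosing $r, l \in C_+(\close)$ majorising $\ell$ and $\kappa$ and satisfying $q(x) < r(x) < p^*(x)$ and $p(x) < l(x) < p_*(x)$ for all $x \in \close$. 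Such a choice exists because \eqref{h_1} gives $\ell_+ < (p_-)^* \leq p^*(x)$ and $\kappa_+ < (p_-)_* \leq p_*(x)$. The growth and coercivity bounds on $\mathcal{A}$ are immediate from its form, while the bounds on $\mathcal{B}$ and $\mathcal{C}$ follow from \eqref{h_1} after absorbing the lower-order terms $|t|^{p(x)-1}$ into $|t|^{r(x)-1}+1$ and $|t|^{l(x)-1}+1$, respectively. Theorem~\ref{bounded-solutions} then applies and yields $w_0 \in \Linf$, concluding the proof. As noted above, no genuine obstacle arises; the analytic substance of the theorem is concentrated entirely in the preparatory results invoked.
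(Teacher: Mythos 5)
Your proposal is correct and matches the paper's own approach exactly: the paper's proof of Theorem~\ref{theorem_3_sol} is a one-sentence assembly of Theorem~\ref{theorem_2_sol}, Propositions~\ref{proposition_existence_minimum} and~\ref{proposition_solution_Nehari}, and Theorem~\ref{bounded-solutions}. You additionally spell out the verification of \eqref{H_infty} for $\mathcal{A}$, $\mathcal{B}$, $\mathcal{C}$, which the paper leaves implicit (it is already used tacitly for $u_0, v_0$ in the proof of Theorem~\ref{theorem_2_sol}), and that verification is sound.
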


In the last part of this section, we derive information about the number of nodal domains of the sign-changing solution, that is the number of maximal regions where it has constant sign. The usual definition of nodal domains of a function deals with a continuous function. Nevertheless, we do not know whether our solutions are continuous. Therefore, we use the definition proposed by Crespo-Blanco-Winkert \cite[Section 6]{Crespo-Blanco-Winkert-2022} that we recall in the following.

\begin{definition}
	\label{definition_nodal_domains}
	Let $u \in \WH$ and $A$ be a Borelian subset of $\Omega$ with $|A| > 0$. We say that $A$ is a nodal domain of $u$ if
	\begin{enumerate}[label=(\roman*),font=\normalfont]
		\item
			$u \geq 0$ a.e.\,on $A$ or $u \leq 0$ a.e.\,on $A$;
		\item
			$0 \neq u \mathbbm{1}_A \in \WH$;
		\item
			$A$ is minimal w.r.t.\,\textnormal{(i)} and \textnormal{(ii)}, i.e., if $B \subseteq A$ with $B$ being a Borelian subset of $\Omega$, $|B| > 0$ and $B$ satisfies \textnormal{(i)} and \textnormal{(ii)}, then $|A \setminus B| = 0$.
	\end{enumerate}
\end{definition}

For our purposes, we need to require one more assumption on the nonlinearities:
\begin{enumerate}[label=\textnormal{(h$_{\arabic*}$)},ref=\textnormal{h$_{\arabic*}$}]
	\setcounter{enumi}{5}
	\item
		\label{h_6}
		$f(x,t)t-q_+F(x,t) \geq 0$ and $g(x,t)t-q_+G(x,t)\geq 0$ for all $t\in \mathbb{R}$ and for a.a.\,$x\in\Omega$ and for a.a.\,$x\in\partial\Omega$, respectively.
\end{enumerate}

\begin{proposition}
	\label{proposition_nodal_domains}
	Let hypotheses \eqref{H2}, \eqref{H_{f,g}} and \eqref{h_6} be satisfied. Then, any minimizer of $I\vert_{\mathcal{N}_0}$,  which is also a sign-changing weak solution of problem \eqref{problem}, has exactly two nodal domains.
\end{proposition}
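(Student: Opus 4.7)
The plan is to argue by contradiction: assume $w_0$ admits at least three nodal domains $A_1, A_2, A_3, \ldots$ in the sense of Definition \ref{definition_nodal_domains}. Since $w_0$ is sign-changing, both classes $\{i : w_0\geq 0 \text{ on } A_i\}$ and $\{i : w_0\leq 0 \text{ on } A_i\}$ are nonempty; with three or more nodal domains the pigeonhole principle gives at least two of the same sign. Without loss of generality, $A_1, A_2$ are positive (i.e.\,$w_0\geq 0$) and $A_3$ is negative. Set $u_i := w_0 \mathbbm{1}_{A_i}$; by property (ii) of Definition \ref{definition_nodal_domains} these lie in $\WH\setminus\{0\}$ and have pairwise disjoint supports, with $\nabla u_i = \nabla w_0 \cdot \mathbbm{1}_{A_i}$ a.e.\,in $\Omega$.

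The key first step is to test $\lan I'(w_0), v\ran=0$, which holds by Proposition \ref{proposition_solution_Nehari}, against each $v=u_i$. Since $w_0$ and $\nabla w_0$ agree with $u_i$ and $\nabla u_i$ respectively on $A_i$ and vanish on the supports of the remaining $u_j$, and since $f(x, 0) = g(x, 0) = 0$ by \eqref{function in zero}, a direct substitution gives $\lan I'(u_i), u_i\ran = \lan I'(w_0), u_i\ran = 0$. Thus $u_i \in \mathcal{N}$ for every $i$. The same partitioning of every integrand across the disjoint $A_i$ yields the additive energy decomposition $I(w_0) = \sum_i I(u_i)$, and the bound $\inf_{\mathcal{N}} I > 0$ established just before Proposition \ref{proposition_existence_minimum} gives $I(u_i)>0$ for each $i$.

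Now I construct a competitor in $\mathcal{N}_0$ with strictly smaller energy than $w_0$. Set $v := u_1 + u_3$, so that $v^+ = u_1 \in \mathcal{N}$ and $-v^- = u_3 \in \mathcal{N}$, hence $v \in \mathcal{N}_0$. The same splitting gives $I(v) = I(u_1) + I(u_3)$, whence
\begin{align*}
I(w_0) \,=\, I(v) + I(u_2) + \sum_{i\geq 4} I(u_i) \,>\, I(v),
\end{align*}
because $I(u_2)>0$. Since $w_0$ minimizes $I$ on $\mathcal{N}_0$, however, $I(w_0)\leq I(v)$, a contradiction. Hence $w_0$ has at most two nodal domains, and combined with the lower bound of two coming from sign-changingness, this forces exactly two.

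The main technical obstacle is justifying all the pointwise and integral decompositions across the disjoint Borel sets $A_i$: that $\nabla u_i = \nabla w_0 \cdot \mathbbm{1}_{A_i}$ a.e.\,(used in both the splitting of $I$ and of $\lan I'(w_0), u_i\ran$), that the boundary integrals decompose correctly under the trace map (using that $\gamma(u_i)$ has disjoint supports on $\partial\Omega$), and that the contribution of $\Omega\setminus\bigcup_i A_i$ is trivial (here $w_0$ and hence $\nabla w_0$ vanish, and $F(x,0)=G(x,0)=0$). Hypothesis \eqref{h_6} plays an auxiliary role: coupled with \eqref{function in zero} it yields $f(x,t)t - q_+ F(x,t) \geq 0$ and $g(x,t)t - q_+ G(x,t) \geq 0$ for all $t\in\R$, reinforcing the strict positivity of $I$ on $\mathcal{N}$ that drives the final strict inequality.
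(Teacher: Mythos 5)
Your core energy computation is on target and mirrors the paper's: cut $w_0$ into disjoint pieces, show each cutoff lies in $\mathcal{N}$ (using $I'(w_0)=0$, $\nabla(w_0\mathbbm{1}_A)=\nabla w_0\cdot\mathbbm{1}_A$ a.e., and $f(x,0)=g(x,0)=0$), exploit additivity of $I$ over disjoint supports, and then beat $w_0$ on $\mathcal{N}_0$ with a competitor built from fewer pieces. You also correctly observe that $I(u_i)>0$ follows immediately from $u_i\in\mathcal{N}$ and the earlier bound $\inf_{\mathcal{N}} I>0$; this makes \eqref{h_6} dispensable at that step, a genuine simplification over the paper, which instead carries out an explicit computation with \eqref{h_6} to show $I(y_2)>0$ even though $y_2\in\mathcal{N}$ was already available.

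The framing, however, has a genuine gap. You only prove \emph{at most two} (by contradiction from the existence of three nodal domains) and then appeal to sign-changingness for \emph{at least two}. But Definition \ref{definition_nodal_domains} builds in a minimality requirement, and sign-changingness alone does not tell you that $\Omega_\pm=\{\pm w_0>0\}$ are minimal --- that is, it does not even guarantee that any nodal domain exists. Establishing minimality of $\Omega_\pm$ is precisely what the paper's proof does: it supposes $\Omega_-=A_1\,\dot\cup\,A_2$ with $A_1$ (and hence $A_2$) satisfying \textnormal{(i)}--\textnormal{(ii)}, sets $y_1=w_0\mathbbm{1}_{\Omega_+\cup A_1}\in\mathcal{N}_0$ and $y_2=w_0\mathbbm{1}_{A_2}$ with $\lan I'(y_2),y_2\ran=0$, and derives $\inf_{\mathcal{N}_0}I=I(w_0)=I(y_1)+I(y_2)>I(y_1)\geq\inf_{\mathcal{N}_0}I$; this one contradiction delivers both the lower and the upper bound on the number of nodal domains. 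Two smaller issues in your version: the identity $I(w_0)=\sum_i I(u_i)$ silently assumes the $A_i$ cover $\{w_0\neq 0\}$ up to a null set, which the definition does not guarantee; and the competitor $v=u_1+u_3$ lies in $\mathcal{N}_0$ only if $A_1$ and $A_3$ carry opposite signs, which the pigeonhole step does not ensure if all three assumed nodal domains happened to lie on the same side. Recasting the contradiction at the level of $\Omega_\pm$, as the paper does, avoids all of this at once, since $\Omega_+$, $A_1$, $A_2$ then genuinely partition $\{w_0\neq 0\}$ with one piece of each sign by construction.
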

\begin{proof}
	Let $w_0$ be such that $I(w_0) = \displaystyle\inf_{u \in \mathcal{N}_0} I(u)$, fix any $\widetilde{w_0}$ representative of $w_0$ and set
	\begin{align*}
		\Omega_\pm = \l\{ x \in \Omega \,:\, \pm \widetilde{w_0} (x) > 0 \r\}.
	\end{align*}
	As $w_0 \mathbbm{1}_{\Omega_\pm} = \pm \widetilde{w_0} ^\pm$ a.e.\,in $\Omega$, it follows that $\Omega_+$ and $\Omega_-$ satisfy conditions (i) and (ii) of Definition \ref{definition_nodal_domains}. By contradiction, we prove that they are also minimal. We assume, without loss of generality, that there exist Borelian subsets  $A_1, A_2$ of $\Omega$, with $A_1 \cap A_2 = \emptyset,|A_1|>0$ and $|A_2|>0$, such that $\Omega_- = A_1 \dot{\cup} A_2$ and $A_1$ satisfies (i) and (ii) of Definition \ref{definition_nodal_domains}. Moreover, it holds
	\begin{align*}
		 & w_0 \mathbbm{1}_{A_2} = \widetilde{w_0} \mathbbm{1}_{A_2} < 0 \;\; \text{a.e.\,in } A_2, \\
		 & w_0 \mathbbm{1}_{A_2} = w_0 \mathbbm{1}_{\Omega_-} - w_0 \mathbbm{1}_{A_1} \in \WH,
	\end{align*}
	thus $A_2$ also satisfies (i) and (ii). Summarizing, we have
	\begin{align}
		\label{estimate_sign_change}
		\mathbbm{1}_{\Omega_+} w_0 \geq 0, \quad \mathbbm{1}_{A_1} w_0 \leq 0, \quad \mathbbm{1}_{A_2} w_0 \leq 0 \quad \text{a.e.\,in } \Omega,
	\end{align}
	and
	\begin{align*}
		w_0 = \mathbbm{1}_{\Omega_+} w_0 + \mathbbm{1}_{A_1} w_0 + \mathbbm{1}_{A_2} w_0 \quad \text{a.e.\,in }\Omega.
	\end{align*}
	Setting $y_1 = \mathbbm{1}_{\Omega_+} w_0 + \mathbbm{1}_{A_1} w_0$ and $y_2 = \mathbbm{1}_{A_2} w_0$, from \eqref{estimate_sign_change} we have $y_1^+= \mathbbm{1}_{\Omega_+} w_0$ and $- y_1 ^- = \mathbbm{1}_{A_1} w_0$. Since $I'(w_0) = 0$ and as the supports of $y_1^+ , - y_1^-$ and $y_2$ do not overlap, one has
	\begin{align*}
		0 = \lan I'(w_0) , y_1^+ \ran	= \lan I'(y_1^+) , y_1^+ \ran.
	\end{align*}
	Hence $y_1^+ \in \mathcal{N}$ and analogously, $- y_1^- \in \mathcal{N}$. Therefore, $y_1 \in \mathcal{N}_0$. With the same argument one can show that $\lan I'(y_2) , y_2 \ran = 0$. Then, from these properties, we obtain
	\begin{align*}
		I(y_2)
		&= I(y_2) - \frac{1}{q_+} \lan I'(y_2) , y_2 \ran\\
		&\geq  \left( \frac{1}{p_+} - \frac{1}{q_+} \right) \rho_{p(\cdot)} ( \nabla y_2 ) + \left( \frac{1}{p_+} - \frac{1}{q_+} \right) \rho_{p(\cdot)} (y_2 ) +\left( \frac{1}{p_+} - \frac{1}{q_+} \right) \rho_{p(\cdot),\partial\Omega} (y_2 ) \\
		&\quad + \into \left( \frac{1}{q_+} f(x,y_2) y_2 - F(x,y_2) \right) \,\diff x + \intor \left( \frac{1}{q_+} g(x,y_2) y_2 - G(x,y_2) \right) \,\diff \sigma,
	\end{align*}
	which leads to
	\begin{align*}
		I(y_2) > 0,
	\end{align*}
	because of $p_+ < q_+$, $y_2 \neq 0$ and \eqref{h_6}. Finally, we get
	\begin{align*}
		\inf_{u \in \mathcal{N}_0} I(u)	= I (w_0) = I(y_1) + I(y_2) > I(y_1) \ge \inf_{u \in \mathcal{N}_0} I(u),
	\end{align*}
	which is a contradiction and this completes the proof.
\end{proof}
Combining Theorem \ref{theorem_3_sol} and Proposition \ref{proposition_nodal_domains}, we get the main existence result of this paper.

\begin{theorem}
	\label{theorem_nodal_domains}
	Let hypotheses \eqref{H2}, \eqref{H_{f,g}} and \eqref{h_6} be satisfied. Then, there exist three nontrivial weak solutions $u_0,v_0,w_0 \in \WH \cap \Lp{\infty}$ of problem \eqref{problem} such that
	\begin{align*}
		u_0 \geq 0, \quad v_0 \leq 0, \quad w_0 \text{ being sign-changing with two nodal domains.}
	\end{align*}
\end{theorem}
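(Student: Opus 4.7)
The plan is to piece together the main result by invoking the two ingredients already established in the preceding subsections, so almost no new work is required at this stage. First I would note that the hypotheses \eqref{H} and \eqref{H_{f,g}} are exactly the ones under which Theorem \ref{theorem_3_sol} applies, so we immediately obtain the existence of three nontrivial weak solutions $u_0, v_0, w_0 \in \WH \cap \Lp{\infty}$ of problem \eqref{problem} with $u_0 \geq 0$, $v_0 \leq 0$, and $w_0$ sign-changing. The boundedness of these three functions comes from Theorem \ref{bounded-solutions} applied with $\mathcal{A}(x,t,\xi) = |\xi|^{p(x)-2}\xi + \mu(x)|\xi|^{q(x)-2}\xi$, $\mathcal{B}(x,t,\xi) = f(x,t) - |t|^{p(x)-2}t$ and $\mathcal{C}(x,t) = g(x,t) - |t|^{p(x)-2}t$, whose growth conditions are verified thanks to \eqref{h_1}.

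Next I would recall, tracing the construction back to Propositions \ref{proposition_existence_minimum} and \ref{proposition_solution_Nehari}, that the sign-changing solution $w_0$ produced in Theorem \ref{theorem_3_sol} is precisely a minimizer of the energy functional $I$ restricted to the nodal Nehari set $\mathcal{N}_0$. This is the crucial structural property of $w_0$ that is required in order to apply the nodal-domain proposition: without the minimizing characterization, one would have no control over the topology of the superlevel/sublevel sets of $w_0$.

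Having identified $w_0$ as a sign-changing minimizer of $I|_{\mathcal{N}_0}$, I would then invoke Proposition \ref{proposition_nodal_domains}, which needs in addition the monotonicity assumption \eqref{h_6} on $t \mapsto f(x,t)t - q_+ F(x,t)$ and $t \mapsto g(x,t)t - q_+ G(x,t)$ that is explicitly part of the hypotheses of the theorem. Proposition \ref{proposition_nodal_domains} then asserts that $w_0$ admits exactly two nodal domains in the sense of Definition \ref{definition_nodal_domains}, which completes the statement.

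There is no substantial obstacle at this step, since all the heavy lifting (the mountain-pass argument producing $u_0, v_0$, the Nehari minimization producing $w_0$ via the Quantitative Deformation Lemma and the Brouwer degree, and the decomposition argument controlling nodal domains) has already been carried out in Theorems \ref{theorem_2_sol}, \ref{theorem_3_sol} and Proposition \ref{proposition_nodal_domains}. The only point to double-check is that the representative of $w_0$ used in Definition \ref{definition_nodal_domains} is well-defined as a Borel function, which follows from $w_0 \in \Lp{\infty}$, and that the two nodal-domain count is sharp: the lower bound (at least two) is immediate from $w_0^+ \neq 0 \neq -w_0^-$, while the upper bound (at most two) is the content of Proposition \ref{proposition_nodal_domains} and is the one place where \eqref{h_6} is genuinely used.
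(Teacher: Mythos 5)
Your proposal is correct and follows exactly the paper's route: Theorem \ref{theorem_nodal_domains} is stated in the paper with a one-line proof that simply combines Theorem \ref{theorem_3_sol} (existence of three bounded solutions, with $w_0$ a sign-changing minimizer of $I|_{\mathcal{N}_0}$) and Proposition \ref{proposition_nodal_domains} (exactly two nodal domains under the additional hypothesis \eqref{h_6}), which is precisely what you do. Your extra remarks on the identification of $\mathcal{A}, \mathcal{B}, \mathcal{C}$ for the $L^\infty$-bound and on where the lower and upper nodal-domain bounds come from are accurate and consistent with the paper's construction.
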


\section*{Acknowledgment}

The first author is member of the Gruppo Nazionale  per l'Analisi Ma\-te\-ma\-ti\-ca, la Probabilit\`{a} e le loro Applicazioni  (GNAMPA) of the Istituto Nazionale di Alta Matematica (INdAM). The second author was funded by the Deutsche Forschungsgemeinschaft (DFG, German Research Foundation) under Germany's Excellence Strategy - The Berlin Mathematics Research Center MATH+ and the Berlin Mathematical School (BMS) (EXC-2046/1, project ID: 390685689). The first author thanks the University of Technology Berlin for the kind hospitality during a research stay in February-May 2023.

%
%
%
%
%


\end{document}